\newtheorem{thm}{Theorem}[section]
\newtheorem{lem}[thm]{Lemma}
\newtheorem{prop}[thm]{Proposition}
\newtheorem{cor}[thm]{Corollary}
\theoremstyle{definition}
\newtheorem{defn}[thm]{Definition}
\newtheorem{remark}[thm]{Remark}
\newtheorem{example}[thm]{Example}
\newtheorem{question}[thm]{Question}
\newcommand {\ZD}{\mathbb{Z}^d}
\newcommand {\ZZ}{\mathbb{Z}}
\newcommand {\NN}{\mathbb{N}}
\newcommand {\RR}{\mathbb{R}}
\newcommand {\GG}{\mathbb{G}}
\newcommand{\HH}{\mathbb{H}}
\newcommand {\Uu}{\mathcal{U}}
\newcommand {\Hom}{\mathit{Hom}}
\newcommand {\GHom}{\mathit{GHom}}
\newcommand {\dprime}{topologically direct prime}
\newcommand{\DPF}{\emph{DPF}}
\newcommand {\Ht}{\mathit{ht}}
\newcommand {\Aut}{\mathit{Aut}}
\title{Direct topological factorization for topological flows}
\author{Tom Meyerovitch}
\address{Tom Meyerovitch\\
Department of Mathematics\\
Ben-Gurion University of the Negev}
\email{mtom@math.bgu.ac.il}
\thanks{The research leading to these results has received funding from the People Programme (Marie Curie Actions) of the European Union's Seventh Framework Programme (FP7/2007-2013) under REA grant agreement no. 333598
 and from the Israel  Science Foundation (grant no. 626/14)}
\keywords{topological dynamics; symbolic dynamics; subshifts; topological entropy; Perron numbers; cocycle}
\subjclass[2000]{37B05,37B10, 37B50, 37B40}
\begin{document}
\maketitle

\begin{abstract}
This paper considers the general question of when a topological  action of a countable group can be factored
into a direct product of a nontrivial actions. In the early 1980's D. Lind considered such questions for $\ZZ$-shifts of finite type.
We study in particular  direct factorizations of subshifts of finite type over $\ZD$ and other groups, and $\ZZ$-subshifts which are not of finite type.
The main results concern direct factors of the multidimensional full $n$-shift, the multidimensional $3$-colored chessboard and the Dyck shift over a prime alphabet.

A direct factorization of an expansive $\GG$-action must be finite, but
a example is provided of a non-expansive $\ZZ$-action for which there is no finite direct prime factorization.
The question about existence of direct prime factorization of expansive actions remains open, even for $\GG=\ZZ$.
\end{abstract}

\section{Introduction}

In this paper we study the notion of ``direct-factorization'' for topological dynamical systems.  Other concepts of ``factorizations'' and ``disjointness''  in topological dynamics and ergodic theory have numerous, diverse and deep applications in mathematics,  going back at least  to  Furstenberg's influential paper  \cite{furstenberg_disjointness67}.

Throughout this paper, $\GG$ will denote a countable group.
By  a $\GG$-\emph{topological dynamical system} or $\GG$-\emph{flow} we mean a pair  $(X,T)$, where $X$ is a Hausdorff compact topological space, and $T$ is an action of the group $\GG$ on  $X$ by homeomorphisms. In other words, the map  $g \mapsto T^g$  is a  homomorphism from $\GG$ to the group of  self-homeomorphisms of $X$.
 A $\GG$-flow $(Y,S)$ is a \emph{factor} of another $\GG$-flow $(X,T)$ if there exists a continuous surjective map $\pi:X \to Y$ which is equivariant, meaning $ S^g \circ \pi = \pi \circ T^g$ for all $g \in \GG$. The map $\pi$ is called a \emph{factor map} or \emph{semi-conjugacy}.  A $\GG$-flow is called \emph{prime} if its only proper factor is the \emph{trivial $\GG$-flow}, which  is the trivial action on a singleton.

A \emph{direct topological factorization} of a $\GG$-flow $(X,T)$ is a \emph{topological conjugacy} or \emph{isomorphism} of the form  $$(X,T) \cong (Y_1 \times \ldots \times  Y_r, S_1 \times \ldots \times S_r).$$
In other words, a direct topological  factorization corresponds to a homeomorphism $\phi:X \to \prod_{i=1}^r Y_i$ so that $\phi (T^g) = (S_1\times \ldots \times S_r)^g \phi(x)$ for all $x \in X$, $g \in \mathbb{G}$.
 We say each $(Y_i,S_i)$ as above is a \emph{direct factor} of $(X,T)$. Call a $\GG$-flow $(X,T)$  \emph{topologically direct prime} if it does not admit a non-trivial direct topological factor.   By a \emph{direct prime factorization} (\DPF) we mean  a direct factorization of $(X,T)$ into direct-prime flows.
Any direct factor of a $\GG$-flow  is indeed a factor.  It follows that any  prime $\GG$-flow is direct prime, but the converse is false.

The existence of a non-trivial factorization for a flow $(X,T)$ immediately implies that the topological space $X$ is homeomorphic to a non-trivial product $Y \times Z$. Thus, for instance any flow on the unit interval $X=[0,1]$ or the one-dimensional sphere $X= \RR/\ZZ$ is direct-prime, for ``purely topological reasons'', having nothing to do with the group action.

Most of our study will concern \emph{symbolic flows} or \emph{subshifts}. These are \emph{expansive} flows on totally disconnected compact metrizable topological spaces. One of the motivations for our choice to focus on symbolic systems is the  attempt to avoid  ``purely topological obstructions''  as above.

 Here is an outline of the rest of the paper:

 In Section \ref{sec:expansive} we consider the following question: Given a $\GG$-flow, is it isomorphic to a finite product of direct-prime $\GG$-flows?
 In the expansive case, the question remains open. An example of a $\ZZ$-action for which there is no finite direct prime factorization is described.

 In Section \ref{sec:SFT_factorization} we consider direct factorizations for \emph{subshifts of finite type} (SFTs). We review results about $\ZZ$-SFTs and discuss factorization for $\ZD$-SFTs, where much less is known. We present a result regarding direct factorization of $\ZD$-full shifts. We also obtain a partial result which provides a weak form of a conjecture of J. Kari (Theorem \ref{thm:full_shift_factorizations}).

 In the remaining sections, we study direct-factorizations for specific systems. In Section \ref{sec:3CB_factorization} we consider specific $\ZD$-subshift of finite types: We prove that the $d$-dimensional $3$-colored chessboard is topologically direct-prime for any $d \ge 1$. In Section \ref{sec:dyck_factorization} we consider \emph{Dyck shifts}. These are  $\ZZ$-subshifts which are not of finite type. In both cases we  establish that the systems are topologically direct prime.
 Our methods involve specific combinatorial and algebraic structure of the systems. To obtain our result on the  $3$-colored chessboards we rely on the cohomology of the system. For the Dyck shifts we rely on lack of intrinsic ergodicity, and the structure of the measures maximal entropy. In both cases we exploit information about periodic points of the system.

\noindent\textbf{Acknowledgment:} I'd like to thank Mike Boyle, Brian Marcus and  Klaus Schmidt for valuable discussions, clarifying both historical and mathematical aspects, and the anonymous referee for valuable suggestions and corrections.

\section{On the existence of finite direct topological  factorizations}
\label{sec:expansive}

A $\GG$-flow $(X,T)$ is  called \emph{expansive} if there exist a finite open cover $\Uu = \{ U_1,\ldots,U_L\}$ so that for any function $F: \GG \to \Uu$  we have $\left|\bigcap_{g \in \GG}T^{-g}\left[ F(g)\right]\right| \le 1$.
Informally, this means that points can be separated by finite-precision measurements along the orbit.
To slightly simplify the proofs, we assume below that the topological  spaces involved are metrizable:
Let  $d$ be a compatible metric on $X$, then $(X,T)$ is expansive if and only if there exists an $\epsilon >0$ with the property
that for every pair $x,y$ of distinct points in $X$ there exists a $g \in \GG$ with $d(T^g(x),T^g(y)) \ge \epsilon$ \cite[Lemma $17.10$]{glasner_book}. Such $\epsilon$ is called an expansive constant for $(X,T)$, with respect to the metric $d$.
We note that the assumption that  $X$ is metrizable is not essential for any of the  results below.

The following question remains open, in particular when $\GG=\ZZ$:

\begin{question}
Is any expansive $\GG$-flow isomorphic to a finite product of direct-prime $\GG$-flows?
\end{question}

The following simple observation is useful for the study of direct factorizations of expansive systems:
\begin{prop}\label{prop:direct_factor_expansive}
Any direct factor of an expansive $\GG$-flow is  expansive.
\end{prop}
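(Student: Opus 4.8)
The plan is to reduce to the case of a two-fold factorization and then observe that an expansive constant for a product is automatically an expansive constant for each coordinate flow. Concretely, suppose $(X,T) \cong (Y_1 \times \cdots \times Y_r, S_1 \times \cdots \times S_r)$; to prove that $(Y_i,S_i)$ is expansive it suffices, after regrouping all remaining coordinates into a single flow $(Z,R) := \left(\prod_{j \ne i} Y_j, \prod_{j \ne i} S_j\right)$, to treat a two-fold product $(X,T) = (Y \times Z, S \times R)$ and show $(Y,S)$ is expansive.

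First I would fix compatible metrics $d_Y$ on $Y$ and $d_Z$ on $Z$ (both $Y$ and $Z$ are compact metrizable, being factors of the compact metrizable space $X$) and equip $X$ with the max metric $d_X\bigl((y,z),(y',z')\bigr) = \max\{d_Y(y,y'),\, d_Z(z,z')\}$, which induces the product topology. Let $\epsilon > 0$ be an expansive constant for $(X,T)$ with respect to $d_X$, as provided by the lemma from \cite{glasner_book} quoted above.

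Next, given distinct points $y, y' \in Y$, I would pick an arbitrary $z \in Z$ and apply expansiveness of $(X,T)$ to the distinct points $(y,z)$ and $(y',z)$: there is some $g \in \GG$ with $d_X\bigl(T^g(y,z),\, T^g(y',z)\bigr) \ge \epsilon$. Since $T^g(y,z) = (S^g y, R^g z)$ and $T^g(y',z) = (S^g y', R^g z)$ agree in their $Z$-coordinate, the max metric collapses and $d_X\bigl(T^g(y,z), T^g(y',z)\bigr) = d_Y(S^g y, S^g y')$, whence $d_Y(S^g y, S^g y') \ge \epsilon$. So $\epsilon$ is an expansive constant for $(Y,S)$, and the metrizable case is done; the symmetric argument handles each $(Y_i, S_i)$.

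There is essentially no hard step here; the only point needing a little care is the remark that metrizability is inessential. For a general compact Hausdorff $X$ one argues with finite generating open covers: starting from a finite generator $\Uu$ for $(X,T)$, use compactness of $Z$ and the product topology to produce finite open covers $\mathcal{Y}$ of $Y$ and $\mathcal{Z}$ of $Z$ whose product grid $\{V \times W : V \in \mathcal{Y},\, W \in \mathcal{Z}\}$ refines $\Uu$, and then verify that $\mathcal{Y}$ is a generator for $(Y,S)$ by the same ``fix a $z$ and lift'' device: for a $\GG$-indexed word $F$ in $\mathcal{Y}$ and any $z \in Z$, pad each letter $F(g)$ with a member of $\mathcal{Z}$ containing $R^g z$ to obtain a word in $\Uu$, and invoke $\bigl|\bigcap_{g}T^{-g}[\,\cdot\,]\bigr| \le 1$. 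The one pitfall worth flagging is that one cannot merely take the coordinate projections $\{\pi_Y(U): U \in \Uu\}$, since $\pi_Y^{-1}(\pi_Y(U))$ is generally far larger than $U$; it is the refinement step that repairs this.
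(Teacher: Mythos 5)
Your proof is correct and takes essentially the same approach as the paper: fix compatible metrics on the factors, combine them into a compatible product metric on $X$ (you use the max metric, the paper uses the sum $d_1+d_2$ --- immaterial, since both collapse to $d_Y$ on pairs whose $Z$-coordinates agree), and apply expansiveness of $(X,T)$ to a pair of points differing only in the $Y$-coordinate. Your additional sketch for the non-metrizable case via refining open covers goes beyond what the paper writes out, but the paper explicitly waives that generality, so it is a bonus rather than a divergence.
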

\begin{proof}
Suppose $(X,T) \cong (Y_1\times Y_2,S_1 \times S_2)$ is expansive.
Let $d_i$ be a compatible metric on $Y_i$.
Identifying $X$ with $Y_1 \times Y_2$,  it follows that $d=d_1+d_2$ is a compatible metric on $X$.
Let $\epsilon>0$ be an expansive constant for $(X,T)$ with respect to the metric $d$.
 fix  distinct points $y_1,\tilde y_1$   in $Y_1$ and $y_2 \in Y_2$, and consider $x = (y_1,y_2) $ and $\tilde x = (\tilde y_1,y_2)$ as points in $X$, whose projections to $Y_2$ coincide.
By expansiveness of $(X,T)$ there exists $g \in \GG$ so that $d(T^g(x),T^g(\tilde x)) > \epsilon$. Now:
$$d\left(T^g(x),T^g(\tilde x)\right) = d_1(S_1^g(y_1),S_1^g(\tilde y_1)) +  d_1(S_2^g(y_2),S_2^g(y_2)) =  d_1(S_1^g(y_1),S_1^g(\tilde y_1)) .$$
It follow that $\epsilon$ is also an expansive constant for $(Y_1,S_1)$ with respect to the metric $d_1$.

\end{proof}

The following remark was kindly brought to the author's attention by the anonymous referee.

\begin{remark}A factor of an expansive $\GG$-flow might not be expansive. For instance, an irrational rotation is a non-expansive factor of the corresponding Sturmian shift, which is expansive.  Even  when  $(Y,S)$ is a factor of $(X,T)$ and the factor map $\pi:X \to Y$ is open, expansivity of $(X,T)$ does not in general imply expansivity of $(Y,S)$. An example for an expansive algebraic action of the free group on two generators admitting a non-expansive open factor via an algebraic map is described \cite[Remark $3.4$]{chung_li_expansive_algebriac_action}. As remarked in \cite{chung_li_expansive_algebriac_action}, such algebraic examples are impossible in the case $\GG= \ZD$ by \cite[Corollary $3.11$]{MR1069512}.
\end{remark}

\begin{lem}\label{lem:direct_factor_expansive}
An infinite product of non-trivial systems is not expansive. Namely,
if $(Y_i,S_i)_{ i \in \mathbb{N}}$ are a sequence of non-trivial flows, then their product $\prod_{i =1}^\infty(Y_i,S_{i})$ is not expansive.
\end{lem}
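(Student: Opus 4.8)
The plan is to argue by contradiction using the metric characterization of expansiveness recalled above. Since each $Y_i$ is compact metrizable, I would first fix for every $i$ a compatible metric $d_i$ on $Y_i$, and (replacing $d_i$ by $\min(d_i,1)$ if necessary) assume $d_i\le 1$. Then the countable product $X=\prod_{i=1}^\infty Y_i$ is again compact metrizable, and the weighted sum
$$d(x,y)=\sum_{i=1}^\infty 2^{-i}\, d_i(x_i,y_i)$$
is a compatible metric inducing the product topology. The product action $T=\prod_i S_i$ acts coordinatewise, i.e. $(T^g x)_i = S_i^g(x_i)$ for every $i$ and $g\in\GG$.

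Now suppose, for contradiction, that $(X,T)$ is expansive, and let $\epsilon>0$ be an expansive constant with respect to $d$. I would choose $N\in\NN$ with $2^{-N}<\epsilon$. Since $(Y_{N+1},S_{N+1})$ is non-trivial, there are two distinct points $a,b\in Y_{N+1}$. Fix an arbitrary $z\in X$ and let $x,y\in X$ be the points agreeing with $z$ in every coordinate $i\ne N+1$, with $x_{N+1}=a$ and $y_{N+1}=b$; then $x\ne y$. Because the action is coordinatewise and $x,y$ differ only in coordinate $N+1$, for every $g\in\GG$ the points $T^g x$ and $T^g y$ also differ only in coordinate $N+1$, whence
$$d\left(T^g x,\, T^g y\right) = 2^{-(N+1)}\, d_{N+1}\!\left(S_{N+1}^g(a),\, S_{N+1}^g(b)\right) \le 2^{-(N+1)} < \epsilon .$$
As this holds for all $g\in\GG$, it contradicts the defining property of the expansive constant $\epsilon$. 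Hence $\prod_{i=1}^\infty (Y_i,S_i)$ cannot be expansive.

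The argument is essentially routine: there is no serious obstacle, and the only points needing a little care are the normalization of the metrics $d_i$ (so the series converges and is controlled by its tail), the observation that the coordinatewise action preserves the set of coordinates on which two points disagree, and the standard fact that a countable product of compact metrizable spaces is compact metrizable. If one wished to avoid the metrizability assumption altogether — in the spirit of the remark that it is inessential — one could instead run the same argument with the open-cover definition: given a putative finite expansive cover $\Uu$, use compactness to refine it to a finite cover by cylinder sets depending only on coordinates $1,\dots,M$ for some $M$, and then take $x\ne y$ differing only in coordinate $M+1$; such $x,y$ lie in exactly the same members of the refined cover along the whole orbit, contradicting expansiveness. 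Given the standing metrizability convention, however, the metric version above is the shortest route.
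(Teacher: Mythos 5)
Your proof is correct and follows essentially the same route as the paper: the same weighted-sum metric $d=\sum_i 2^{-i}d_i$ after normalizing each $d_i$, and the same choice of two distinct points agreeing on all early coordinates so that their orbit stays within $2^{-N}<\epsilon$ of each other, contradicting expansiveness. The extra remarks on the non-metrizable version are a nice bonus but not needed given the paper's standing convention.
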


\begin{proof}
Suppose
$(X,T) = \prod_{i =1}^\infty(Y_i,S_{i})$ is expansive. 
As in the proof of Proposition \ref{prop:direct_factor_expansive}, suppose $d_i$ is a compatible metric on $Y_i$. Since $Y_i$ is compact, the diameter of $Y_i$ with respect to $d_i$ is bounded, so by rescaling the distance $d_i$ we can assume $\sup\left\{ d_i(y,z)~:~ y,z \in Y_i\right\} \le 1$ for all $i \in \mathbb{N}$.
Then  $d= \sum_{i=1}^\infty 2^{-i} d_i$ is a compatible metric on $X$. If $Y_i$ is non-trivial for infinitely many $i$'s then for any $n \in \mathbb{N}$ we can find distinct $x,\tilde x \in X$, with $x= (y_i)_{i \in \mathbb{N}}$ and
$\tilde x= (\tilde y_i)_{i \in \mathbb{N}}$ so that $y_i = \tilde y _i$ for $i \le n$. It follows that $0 < d(T^g(x),T^g(\tilde x)) < 2^{-n}$ for all $g \in \GG$, so $(X,T)$ is not expansive.
\end{proof}

\begin{lem}\label{lem:no_factozation_infinite_product}
If $(X,T)$ does not admit a finite \DPF~ then $(X,T)$ admits a factor which is isomorphic to an infinite product of non-trivial flows.

\end{lem}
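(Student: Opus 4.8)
The plan is to repeatedly split off a non-trivial direct factor and then pass to a limit; the crucial point is that, at each splitting, the property of \emph{not} admitting a finite \DPF~ is inherited by at least one of the two resulting factors, which is exactly what forces the process to continue forever.

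First, I would build a descending sequence of factorizations. Set $(X_0,T_0):=(X,T)$. Given $(X_{n-1},T_{n-1})$ with no finite \DPF, observe that it cannot be direct prime (a direct-prime flow is a one-term \DPF~ of itself), so there is an isomorphism $\psi_n$ from $(X_{n-1},T_{n-1})$ onto a product $(A_n\times X_n,\ \sigma_n\times T_n)$ with both $(A_n,\sigma_n)$ and $(X_n,T_n)$ non-trivial. At least one of these two factors again admits no finite \DPF: otherwise, concatenating a finite \DPF~ of $(A_n,\sigma_n)$ with one of $(X_n,T_n)$ would produce a finite \DPF~ of $(X_{n-1},T_{n-1})$. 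After possibly swapping the two factors I may assume $(X_n,T_n)$ has no finite \DPF, so the recursion proceeds. Composing, I obtain for each $n$ an isomorphism $\Phi_n$ from $(X,T)$ onto $(A_1\times\cdots\times A_n\times X_n,\ \sigma_1\times\cdots\times\sigma_n\times T_n)$, with $\Phi_1=\psi_1$ and $\Phi_n=(\Id_{A_1\times\cdots\times A_{n-1}}\times\psi_n)\circ\Phi_{n-1}$ for $n\ge 2$; in particular $\Phi_n$ refines $\Phi_{n-1}$ only in the last coordinate.

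Then I would assemble the limit factor map. Let $\phi_n\colon X\to A_1\times\cdots\times A_n$ be $\Phi_n$ followed by the projection onto the first $n$ coordinates; this is a continuous equivariant surjection onto the finite-product $\GG$-flow $\prod_{i=1}^n(A_i,\sigma_i)$. The refinement property gives $\pi^{n+1}_n\circ\phi_{n+1}=\phi_n$, where $\pi^{n+1}_n$ forgets the last coordinate, since $\Id_{A_1\times\cdots\times A_n}\times\psi_{n+1}$ acts as the identity on the first $n$ coordinates. Hence the $\phi_n$ determine a single map $\phi\colon X\to\prod_{i=1}^\infty A_i$ — the $i$-th coordinate of $\phi(x)$ being that of $\phi_n(x)$ for any $n\ge i$ — which is continuous (its composition with every coordinate projection is continuous) and equivariant for the product action $\prod_{i=1}^\infty\sigma_i$. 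Finally, $\phi(X)$ is compact, hence closed in the Hausdorff space $\prod_{i=1}^\infty A_i$, and its projection to each finite sub-product $\prod_{i=1}^n A_i$ equals $\phi_n(X)=\prod_{i=1}^n A_i$; a closed subset of a product that surjects onto every finite sub-product must be the whole product, because any basic open set disjoint from it depends on only finitely many coordinates and would therefore already be missed by the corresponding $\phi_n(X)$, which is impossible. So $\phi$ is onto, and $\big(\prod_{i=1}^\infty A_i,\ \prod_{i=1}^\infty\sigma_i\big)$, which is literally an infinite product of non-trivial flows, is a factor of $(X,T)$.

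The main obstacle is the inheritance step: recognizing that ``no finite \DPF'' must descend to one of the two pieces at every splitting, so that the construction never stabilizes. Everything afterwards is the routine packaging of a consistent family of finite-product factor maps into one infinite-product factor map; the only point needing a little care is surjectivity of the limit map $\phi$, handled by the closed-subset-of-a-product argument above, which uses no metrizability assumption.
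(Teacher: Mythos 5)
Your proposal is correct and follows essentially the same route as the paper: the same inductive peeling-off of non-trivial direct factors, with the same key observation that the absence of a finite \DPF~ is inherited by at least one factor at each splitting, followed by assembling the coordinate factor maps into a map onto the infinite product whose surjectivity follows from compactness and density. The only difference is that you spell out the compatibility of the maps $\phi_n$ and the surjectivity argument in more detail than the paper does.
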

\begin{proof}
By induction on $n \in \mathbb{N}$, construct sequences $\{(Y_n,S_n)\}_{n=1}^\infty$ and $\{(X_n,T_n)\}_{n=1}^\infty$ of non-trivial $\GG$-flows
so that $(X_1,T_1)$ and $(X_{n},T_{n}) \cong (Y_n \times X_{n + 1}, S_n \times T_{n+1})$ and $(X_n,T_n)$ does not admit a finite \DPF~.
Indeed, assuming $(X_n,T_n)$ does not admit a \DPF, in particular it is not direct-prime, so it is a product of two non-trivial $\GG$-flows,  at least one of which does not admit a finite \DPF.

Let $\pi_n:X \to Y_n$ be a factor map from $(X,T)$ to $(Y_n,S_n)$. The map $\pi:X \to \prod_{n=1}^\infty Y_n$ is continuous and equivariant, and the image is dense in $\prod_{n=1}^\infty Y_n$. Because $X$ is compact it follows that $\pi(X)=\prod_{n=1}^\infty Y_n$ . So $(\prod_{n=1}^\infty Y_n,\prod_{n=1}^\infty S_n)$ is a factor of $(X,T)$.
\end{proof}

From Lemma \ref{lem:no_factozation_infinite_product} we obtain the following corollary:
\begin{cor}
If $(X,T)$ is a $\GG$-flow such that $X$  is countable set, then $(X,T)$ admits a finite \DPF.
\end{cor}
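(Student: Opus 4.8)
The plan is to invoke Lemma~\ref{lem:no_factozation_infinite_product} in contrapositive form and then reach a contradiction purely on cardinality grounds. Suppose, for contradiction, that the countable $\GG$-flow $(X,T)$ does \emph{not} admit a finite \DPF. Then Lemma~\ref{lem:no_factozation_infinite_product} supplies a sequence $(Y_n,S_n)_{n\in\NN}$ of non-trivial $\GG$-flows together with a factor map $\pi\colon X\to\prod_{n=1}^\infty Y_n$; in particular $\pi$ is surjective.

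The next step is to observe that the target space is uncountable. Since each $(Y_n,S_n)$ is non-trivial, $Y_n$ contains at least two distinct points, so there is an injection $\{0,1\}^{\NN}\hookrightarrow\prod_{n=1}^\infty Y_n$, whence $\bigl|\prod_{n=1}^\infty Y_n\bigr|\ge 2^{\aleph_0}$. On the other hand, $\pi$ is a surjection from the countable set $X$, so $\bigl|\prod_{n=1}^\infty Y_n\bigr|\le |X|\le\aleph_0$. These two inequalities are incompatible, so $(X,T)$ must in fact admit a finite \DPF.

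There is essentially no hard step here: the only points requiring care are that ``non-trivial'' means ``not a singleton'' — so each coordinate of the product contributes a genuine binary choice — and that an infinite product of such spaces therefore has cardinality at least that of the continuum; both are routine. I would also remark that compactness, the topology on $X$, and the continuity of $\pi$ play no role beyond what is already packaged inside Lemma~\ref{lem:no_factozation_infinite_product}: once that lemma is available, the argument is entirely set-theoretic, and it covers the case of finite $X$ as well.
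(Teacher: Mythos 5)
Your proof is correct and is essentially the argument the paper intends: the corollary is stated as an immediate consequence of Lemma~\ref{lem:no_factozation_infinite_product}, and the only missing detail is exactly the cardinality observation you supply (an infinite product of non-trivial spaces has cardinality at least $2^{\aleph_0}$, so it cannot be the surjective image of a countable set).
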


\begin{question}What $\GG$-flows admit a finite \DPF?
\end{question}
Example \ref{exmp:odometer} below is a system which does not admit a finite \DPF~, but is  isomorphic to an infinite product of non-trivial direct-prime systems.
We do not know if being  isomorphic to an infinite product of non-trivial systems precludes the possibility of a  finite \DPF.

We conclude this section by discussing  direct factorizations of group rotations, providing an example for a (non-expansive) system which does not admit a finite \DPF.

\subsection{Direct factorization of group rotations}

A $\ZZ$-dynamical system  $(X,T)$ is called a \emph{group rotation} if $X$ admits a commutative group structure compatible with the given compact topology and $T(x)=x+x_0$ for some $x_0 \in X$. For a group rotations $(X,T)$, if the orbit of some (hence any) $x \in X$ is dense then $(X,T)$  is uniquely ergodic, where the unique invariant measure is Haar measure.

Recall that a $\GG$-action is \emph{equicontinuous} for every $\epsilon >0$ there exists $\delta >0$ such that $d(x_1,x_2) < \delta$ implies $d(T^g(x),T^g(y))< \epsilon$ for every $g \in \GG$.

For a group $\Gamma$ a  direct group factorization is a group isomorphism $\Gamma \cong \Gamma_1 \times \Gamma_2$.
\begin{prop}\label{cor:compact_group_direct_factorization}
The direct topological factorizations of a minimal group rotation $(X,T)$ are in bijection with the direct factorizations of $X$ as a topological group.
\end{prop}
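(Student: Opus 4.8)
The plan is to prove the two inclusions of the claimed bijection separately; one is routine and the other carries all the content. For the easy direction, suppose $X \cong X_1 \times X_2$ is a direct factorization of $X$ as a topological group; writing the translation element under this isomorphism as $x_0 \leftrightarrow (a_1,a_2)$ and letting $S_i$ be the rotation $y \mapsto y + a_i$ on $X_i$, the very same map is then a topological conjugacy from $(X,T)$ to $(X_1 \times X_2, S_1 \times S_2)$, hence a direct topological factorization. This assignment is visibly injective, so the whole statement reduces to showing it is onto: every direct topological factorization of the flow $(X,T)$ arises this way.

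So let $\phi \colon X \to Y_1 \times Y_2$ be a topological conjugacy from $(X,T)$ to $(Y_1\times Y_2,S_1\times S_2)$. First I would record that a minimal group rotation is equicontinuous --- it is an isometry for any translation-invariant compatible metric on the compact metrizable abelian group $X$, which exists by Birkhoff--Kakutani --- and that both minimality and equicontinuity pass to factors. Since the coordinate projections $Y_1 \times Y_2 \to Y_i$ are factor maps, each $(Y_i,S_i)$ is a minimal equicontinuous $\ZZ$-flow. I would then invoke the classical structure theory of equicontinuous flows through the enveloping semigroup (see e.g.\ \cite{glasner_book}): $E(Y_i,S_i)$ is a compact abelian topological group --- abelian because the powers $S_i^n$ form a dense abelian subgroup and multiplication on $E(Y_i,S_i)$ is jointly continuous --- acting transitively on $Y_i$; fixing a base point $y_i^0$ and dividing by its stabilizer $P_i$, a closed and hence normal subgroup, identifies $Y_i$ homeomorphically with the compact abelian group $E(Y_i,S_i)/P_i$ in such a way that $S_i$ becomes a minimal rotation with $y_i^0$ as the group identity. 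In particular $Y_1 \times Y_2$ carries a compact metrizable abelian group structure for which $S_1 \times S_2$ is a rotation, by some element $s$.

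It then remains to upgrade $\phi$ to a topological group isomorphism. Let $0_X$ be the identity of $X$, put $c := \phi(0_X)$, and replace $\phi$ by $\tau_{-c}\circ\phi$, where $\tau_b$ denotes translation by $b$ in $Y_1 \times Y_2$. Since every translation of a group commutes with a rotation of it, $\tau_{-c}\circ\phi$ is again a conjugacy from $(X,T)$ to $(Y_1\times Y_2,S_1\times S_2)$, and it now sends $0_X$ to the identity of $Y_1\times Y_2$. Equivariance then gives $\phi(n x_0) = (S_1\times S_2)^n\phi(0_X) = ns$ for every $n\in\ZZ$. Because $(X,T)$ is minimal, $\{n x_0 : n \in \ZZ\}$ is dense in $X$, so $\{(n x_0, m x_0) : n,m\in\ZZ\}$ is dense in $X\times X$; on this set $\phi$ is additive, while $\{(u,v)\in X\times X : \phi(u+v)=\phi(u)+\phi(v)\}$ is closed. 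Hence $\phi$ is a continuous group homomorphism, and being a homeomorphism it is a topological group isomorphism $X\cong Y_1\times Y_2$, i.e.\ a direct group factorization of $X$. Tracking the two constructions shows they are mutually inverse, modulo the fact that the group structure on a minimal equicontinuous flow is only canonical up to topological group isomorphism: two choices of base point produce structures that differ by a translation, which is itself such an isomorphism.

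The main obstacle is the converse direction, and within it the passage from a dynamical factor to a group: the group structure furnished by the enveloping semigroup is not canonical --- it depends on the chosen base point --- which is precisely why one must first normalize $\phi$ by the translation $\tau_{-c}$ before the density argument can identify it with a homomorphism. The remaining points that deserve a little care are verifying that the enveloping-semigroup construction really produces an \emph{abelian} group and a \emph{minimal} rotation (both follow from the hypotheses, as indicated), and that equicontinuity descends to factors, which is standard.
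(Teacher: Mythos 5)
Your proof is correct and follows essentially the same route as the paper: the paper's (much terser) proof likewise rests on the equivalence of minimal group rotations with minimal equicontinuous flows, the fact that equicontinuity passes to factors, and the citation of \cite[Theorem 1.8]{glasner_book} for the fact that flow isomorphisms of minimal group rotations respect the group structure. You simply fill in the details the paper delegates to the reference (the enveloping-semigroup construction of the group structure on the factors and the density argument showing the normalized conjugacy is a homomorphism), and these details are sound.
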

\begin{proof}
A well-known  characterization of minimal group rotations states that a minimal $\ZZ$-dynamical $(X,T)$ system is a group rotation iff it is equicontinuous. Furthermore, any automorphism of $(X,T)$ as a $\ZZ$-flow respects the group structure on $X$  \cite[Theorem $1.8$]{glasner_book}.
Because a factor of an equicontinuous system is also equicontinuous, it follows that any factor of a minimal group rotation is a minimal group rotation.
\end{proof}

\begin{example}\label{exmp:odometer}
Let $X_{\mathbb{P}}= \prod_{p \in \mathbb{P}} \ZZ / p\ZZ$ where $p$ ranges over all primes $\mathbb{P}$
, and Let $T:X \to X$ be the map
given by $(Tx)_p = x_p +1 \mod p$. This is a minimal compact group rotation,  uniquely ergodic with rational pure-point spectrum. Such systems are often called ``Odometers''.
\end{example}

\begin{prop}\label{prop:primes_odomoter}
The Odometer $(X_{\mathbb{P}},T)$ does not admit a finite direct factorization into direct-primes.
\end{prop}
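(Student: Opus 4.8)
The plan is to reduce everything, via Proposition~\ref{cor:compact_group_direct_factorization}, to a statement about direct factorizations of the topological group $X_{\mathbb{P}}=\prod_{p\in\mathbb{P}}\ZZ/p\ZZ$. Suppose toward a contradiction that $(X_{\mathbb{P}},T)$ admits a finite \DPF, say $(X_{\mathbb{P}},T)\cong\prod_{i=1}^r(G_i,S_i)$ with each $(G_i,S_i)$ direct-prime. Since a factor of a minimal group rotation is again a minimal group rotation (as observed in the proof of Proposition~\ref{cor:compact_group_direct_factorization}), each $(G_i,S_i)$ is a minimal group rotation; applying Proposition~\ref{cor:compact_group_direct_factorization} to $(X_{\mathbb{P}},T)$ and to each $(G_i,S_i)$, the factorization becomes a topological group isomorphism $X_{\mathbb{P}}\cong G_1\times\cdots\times G_r$ in which no $G_i$ admits a non-trivial direct factorization as a topological group. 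So it is enough to show that $\prod_{p\in\mathbb{P}}\ZZ/p\ZZ$ cannot be written as a finite direct product of direct-prime topological groups.

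The main step will be to determine all direct factors of $X_{\mathbb{P}}$. For a prime $q$ the subgroup of $q$-torsion elements $X_{\mathbb{P}}[q]=\{x:qx=0\}$ coincides with the $q$-th coordinate subgroup $\{x:x_p=0 \text{ for all } p\neq q\}$, because $q$ is invertible modulo every prime $p\neq q$; in particular $X_{\mathbb{P}}[q]\cong\ZZ/q\ZZ$. Given a topological group decomposition $X_{\mathbb{P}}=G_1\times\cdots\times G_r$, this yields $X_{\mathbb{P}}[q]=G_1[q]\times\cdots\times G_r[q]$ (writing $G_i[q]$ for the $q$-torsion subgroup of $G_i$), and since $\ZZ/q\ZZ$ has prime order exactly one summand $G_i[q]$ is non-trivial. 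Let $f(q)$ denote that index and put $A_i=f^{-1}(i)$, so $\mathbb{P}=\bigsqcup_{i=1}^r A_i$. Set $Y_i=\prod_{q\in A_i}\ZZ/q\ZZ\le X_{\mathbb{P}}$. Then each coordinate subgroup $X_{\mathbb{P}}[q]$ with $q\in A_i$ is contained in $G_i$, and these subgroups generate a dense subgroup of $Y_i$, so $Y_i\subseteq G_i$ (each $G_i$ being a closed subgroup). Since $X_{\mathbb{P}}=\prod_i Y_i=\prod_i G_i$ with $Y_i\subseteq G_i$ for all $i$, a short argument with the two families of coordinate projections forces $G_i=Y_i$. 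Hence every direct factor of $X_{\mathbb{P}}$ is of the form $\prod_{q\in A}\ZZ/q\ZZ$ for some set $A$ of primes.

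To finish, note that in the hypothetical factorization the sets $A_1,\dots,A_r$ partition the infinite set $\mathbb{P}$ into finitely many pieces, so some $A_i$ is infinite. Splitting such an $A_i$ into two non-empty subsets $B\sqcup C$ produces a non-trivial direct topological group factorization $G_i\cong\left(\prod_{q\in B}\ZZ/q\ZZ\right)\times\left(\prod_{q\in C}\ZZ/q\ZZ\right)$, contradicting the fact that $G_i$ is direct-prime as a topological group. This contradiction shows that no finite \DPF\ exists.

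I expect the structural claim---that the direct factors of $X_{\mathbb{P}}$ are exactly the coordinate sub-products $\prod_{q\in A}\ZZ/q\ZZ$---to be the only non-routine point. The torsion computation locates these factors at the level of subgroups, but one still has to upgrade $Y_i\subseteq G_i$ to $Y_i=G_i$, which is where one uses that $\{Y_i\}$ and $\{G_i\}$ are two internal direct-product decompositions of the same compact group. A cleaner alternative would be to pass to Pontryagin duals: $\widehat{X_{\mathbb{P}}}=\bigoplus_{p\in\mathbb{P}}\ZZ/p\ZZ$ is a torsion abelian group whose $p$-primary component is $\ZZ/p\ZZ$; every direct summand is the direct sum of its primary components, each of which is $0$ or $\ZZ/p\ZZ$, and the same pigeonhole over the primes concludes.
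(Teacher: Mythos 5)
Your proof is correct and follows essentially the same route as the paper: reduce to direct factorizations of the topological group via Proposition~\ref{cor:compact_group_direct_factorization}, use the $q$-torsion subgroups to show every factor must be of the form $\prod_{q\in A}\ZZ/q\ZZ$ for a subset $A$ of the primes, and then pigeonhole over the infinitely many primes to find a non-prime factor. You simply supply more detail (the upgrade from $Y_i\subseteq G_i$ to $Y_i=G_i$) at the step the paper states without proof.
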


\begin{proof}
By Corollary \ref{cor:compact_group_direct_factorization} the direct-topological factorizations of $(X_{\mathbb{P}},T)$ correspond to the direct-group factorizations of the group
$X_{\mathbb{P}}=\prod_{p \in \mathbb{P}} \ZZ / p\ZZ$, but this group does not admit a finite direct group factorization into direct-prime groups.
To see this note that in any direct factorization $X_{\mathbb{P}}= \prod_{k=1}^n Y_k$ of $X_{\mathbb{P}}$, for each prime $p$ there is a unique $k \in \{1,\ldots,n\}$ such that $Y_k$ has non-trivial $p$-torsion, and the $p$-torsion of $Y_k$  is isomorphic to $\ZZ / p\ZZ$. It follows that each $Y_k$ is isomorphic as a group to $\prod_{p \in A_k} \ZZ / p\ZZ$, where $\bigcup_{k=1}^n A_k = \mathbb{P}$, thus at least one direct factor $Y_k$ is not prime.
\end{proof}

\section{Direct factorizations for shifts of finite type}
\label{sec:SFT_factorization}

A \emph{subshift} over a countable group $\GG$ is characterized as an expansive $\GG$-flow $(X,T)$ of a totally-disconnected compact metrizable space $X$. A more concrete description is the following: A $\GG$-system $(X,T)$ is a subshift iff it is isomorphic to a subsystem of $(A^\GG,\sigma)$ where $A$ is a finite set, called ``the alphabet'' or ``spins'', and $\sigma$ is the \emph{shift-action}, given by
$(\sigma^g(x))_h = x_{g^{-1} h}$. 
It is well known and easy to check that any subsystem of  $(A^\GG,\sigma)$ can be specified by a countable set
$\displaystyle \mathcal{F} \subset \bigcup_{ F \Subset \GG} A^F$ of ``forbidden configurations'' as follows:
\begin{equation}\label{eq:subshift_forbidden_words}
X_\mathcal{F} = \left\{ x \in A^\GG ~:~ (\sigma^g x)_F \not \in \mathcal{F} ~,~ \forall g \in \GG ,~ F \Subset \GG\right\},
\end{equation}
where $x_F \in A^F$ denotes the restriction of $x\in A^\GG$ to $F$, and $F \Subset \GG$.

The system $(A^\GG,\sigma)$ is called  the $\GG$-\emph{full-shift} over the alphabet $A$.

\begin{prop}(see  \cite[Section $6$]{lind_entropies_markov_shifts_1984})
Any direct factor of a $\GG$-subshift is a $\GG$-subshift.
\end{prop}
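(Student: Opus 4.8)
The plan is to invoke the intrinsic characterization of subshifts recalled immediately above the statement: a $\GG$-flow is a subshift if and only if it is expansive and its phase space is a totally disconnected compact metrizable space. Writing the given direct factorization (after grouping coordinates) in the form $(X,T) \cong (Y_1 \times Y_2, S_1 \times S_2)$ with $(X,T)$ a subshift, it then suffices to check these three properties for $(Y_1,S_1)$ (the argument for $(Y_2,S_2)$, and hence for each factor in a factorization with more terms, is symmetric).

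Expansiveness of $(Y_1,S_1)$ is precisely Proposition \ref{prop:direct_factor_expansive}, so no new work is needed there. For the topological properties, I would fix a point $y_2 \in Y_2$ and note that $Y_1$ is homeomorphic to the slice $Y_1 \times \{y_2\}$ inside $Y_1 \times Y_2 \cong X$. Since $Y_2$ is Hausdorff, $\{y_2\}$ is closed, so this slice is a closed subset of $X$; it is therefore compact, and as a subspace of a metrizable (respectively totally disconnected) space it is again metrizable (respectively totally disconnected). Transporting these properties across the homeomorphism, $Y_1$ is a totally disconnected compact metrizable space, so $(Y_1,S_1)$ is a subshift.

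There is no real obstacle here; the only point worth keeping in mind is to realize $Y_1$ as a closed \emph{subspace} of $X$ (the slice), rather than as the continuous image of $X$ under the coordinate projection, since quotients do not in general preserve metrizability or total disconnectedness. One could alternatively give a hands-on argument in the spirit of Lind's treatment of the $\ZZ$-case, by pushing a clopen generating partition of $X$ forward along $\phi$ and the projections to produce an explicit conjugacy of $(Y_1,S_1)$ with a subshift, but the characterization-based proof is shorter and applies uniformly to every countable group $\GG$.
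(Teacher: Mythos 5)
Your proof is correct and follows essentially the same route as the paper's: expansiveness via Proposition \ref{prop:direct_factor_expansive}, and the remaining topological properties (compact, metrizable, totally disconnected) via the characterization of subshifts, using the slice $Y_1\times\{y_2\}$. The only (cosmetic) difference is that the paper obtains compactness and metrizability by passing to continuous images while reserving the slice argument for total disconnectedness, whereas you run the slice argument uniformly for all three properties, which is if anything slightly cleaner.
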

\begin{proof}
By Proposition \ref{prop:direct_factor_expansive} any direct factor of an expansive $\GG$-action is expansive.

Both compactness and metrizability pass to continuous images of topological spaces.

A subspace of a totally disconnected space is totally disconnected.
Since $Y$ is homeomorphic to $Y \times \{z_0\}$ for $z_0 \in Z$, it follows that in the case $Y \times Z$ is totally disconnected $Y$ and $Z$ must also be.
Thus, whenever  a totally-disconnected compact metrizable space $X$ is homeomorphic to $Y \times Z$,
$Y$ and $Z$ are also  totally-disconnected, metrizable and compact.
\end{proof}

A $\GG$-subshift is of \emph{finite type} (abbreviated $\GG$-\emph{SFT}) if it is isomorphic to a subsystem $(X_\mathcal{F},\sigma)$ of the form \eqref{eq:subshift_forbidden_words} with $|\mathcal{F}|< \infty$. Equivalently, a $\GG$-SFT is a $\GG$-subshift which is not isomorphic a strictly decreasing countable intersection of subshifts.

We record the following observation:
\begin{prop}\label{prop:SFT_direct_factor}
Any direct factor of a $\GG$-SFT is a $\GG$-SFT.
\end{prop}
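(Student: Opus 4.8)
The plan is to combine the previous proposition (direct factors of subshifts are subshifts) with a finiteness-of-type argument exploiting a key structural feature of SFTs over arbitrary countable groups: a $\GG$-subshift $(X,\sigma)$ is of finite type if and only if it is not isomorphic to a strictly decreasing countable intersection of subshifts (this equivalent formulation is stated just before Proposition \ref{prop:SFT_direct_factor}, so I may use it freely). So suppose $(X,T)$ is a $\GG$-SFT and $(X,T)\cong(Y\times Z, S\times R)$. By Proposition \ref{prop:SFT_direct_factor}'s predecessor, both $(Y,S)$ and $(Z,R)$ are $\GG$-subshifts; I want to show $(Y,S)$ is in fact a $\GG$-SFT.

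First I would set up the combinatorial picture concretely. Realize $X\subseteq A^\GG$ as an SFT defined by a finite set $\mathcal F\subseteq A^{F_0}$ of forbidden patterns on a single finite window $F_0\Subset\GG$ (one can always enlarge the window so all forbidden patterns share the same shape, and by enlarging further assume $F_0$ contains the identity). Realize $Y\subseteq B^\GG$ and $Z\subseteq C^\GG$ as subshifts, and let $\phi:X\to Y\times Z$ be the conjugacy. The heart of the matter is that a conjugacy between subshifts is always given by a sliding block code with some finite memory/anticipation window $W\Subset\GG$ in each direction; in particular the first coordinate $\phi_1:X\to Y$ of $\phi$ and its inverse's coordinates are sliding block codes. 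Now suppose toward a contradiction that $(Y,S)$ is not an SFT. Then, using the characterization above, $Y=\bigcap_{n}Y_n$ for a strictly decreasing sequence of subshifts $Y_n\supsetneq Y_{n+1}$, where $Y_n$ is defined by forbidding those patterns of $Y$ that fail to extend; equivalently, there exist patterns $p_n$ on finite windows $D_n\Subset\GG$ (with $D_n$ increasing to $\GG$) such that $p_n$ does not appear in $Y$ but $p_n$ restricted to $D_{n-1}$ does appear in $Y$ — i.e. $Y$ has ``obstructions at arbitrarily large range.''

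The contradiction should come from pulling these obstructions back through the conjugacy. Given such a $Y$-pattern $p_n$ on $D_n$ which is ``locally admissible up to scale $n$ but globally forbidden,'' I would pick any $z\in Z$, use the sliding block code $\phi^{-1}$ to attempt to build an $x\in A^\GG$ whose image's first coordinate agrees with $p_n$ on a large window: since $\phi^{-1}$ has bounded window $W$, the pattern $p_n$ together with a patch of $z$ determines a pattern of $x$ on a window only slightly larger than $D_n$, and local admissibility of $p_n$ at scale $n\gg|W|,|F_0|$ forces this $x$-pattern to contain no forbidden $\mathcal F$-pattern. Hence this $x$-pattern extends to a point of the SFT $X$ (this is exactly the defining ``local-to-global'' property of an SFT: every locally admissible pattern extends), and applying $\phi$ to that point produces a point of $Y\times Z$ whose first coordinate contains $p_n$ on $D_{n}$ minus a bounded collar — so (taking $n$ large enough that $D_n$ minus the collar still contains the original globally-forbidden sub-pattern of $p_n$) a forbidden pattern of $Y$ appears in a point of $Y$, which is absurd. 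Therefore $Y$ has obstructions only up to bounded range, so $Y$ is an SFT; symmetrically so is $Z$, and a standard induction extends this to a product of $r$ factors.

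The main obstacle I expect is making the bookkeeping of windows airtight: one must track how the finite memory window $W$ of $\phi^{-1}$ (and $W'$ of $\phi$) interacts with the window $F_0$ of $\mathcal F$ and with the "collar" lost when passing a pattern through a sliding block code, and verify that "locally admissible for $X$" really is witnessed on a window small enough relative to $D_n$. Equivalently, one should phrase the argument cleanly in terms of the abstract characterization "SFT $=$ not a strictly decreasing countable intersection of subshifts": a conjugacy carries a decreasing sequence of subshifts realizing $Y$ to a decreasing sequence of subshifts realizing $X$ (via $\phi^{-1}(\cdot)\times Z$ and intersecting with $X$), and since $X$ is an SFT that sequence must stabilize, forcing the original sequence for $Y$ to stabilize too — this avoids explicit windows altogether and is probably the cleanest route, with the only thing to check being that $\phi^{-1}(Y_n\times Z)$ is genuinely a subshift and that the correspondence is strictly monotone.
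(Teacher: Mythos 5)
Your closing paragraph is, in essence, the paper's proof, and it is the route you should take: if $Y$ is not an SFT, write $Y=\bigcap_{n}Y_n$ as a strictly decreasing countable intersection of subshifts; then $Y\times Z=\bigcap_{n}\left(Y_n\times Z\right)$ exhibits $Y\times Z$ as a strictly decreasing countable intersection of subshifts (strictness uses only $Z\neq\emptyset$), and since the characterization ``SFT $=$ not \emph{isomorphic to} a strictly decreasing countable intersection of subshifts'' is isomorphism-invariant, $X\cong Y\times Z$ is not an SFT. Note that no pullback through the conjugacy is needed, and indeed $\phi^{-1}(Y_n\times Z)$ does not literally make sense: $\phi^{-1}$ is defined only on $Y\times Z$, which is strictly contained in $Y_n\times Z$, so ``intersecting with $X$'' collapses everything to $X$ itself. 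The isomorphism-invariance of the characterization does all the work, and the paper's proof is exactly these two lines.

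The first and main argument you present has a genuine gap: the step asserting that ``every locally admissible pattern extends to a point of the SFT $X$'' is false, and it is not the defining property of an SFT. A pattern containing no forbidden $\mathcal{F}$-subpattern need not occur in any point of $X_{\mathcal{F}}$, already for $\GG=\ZZ$ (over the alphabet $\{a,b\}$ forbid $ba$ and $bb$: the word $ab$ is locally admissible but appears in no bi-infinite point), and for $\ZD$ with $d\ge 2$ the gap between locally admissible and globally admissible patterns persists on arbitrarily large windows no matter how the SFT is presented --- this is essentially the content of the undecidability of the extension/domino problems. Consequently the pulled-back $x$-pattern you construct, even if it avoids $\mathcal{F}$, may fail to extend to a point of $X$, and the contradiction never materializes. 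There are secondary issues as well (the sliding block code $\phi^{-1}$ is only defined on patterns that actually occur in $Y\times Z$, whereas $p_n$ is by hypothesis not a $Y$-pattern), but the local-to-global claim is the fatal one. Discard the window-bookkeeping route and keep only the abstract argument you end with.
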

\begin{proof}
Suppose $X \cong Y \times Z$  and $Z$ is not an SFT. Then there exist subshifts $$\ldots \subset Z_n \subset Z_{n-1} \subset \ldots \subset Z_1$$
such that $Z = \bigcap_{n=1}^\infty Z_n$ and
each inclusion is strict.
It follows that
$$Y \times Z = \bigcap_{n=1}^\infty \left( Y \times Z_n\right),$$ so $X$ is not an SFT.
\end{proof}

\begin{remark} A proof of Proposition  \ref{prop:SFT_direct_factor}  above for the particular case $\GG= \ZZ$ appears in  \cite[Section $6$]{lind_entropies_markov_shifts_1984}, using an argument involving ''canonical coordinates'' in the sense of Bowen \cite{bowen_top_entropy_axim_A}. It is not clear if there is a  meaningful extension of this notion for subshifts over general groups.
\end{remark}

\begin{remark}
 A subshift factor of an SFT is called a \emph{sofic shift}, and is not in general an SFT. When $\GG=\ZZ$ there are several useful equivalent conditions for a subshift to be sofic. For a general group  $\GG$ (or even $\GG=\ZD$ with $d>1$), sofic shifts are
generally not so well understood.
\end{remark}

Direct factorizations for  $\ZZ$-SFTs were considered and studied in \cite{lind_SFT_factorization_1983} and \cite{lind_entropies_markov_shifts_1984}. For $\ZZ$-SFTs,
it turns out that direct factorizations are intimately related a numerical invariant called the \emph{topological entropy}.

Recall that  a countable group $\GG$ is called \emph{amenable} if there exists a sequence $F_1,F_2 ,\ldots,F_n,\ldots \subset \GG$ of finite sets satisfying $\lim_{n \to \infty}\frac{|gF_n \bigtriangleup F_n|}{|F_n|}=0$ for all $g \in \GG$. A sequence as above is called a
\emph{F\"{o}lner sequence} in $\GG$.
This is one of many equivalent definitions for amenability.

The topological entropy of a $\GG$-subshift $(X,T)$ over an amenable group $\GG$ is given by
\begin{equation}\label{eq:amenable_subshift_entropy}
h(X,T) = \lim_{n \to \infty} \frac{\log \left| \left\{ x_{F_n}~:~ x \in X\right\}\right| }{|F_n|},
\end{equation}
where $(F_n)_{n=1}^\infty$  is a F\"{o}lner sequence.
When $\GG=\ZD$, balls with radius increasing to infinity form a  F\"{o}lner sequence. for instance:
$$F_n= \left\{ v \in \ZD ~:~ \|v\|_{\infty} \le  n \right\}.$$
The limit in \eqref{eq:amenable_subshift_entropy} is equal to the infimum of the sequence inside the limit and does not depend on the particular choice of F\"{o}lner sequence \cite[Theorem $4.9$]{MR2052281}. 
 More importantly, the topological entropy of $(X,T)$ is invariant under isomorphism, and does not depend on the representation of $(X,T)$.

In \cite{lind_SFT_factorization_1983} D. Lind  formulated  a characterization of the numbers which can be realized as entropies of $\ZZ$-SFT's and of topologically mixing (or aperiodic) SFTs.
Following \cite{lind_SFT_factorization_1983}, an algebraic integer $\lambda \in \RR_+$ is called a \emph{Perron number} if $\lambda$ is greater than the absolute value of any one of its algebraic conjugates. 

\begin{thm}\label{thm:lind_Z_SFT_entropy_perron} (Lind, \cite[Theorem $1$]{lind_SFT_factorization_1983})
For any Perron number $\lambda$ there exists a topologically mixing $\ZZ$-SFT $(X,T)$ such that $h(X,T)=\log(\lambda)$. Conversely, the topological entropy of any  mixing $\ZZ$-SFT $(X,T)$ is of the form $\log(\lambda)$ for a Perron number.
\end{thm}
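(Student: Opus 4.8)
\section*{Proof proposal for Theorem \ref{thm:lind_Z_SFT_entropy_perron}}

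The plan is to prove the two directions separately. For the \emph{entropy constraint} direction I would first pass to a standard matrix model: after a higher-block recoding, every $\ZZ$-SFT is conjugate to a topological Markov chain given by a $0/1$ transition matrix $A$ (equivalently, to the edge shift of a nonnegative integer matrix), and $(X,T)$ being mixing translates exactly into $A$ being \emph{primitive}, i.e.\ irreducible and aperiodic. Counting admissible blocks of length $n$ then gives $h(X,T)=\log\lambda_A$, where $\lambda_A$ is the spectral radius of $A$. Since $A$ has integer entries, $\lambda_A$ is an algebraic integer (a root of the monic integer characteristic polynomial $\chi_A$), and the minimal polynomial of $\lambda_A$ over $\QQ$ divides $\chi_A$, so every Galois conjugate of $\lambda_A$ is again an eigenvalue of $A$. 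By the Perron--Frobenius theorem $\lambda_A$ is a simple eigenvalue strictly dominating the modulus of every other eigenvalue, hence strictly dominating each of its conjugates; that is, $\lambda_A$ is a Perron number. (If $X$ is a single fixed point then $\lambda_A=1$, which is Perron.)

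For the \emph{realization} direction I would proceed in three steps. \textbf{Step 1 (reduction to a combinatorial model).} It suffices to exhibit a primitive nonnegative integer matrix with spectral radius exactly $\lambda$: its edge shift is then a mixing $\ZZ$-SFT of entropy $\log\lambda$. Via companion matrices this reduces to a purely algebraic assertion: there exist $n\ge 1$ and nonnegative integers $b_1,\dots,b_n$ with $b_n\ge 1$ and $\gcd\{\,i:b_i>0\,\}=1$ such that
$$\lambda^n=b_1\lambda^{n-1}+b_2\lambda^{n-2}+\cdots+b_n,$$
because the companion matrix of $f(t)=t^n-b_1t^{n-1}-\cdots-b_n$ has nonnegative integer entries, is irreducible precisely when $b_n\ge 1$, and is aperiodic precisely when $\gcd\{\,i:b_i>0\,\}=1$; moreover such an $f$ has a unique positive real root, which is automatically its root of largest modulus, so if $\lambda$ is \emph{any} positive root of $f$ then $\lambda$ coincides with that dominant root (hence with the spectral radius of the companion matrix). \textbf{Step 2 (finding the relation).} Let $m$ be the minimal polynomial of $\lambda$, with conjugates $\lambda=\lambda_1,\lambda_2,\dots,\lambda_d$, where $|\lambda_j|<\lambda$ for $j\ge 2$ since $\lambda$ is Perron. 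By Gauss's lemma any $f$ as in Step 1 with $f(\lambda)=0$ is automatically a monic integer multiple $f=m\,h$ of $m$, so the task is to produce a monic integer multiple of $m$ of ``nonnegative companion'' shape. Here one exploits the strict domination of $\lambda$: working with the integer linearly recurrent sequence $s_k$ attached to $m$ (equivalently with the powers of the companion matrix of $m$, whose Perron eigenvector $(1,\lambda,\dots,\lambda^{d-1})$ is strictly positive), strict domination forces $s_k\sim \lambda^k/m'(\lambda)>0$, hence eventual positivity, from which the nonnegative coefficients are extracted. \textbf{Step 3 (mixing).} The $\gcd$ condition, which can always be arranged (if necessary by a harmless modification of the relation), yields aperiodicity, hence the SFT is mixing, and $\lambda$, being the spectral radius, gives entropy $\log\lambda$; the degenerate case $\lambda=1$ is realized by the trivial one-point system.

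The main obstacle is Step 2: Lind's lemma that every Perron number is the dominant root of a monic integer polynomial of nonnegative companion type, equivalently the spectral radius of a primitive nonnegative integer matrix. Eventual positivity of the recurrent sequence $s_k$ is easy, but converting it into genuine nonnegative \emph{integer} coefficients $b_i$ is delicate, because the left Perron eigenvector of the companion matrix of $m$ need not be positive, so one cannot simply argue that a high power of that matrix is positive. Instead one must build, essentially by hand, a finite directed graph --- with a controlled set of loop lengths, so as to secure both irreducibility and the $\gcd=1$ aperiodicity required for \emph{mixing} --- whose adjacency matrix has $\lambda$ as its Perron eigenvalue. This combinatorial construction, rather than the Perron--Frobenius bookkeeping, is the heart of the theorem.
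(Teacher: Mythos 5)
The paper does not actually prove this statement: it is imported verbatim from Lind with a citation, so there is no internal argument to compare against, and I will assess your proposal against Lind's own proof. Your first direction (entropy of a mixing $\ZZ$-SFT is $\log$ of a Perron number) is correct and is the standard argument: recode to the edge shift of a primitive integer matrix $A$, identify $h$ with $\log\lambda_A$, note that the minimal polynomial of $\lambda_A$ divides $\chi_A$ so every Galois conjugate is an eigenvalue of $A$, and invoke Perron--Frobenius for strict domination.

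The realization direction, however, contains a fatal structural error, over and above the fact that you explicitly leave the key lemma of Step 2 unproved. Step 1 reduces the problem to showing that every Perron number $\lambda$ satisfies $\lambda^n=b_1\lambda^{n-1}+\cdots+b_n$ for some nonnegative integers $b_i$, i.e.\ that some monic integer multiple of its minimal polynomial has all non-leading coefficients $\le 0$. This is false. Take $\lambda=(3+\sqrt5)/2$, a root of $x^2-3x+1$: it is Perron (its conjugate is $\lambda'=\lambda^{-1}=(3-\sqrt5)/2<1<\lambda$, since the constant term is $1$), and the theorem does hold for it, as it is the spectral radius of the primitive matrix $\left(\begin{smallmatrix}2&1\\1&1\end{smallmatrix}\right)$. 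But if $\lambda^n=\sum_{i=1}^n b_i\lambda^{n-i}$ with $b_i\ge 0$ integers, then $1=\sum_{i=1}^n b_i\lambda^{-i}$; applying the nontrivial automorphism of $\QQ(\sqrt5)$ and using $(\lambda')^{-1}=\lambda$ gives $1=\sum_{i=1}^n b_i\lambda^{i}$, whose right-hand side is either $0$ or at least $\lambda>2$. So \emph{no} polynomial of your companion shape vanishes at this $\lambda$, and Steps 1--3 cannot be completed. The obstruction is not, as your final paragraph suggests, the delicacy of extracting integer coefficients from an eventually positive recurrence; it is that the target of the reduction does not exist.

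Lind's actual construction avoids companion matrices altogether. One takes the companion matrix $M$ of the \emph{minimal} polynomial of $\lambda$ acting on $\ZZ^d$, uses the strict domination $|\lambda_j|<\lambda$ to find an open cone $C$ around the strictly positive right eigenvector $(1,\lambda,\ldots,\lambda^{d-1})$ on which the action of $M$ is eventually contracting toward the eigendirection, and then selects finitely many integer vectors $v_1,\ldots,v_N$ deep inside $C$ so that each $Mv_i$ can be written as $\sum_j a_{ij}v_j$ with $a_{ij}$ nonnegative integers. The matrix $A=(a_{ij})$ is then a nonnegative integer matrix admitting the strictly positive eigenvector $\bigl(\langle l,v_j\rangle\bigr)_j$ for the eigenvalue $\lambda$ (with $l$ a left Perron eigenvector of $M$), which forces $\lambda$ to be its spectral radius; primitivity is arranged by an auxiliary modification. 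Your closing remark that one must "build a finite directed graph by hand" points in the right direction, but the graph produced is genuinely not of single-vertex loop (companion) type, and the cone argument that produces it is the missing heart of the proof.
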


For $\ZZ$-SFTs which are not necessarily  topologically mixing, the class of entropy numbers consists of logarithms of $n$-th roots of Perron numbers.

A Perron number $\lambda$ is called \emph{irreducible} if it impossible to write $\lambda=\alpha \beta$ with $\alpha,\beta >1$ Perron numbers.

\begin{thm}
(Lind  \cite[Theorem $4$ ]{lind_entropies_markov_shifts_1984})
Any perron number admits a finite number of factorizations into a finite number of irreducible Perron numbers. There are only a finite number of such, but factorization is not always unique.
\end{thm}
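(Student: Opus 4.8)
The plan is to prove three separate assertions: (i) every Perron number $\lambda>1$ can be written as a product of finitely many irreducible Perron numbers; (ii) the collection of such factorizations is finite; and (iii) it need not consist of a single one. The engine behind (i) and (ii) is a Galois-theoretic lemma which forces \emph{every} Perron factor of $\lambda$ to lie already in the one fixed number field $\QQ(\lambda)$. Once this is granted, the rest is a Northcott-style counting argument, and I expect that lemma to be the crux: it is precisely what prevents the degrees of the factors from growing without control, which is the only way an infinite descent of factorizations could fail to terminate.

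I would begin with the classical multiplicativity statement: the product of two Perron numbers is again Perron. If $L$ is the Galois closure of $\QQ(\alpha,\beta)$ over $\QQ$, with group $G$, then every conjugate of $\alpha\beta$ has the form $g(\alpha)g(\beta)$ for some $g\in G$; since $\alpha$ and $\beta$ are Perron one has $|g(\alpha)|\le\alpha$ and $|g(\beta)|\le\beta$, and equality in either inequality forces $g$ to fix the corresponding factor. Hence $|g(\alpha)g(\beta)|\le\alpha\beta$, with equality only when $g(\alpha\beta)=\alpha\beta$; thus the positive algebraic integer $\alpha\beta$ strictly dominates all its other conjugates. The key lemma is obtained from exactly the same computation: if $\lambda=\alpha\beta$ with $\lambda,\alpha,\beta$ Perron, take $L$ and $G$ as above and let $H\le G$ be the stabilizer of $\lambda$, so that $\QQ(\lambda)=L^H$. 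For $g\in H$ we have $g(\alpha)g(\beta)=g(\lambda)=\lambda=\alpha\beta$, which together with $|g(\alpha)|\le\alpha$ and $|g(\beta)|\le\beta$ forces $|g(\alpha)|=\alpha$, hence $g(\alpha)=\alpha$, and likewise $g(\beta)=\beta$. Therefore $\alpha$ and $\beta$ are fixed by $H$, that is, $\alpha,\beta\in L^H=\QQ(\lambda)$.

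Granting the lemma, set $K=\QQ(\lambda)$ and $n=[K:\QQ]$, and let $S$ be the set of Perron numbers $\mu$ with $1<\mu\le\lambda$ that lie in $K$. Each $\mu\in S$ is an algebraic integer of degree at most $n$ all of whose conjugates have absolute value at most $\lambda$ (being Perron, $\mu$ is its own conjugate of largest modulus, and $\mu\le\lambda$), so the integer coefficients of its minimal polynomial are bounded in terms of $n$ and $\lambda$; hence $S$ is finite and $c:=\min S>1$. Now consider any decomposition $\lambda=\mu_1\cdots\mu_k$ with $k\ge2$ into Perron numbers greater than $1$. By multiplicativity each partial product $\prod_{j\ne i}\mu_j$ is Perron, so the key lemma applied to $\lambda=\mu_i\cdot\prod_{j\ne i}\mu_j$ gives $\mu_i\in K$, and plainly $1<\mu_i\le\lambda$, so $\mu_i\in S$ and $\mu_i\ge c$. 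Hence $c^k\le\mu_1\cdots\mu_k=\lambda$, so $k\le\log\lambda/\log c$. It follows that any chain of successive refinements of a factorization terminates, so a factorization of maximal length exists and consists of irreducible Perron numbers, proving (i); and since every factor belongs to the finite set $S$ and no factorization has more than $\lfloor\log\lambda/\log c\rfloor$ of them, only finitely many multisets of factors can arise, proving (ii).

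For the non-uniqueness in (iii), it suffices to exhibit one explicit Perron number admitting two essentially different factorizations into irreducible Perron numbers; given the example this is a finite verification, so the difficulty there lies entirely in the construction. Such examples already occur in number fields of small degree, where the multiplicative monoid of Perron numbers — indeed of Perron units — need not be free.
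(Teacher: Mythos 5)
This theorem is stated in the paper without proof --- it is imported verbatim from Lind's 1984 paper --- so there is no internal argument to compare against; the comparison is necessarily with Lind's original proof. Your treatment of existence and finiteness is correct and is essentially Lind's: the Galois-theoretic lemma that any Perron factorization $\lambda=\alpha\beta$ forces $\alpha,\beta\in\QQ(\lambda)$ is precisely the pivot of Lind's argument, and your Northcott-style count of the Perron numbers lying in $(1,\lambda]\cap\QQ(\lambda)$ correctly bounds both the set of possible factors and the length of any factorization, which yields termination of successive refinements (hence existence of a factorization into irreducibles) and finiteness of the collection of factorizations. The points one might probe (that $|g(\alpha)|=\alpha$ forces $g(\alpha)=\alpha$ because a Perron number strictly dominates only its \emph{other} conjugates; that the complementary product $\prod_{j\neq i}\mu_j$ is Perron, by induction from the two-factor multiplicativity; that the stabilizer of $\lambda$ is exactly $\mathrm{Gal}(L/\QQ(\lambda))$) are all handled or immediate.

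The genuine gap is the third assertion of the theorem: that factorization into irreducible Perron numbers is \emph{not always unique}. This is an existence claim and requires an explicit witness --- a Perron number $\lambda$ together with two distinct multisets of irreducible Perron numbers having product $\lambda$, with the irreducibility of each factor verified (a finite check inside $\QQ(\lambda)$ by your own finiteness argument, but one that must actually be performed). You reduce the claim to ``exhibit one explicit example'' and then stop, remarking only that such examples occur in small degree because the multiplicative monoid of Perron numbers is not free; that remark is a restatement of what is to be proved, not a proof of it. Lind's paper does supply a concrete example. As written, your proposal establishes existence and finiteness of factorizations but leaves the non-uniqueness clause of the theorem unproven.
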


It follows that  a mixing $\ZZ$-SFT $(X,T)$ with $h(X,T)=\log(\lambda)$ with $\lambda$ an irreducible Perron number is direct-prime. 
There are additional obstructions to factorization of $\ZZ$-SFTs. For instance, a non trivial direct factorization can be detected by the dimension-module, which is a certain ordered abelian group along with an order preserving automorphism \cite{boyle_marcus_trow_resolving_maps}. 

For $\ZD$ with $d \ge 2$, there is no analogous condition on the entropy of $h(X,T)$ of an SFT which guarantees $(X,T)$ is direct-prime: The class of numbers which occur as the topological entropy for a $\ZD$-SFT is the class of non-negative right recursively enumerable numbers \cite{hochman_meyerovitch_entropies_2010}.

\begin{question} Does every $\ZD$-SFT admit only a finite number of \DPF s?
\end{question}

For $n \in \mathbb{N}$ we denote the first $n$ positive integers by
$$[n] = \{1,\ldots,n\}.$$

For $d=1$ the following result appears as Theorem $7$ of \cite{lind_entropies_markov_shifts_1984}:
\begin{thm}\label{thm:full_shift_direct_prime}
For any $d \ge 1$, the  full-shift $([n]^{\ZD},\sigma)$ is direct prime iff $n$ is a prime number.
\end{thm}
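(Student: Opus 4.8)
The plan is to prove the two directions separately. For the easy direction, suppose $n = ab$ with $a, b \geq 2$. Then there is an obvious topological conjugacy $[n]^{\ZD} \cong ([a] \times [b])^{\ZD} \cong [a]^{\ZD} \times [b]^{\ZD}$, obtained by fixing a bijection $[n] \to [a] \times [b]$ and applying it coordinatewise; this is manifestly shift-equivariant and a homeomorphism, so $([n]^{\ZD},\sigma)$ is not direct prime. Hence direct primeness forces $n$ to be prime.

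For the converse, suppose $n = p$ is prime and assume for contradiction that $([p]^{\ZD},\sigma) \cong (Y_1 \times Y_2, S_1 \times S_2)$ with both $Y_i$ nontrivial. The first key step is a counting/entropy observation: the topological entropy is additive under direct products, so $h(Y_1) + h(Y_2) = h([p]^{\ZD}) = \log p$ (with the appropriate normalization for $\ZD$), while by Proposition \ref{prop:SFT_direct_factor} each $Y_i$ is a nontrivial $\ZD$-SFT, hence has at least $2$ points and positive entropy is not immediately forced — so entropy alone will not suffice and I expect the real argument to run through periodic points. The second, main step is therefore to exploit periodic-point counts. For a finite-index subgroup $\Lambda \leq \ZD$, the number of $\Lambda$-periodic points of a direct product is the product of the periodic-point counts of the factors: $|\mathrm{Fix}_\Lambda(Y_1 \times Y_2)| = |\mathrm{Fix}_\Lambda(Y_1)| \cdot |\mathrm{Fix}_\Lambda(Y_2)|$. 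For the full shift, $|\mathrm{Fix}_\Lambda([p]^{\ZD})| = p^{[\ZD : \Lambda]}$. The plan is to choose $\Lambda$ cleverly — e.g. take $\Lambda$ of prime index $q \neq p$, so that $[\ZD:\Lambda] = q$ and $|\mathrm{Fix}_\Lambda([p]^{\ZD})| = p^{q}$ — and then argue that writing $p^q = m_1(\Lambda) m_2(\Lambda)$ forces one factor to have exactly one $\Lambda$-periodic point for infinitely many choices of $\Lambda$, which combined with expansiveness should force that factor to be trivial.

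More precisely, the third step is to make the "one fixed point forces trivial" implication uniform. Since each $Y_i$ is an expansive $\ZD$-subshift, if $Y_i$ has a unique $\Lambda$-periodic point for a cofinal family of finite-index subgroups $\Lambda$ (and periodic points are dense in a full shift, a property one would want to track through the conjugacy — this requires care, as density of periodic points is a conjugacy invariant and holds for $[p]^{\ZD}$), then $Y_i$ is a single point. The cleanest route: for each $\Lambda$ of index $k$, we get $m_1(\Lambda) m_2(\Lambda) = p^k$, so $m_i(\Lambda)$ is a power of $p$ for each $i$; write $m_i(\Lambda) = p^{e_i(\Lambda)}$ with $e_1(\Lambda) + e_2(\Lambda) = k$. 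One then shows $e_i$ is "additive" enough over nested subgroups that the density $\lim_k e_i(\Lambda_k)/k$ exists and is a fixed rational, and that $e_i(\Lambda) \geq 1$ whenever $Y_i$ is nontrivial (any nontrivial SFT has at least two periodic points over a sufficiently sparse $\Lambda$). The contradiction then comes from pinning down that $e_i(\Lambda)$ cannot interpolate correctly between different $\Lambda$'s unless one $e_i$ is identically zero — exploiting that $p$ is prime so $p^k$ has no nontrivial "balanced" factorizations compatible with all the subgroup inclusions simultaneously.

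The step I expect to be the main obstacle is the last one: upgrading the per-$\Lambda$ divisibility constraints into a single global contradiction. Each individual equation $m_1(\Lambda)m_2(\Lambda) = p^{[\ZD:\Lambda]}$ only says the split is into prime powers, and a priori the exponents could vary with $\Lambda$ in a way that never isolates a trivial factor; ruling this out rigorously requires controlling how $\mathrm{Fix}_\Lambda(Y_i)$ behaves under passing to finite-index subgroups (a monotonicity: $\Lambda' \leq \Lambda$ implies $\mathrm{Fix}_\Lambda(Y_i) \subseteq \mathrm{Fix}_{\Lambda'}(Y_i)$, giving $m_i(\Lambda) \mid$-ish relations) and combining this with expansiveness (which bounds how slowly periodic points can grow if the system is nontrivial). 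For $d = 1$ this is exactly Lind's argument via the zeta function / Perron number machinery (Theorem \ref{thm:lind_Z_SFT_entropy_perron} and the irreducibility of $p$ as a Perron number), and the work is to find a substitute for the $d \geq 2$ case where no zeta-function factorization theory is available — likely by the direct periodic-point bookkeeping sketched above rather than any entropy-rigidity statement.
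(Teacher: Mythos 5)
Your easy direction and your general strategy (periodic-point counting rather than entropy or zeta functions) are the right ones, and your reduction is correctly set up: writing $m_i(\Lambda)=|\mathrm{Fix}_\Lambda(Y_i)|$, the identity $m_1(\Lambda)m_2(\Lambda)=p^{[\ZD:\Lambda]}$ forces each $m_i(\Lambda)$ to be a power of $p$, and a factor with $m_i(\Lambda)=1$ for all finite-index $\Lambda$ is trivial because periodic points are dense in any direct factor of the full shift. But the step you flag as ``the main obstacle'' is a genuine gap, and it is exactly where all the work lies. The per-$\Lambda$ divisibility constraints, even together with the monotonicity $\mathrm{Fix}_\Lambda(Y_i)\subseteq\mathrm{Fix}_{\Lambda'}(Y_i)$ for $\Lambda'\le\Lambda$ and with the existence of a limiting exponent density, are perfectly consistent with \emph{both} factors being nontrivial: for instance the assignment $e_1(\Lambda)=1$, $e_2(\Lambda)=[\ZD:\Lambda]-1$ violates nothing you have written down, and for $d\ge 2$ entropy cannot rule it out either, since $\ZD$-SFT entropies are not constrained to logarithms of Perron numbers (the paper notes that for every $d>1$ and prime $p$ there are $\ZD$-SFTs of entropy $\log p$ that are \emph{not} direct prime). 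So no choice of ``clever'' $\Lambda$, and no purely arithmetic bookkeeping of the exponents $e_i(\Lambda)$, will produce the contradiction. A second, more minor, issue: your claim that any nontrivial SFT acquires at least two $\Lambda$-periodic points for sparse enough $\Lambda$ is false for general $\ZD$-SFTs with $d\ge 2$ (there are nonempty SFTs with no periodic points at all); it holds here only because $Y_i$ is a direct factor of a full shift, via the density argument you mention.

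What actually closes the gap in the paper is a rigidity statement much stronger than divisibility (Theorem \ref{thm:full_shift_factorizations}): every direct factor $Y_i$ of $[n]^{\ZD}$ satisfies $|Y_i^{(L)}|=m_i^{[\ZD:L]}$ for \emph{every} finite-index $L$, where $m_i$ is the number of fixed points of the full $\ZD$-action on $Y_i$ and $n=\prod_i m_i$. Once this is known, $n=p$ prime forces $m_1m_2=p$, hence some $m_i=1$, hence $|Y_i^{(L)}|=1$ for all $L$, and density of periodic points finishes the proof. The proof of that rigidity is by induction on $d$: given $L$ with basis $v_1,\dots,v_d$, one forms the saturated corank-one subgroup $\tilde K=(\mathrm{span}_{\QQ}\{v_1,\dots,v_{d-1}\})\cap\ZD$, observes that the fixed-point subsystem $X^{(\tilde K)}$ of the full shift is itself a $\ZZ$-full shift under the residual $\ZD/\tilde K\cong\ZZ$ action with $Y_i^{(\tilde K)}$ as a direct factor, and applies the $d=1$ case there; the base case $d=1$ is genuinely nontrivial external input (Lind's $\otimes$-factorization of zeta functions, or Kari's lemma that any direct factor of $[n]^{\ZZ}$ is shift-equivalent to $[m]^{\ZZ}$ with $m\mid n$). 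Your sketch contains neither this inductive descent through fixed-point subsystems nor any substitute for the $d=1$ rigidity, so as written the argument does not go through.
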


In fact, in \cite{lind_entropies_markov_shifts_1984}, three different proofs are provided for this fact for $d=1$. One proof is based on factorizations and does not extend to $d \ge 1$, since for every $d >1$ and a every prime number $p$ there exists a $\mathbb{Z}^d$ shift of finite type $X$ with topological entropy $h(X)=\log p$ which is not topologically prime. Furthermore, such examples can be constructed with ``good mixing properties'' \cite{MR2645044}.
 Another proof involves  ``$\otimes$-factorization'' of $\zeta$-functions and factorizations over $\mathbb{C}[t]$, and does not seem to extend to higher dimensions.
Yet another proof, attributed to G. Hansel uses only periodic point counts, and extends to any dimension with minor modification.

The essence of Hansel's proof for Theorem \ref{thm:full_shift_direct_prime} will serve us in  Section \ref{sec:dyck_factorization} as a component in the proof the Dyck shifts with a prime number of brackets are direct prime. We can deduce  Theorem \ref{thm:full_shift_direct_prime} using Theorem \ref{thm:full_shift_factorizations} below.

If $n=p_1 \cdot \ldots \cdot p_k$ is a factorization of $n$, it is clear that $$([n]^\GG,\sigma) \cong \prod_{i=1}^k ([p_i]^\GG,\sigma)$$ is a direct topological factorization, where $\GG$ can be any discrete group.

The following question seems to be open even for $\GG=\ZZ$.
\begin{question}
For which groups $\GG$ and natural numbers $n$ does $([n]^{\GG},\sigma)$ have a unique direct prime factorization upto reordering?
\end{question}

It is known that any direct factor of
$([n]^\ZZ,\sigma)$ is  \emph{shift-equivalent}
to $([m]^\ZZ,\sigma)$ with $ m | n$ (see Lemma $2.1$ of \cite{kari_CA_block_1996}).
Recall that $\ZZ$-shifts of finite type $(Y,\sigma)$ and $(Z,\sigma)$ are shift-equivalent if and only if they are \emph{eventually conjugate}, which means that $(Y,\sigma^k)$ is topologically conjugate to $(Z,\sigma^k)$
for all but finitely many $k$'s.

The following is a  natural generalization of ``eventual conjugacy'': Say that $\ZD$-flows $(X,T)$ and $(Y,S)$ are eventually conjugate if the subactions obtained from  $T$ and $S$
by passing to a finite-index subgroup $L < \ZD$ are topologically conjugate for all but a finite number of  subgroups $L$.
In \cite{kari_CA_block_1996} Kari conjectured that for any $d \ge 1$, and $n \in \NN$ and any direct factorization $([n]^{\ZD},\sigma) \cong (X \times Y, T \times S)$
there exists a finite index subgroup $L < \ZD$ so that
the restriction of the actions $T$ and $S$ to $L$ are both topologically conjugate to $\ZD$-full shifts.

Call $\GG$-actions $(X,T)$ and $(Y,S)$ \emph{periodically equivalent} if for any finite index normal subgroup $\HH \lhd \GG$, the number of $\HH$-fixed points in $X$ is equal to the number of $\HH$-fixed points in $Y$.

\begin{thm}\label{thm:full_shift_factorizations}
For any $d \ge 1$, any direct factor of a $\ZZ^d$ full-shift is periodically equivalent to a $\ZD$ full-shift.

Specifically:
Up to reordering of the terms, any direct-factorization of the $\ZZ^d$ full-shift   $([n]^{\ZD},\sigma)$ into direct-primes is of the form
$$([n]^{\ZD},\sigma) \cong \prod_{i=1}^k (Y_i,\sigma_i),$$
where $(Y_i,\sigma)$ is periodically equivalent to  the full-shift $([m_i]^{\ZD},\sigma)$  $n = \prod_{i=1}^k m_i$.
\end{thm}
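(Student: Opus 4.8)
The plan is to prove, by induction on $d$, the following self‑strengthening (for every $d\ge 1$ and every $n\ge 1$): \emph{if $(X,T)$ is a direct factor of $([n]^{\ZD},\sigma)$ and $m:=|\mathrm{Fix}_{\ZD}(X)|$, then $m\mid n$ and $|\mathrm{Fix}_{\HH}(X)|=m^{[\ZD:\HH]}$ for every finite‑index subgroup $\HH\le\ZD$}; that is, $(X,T)$ is periodically equivalent to $([m]^{\ZD},\sigma)$. The two assertions of the theorem are then immediate: the first is a restatement, and for the ``Specifically'' part one writes $([n]^{\ZD},\sigma)\cong (Y_i,\sigma_i)\times\prod_{j\ne i}(Y_j,\sigma_j)$, so each $Y_i$ is a direct factor of the full shift, hence periodically equivalent to $([m_i]^{\ZD},\sigma)$ with $m_i=|\mathrm{Fix}_{\ZD}(Y_i)|$; taking $\HH=\ZD$ in the identity $\prod_i|\mathrm{Fix}_\HH(Y_i)|=|\mathrm{Fix}_\HH([n]^{\ZD})|=n^{[\ZD:\HH]}$ yields $\prod_i m_i=n$.

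\textbf{Base case $d=1$ (the essence of Hansel's argument).} By Proposition~\ref{prop:SFT_direct_factor}, $X$ and $Y$ are $\ZZ$‑SFTs; up to conjugacy they are vertex shifts of finite directed graphs with non‑negative integer adjacency matrices $A$ and $B$, and $(X\times Y,\sigma)$ is the vertex shift of the tensor‑product graph, with adjacency matrix $A\otimes B$. Since $X\times Y\cong[n]^{\ZZ}$ we get $\mathrm{tr}\big((A\otimes B)^k\big)=n^k$, i.e. $\sum_{i,j}(\lambda_i\mu_j)^k=n^k$ for all $k\ge 1$, where $\lambda_i,\mu_j$ run over the eigenvalues of $A,B$. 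Uniqueness of power sums (the generating function $\sum_{k\ge1}(\sum_{i,j}(\lambda_i\mu_j)^k)t^k$ must equal $\frac{nt}{1-nt}$, which has a single simple pole) forces the multiset of nonzero values $\lambda_i\mu_j$ to be $\{n\}$ with multiplicity one; hence $A$ has a single nonzero eigenvalue $m$ and $B$ a single nonzero eigenvalue $n/m$. Therefore $|\mathrm{Fix}_k(X)|=\mathrm{tr}(A^k)=m^k$ for all $k$, and $m$ is a positive real algebraic integer all of whose powers are positive integers, so $m\in\ZZ_{\ge1}$ and $m\mid n$.

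\textbf{Inductive step.} Assume the claim in dimension $d-1$ (for every alphabet) and let $([n]^{\ZD},\sigma)\cong(X\times Y,\sigma\times\sigma)$, $m=|\mathrm{Fix}_{\ZD}(X)|$. Fix a finite‑index $\HH\le\ZD$. By the Smith normal form there is a $\ZZ$‑basis of $\ZD$ in which $\HH=d_1\ZZ\oplus\cdots\oplus d_d\ZZ$; since any automorphism of $\ZD$ induces an isomorphism of $([n]^{\ZD},\sigma)$ onto itself (carrying the factorization along and leaving $|\mathrm{Fix}_{\ZD}(X)|$ unchanged), we may assume $\HH=d_1\ZZ\times\Lambda$ with $\Lambda:=d_2\ZZ\oplus\cdots\oplus d_d\ZZ\le\ZZ^{d-1}$, $N':=[\ZZ^{d-1}:\Lambda]$ and $[\ZD:\HH]=d_1N'$. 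For a $\ZD$‑subshift $Z$ write $Z^{(\Lambda)}$ for the set of $x\in Z$ with $\sigma^{(0,\lambda)}x=x$ for all $\lambda\in\Lambda$, viewed as a $\ZZ$‑system via the first‑coordinate shift, and $Z_1$ for the set of $x\in Z$ with $\sigma^{(1,0,\dots,0)}x=x$, viewed as a $\ZZ^{d-1}$‑system via the remaining coordinates. Both operations commute with direct products, and $([n]^{\ZD})^{(\Lambda)}\cong[n^{N'}]^{\ZZ}$, $([n]^{\ZD})_1\cong[n]^{\ZZ^{d-1}}$; hence $X^{(\Lambda)}$ is a direct factor of $[n^{N'}]^{\ZZ}$ and $X_1$ is a direct factor of $[n]^{\ZZ^{d-1}}$ (both SFTs, by Proposition~\ref{prop:SFT_direct_factor}). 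By the base case, $|\mathrm{Fix}_k(X^{(\Lambda)})|=\ell^k$ for all $k$, where $\ell:=|\mathrm{Fix}_1(X^{(\Lambda)})|$; since $\mathrm{Fix}_k(X^{(\Lambda)})=\mathrm{Fix}_{k\ZZ\times\Lambda}(X)$, taking $k=d_1$ gives $|\mathrm{Fix}_\HH(X)|=\ell^{d_1}$. Next, $\ell=|\mathrm{Fix}_{\ZZ\times\Lambda}(X)|=|\mathrm{Fix}_\Lambda(X_1)|$, and by the inductive hypothesis applied to $X_1$ this equals $m_1^{N'}$ with $m_1=|\mathrm{Fix}_{\ZZ^{d-1}}(X_1)|=|\mathrm{Fix}_{\ZD}(X)|=m$. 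Hence $|\mathrm{Fix}_\HH(X)|=(m^{N'})^{d_1}=m^{[\ZD:\HH]}$, and $m\mid n$ because $|\mathrm{Fix}_{\ZD}(X)|\cdot|\mathrm{Fix}_{\ZD}(Y)|=n$. This completes the induction.

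The substantive content — the periodic‑point rigidity — is confined to the one‑dimensional base case, where the special structure of $\ZZ$‑SFTs (presentation by a matrix, the zeta function, uniqueness of power sums) is essential and has no direct analogue for $\ZD$. The main obstacle in the multidimensional step is to recognise that an \emph{arbitrary} finite‑index subgroup becomes, after a change of basis, a split subgroup $d_1\ZZ\times\Lambda$, and then to control the ``$d_1$'' direction by the one‑dimensional case applied to $X^{(\Lambda)}$ and the ``$\Lambda$'' direction by the $(d-1)$‑dimensional case applied to $X_1$; verifying that $X^{(\Lambda)}$ and $X_1$ are genuinely direct factors of the appropriate full shifts (via commutation of the two ``slicing'' operations with products, together with Proposition~\ref{prop:SFT_direct_factor}) is routine but indispensable.
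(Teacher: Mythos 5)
Your proof is correct and follows essentially the same route as the paper: induction on $d$, with the one-dimensional base case handled by the tensor-product/zeta-function power-sum argument (which the paper only cites from Lind and Kari rather than spelling out), and the inductive step obtained by slicing a finite-index subgroup into a rank-one and a corank-one piece and applying the base case and the induction hypothesis to the corresponding fixed-point subsystems. The only cosmetic difference is that you normalize the subgroup via Smith normal form before slicing, whereas the paper handles a general subgroup $L$ directly by passing to the saturation $\tilde K$ of a corank-one sublattice of $L$.
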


\begin{proof}
We will prove the statement by induction on $d$.
The base cases $d=1$ follows from $\otimes$-factorizations of the  $\zeta$ function of the full shift as in \cite[Section $7$ ]{lind_entropies_markov_shifts_1984}, or using the fact that
shift equivalence determines periodic-equivalence as in  \cite[ Lemma $2.1$]{kari_CA_block_1996}.

We now assume $d>1$. 
Given a $\ZD$-dynamical system $(X,T)$ and an infinite subgroup $L < \ZD$, 
 we denote by $X^{(L)}$ the fixed points of the $L$-subaction of $T$:
$$X^{(L)} := \left\{ x \in X ~:~ T^n(x) =x ~\forall n \in L\right\}.$$
It follows that $(X^{(L)},T)$ is a sub-system of $(X,T)$. Furthermore, $\ZD/L$ acts on $X^{(L)}$ via $T$, so we interpret  $(X^{(L)},T)$ as a $(\ZD/L)$-flow.

Suppose $([n]^{\ZD},\sigma) \cong \prod_{i=1}^k (Y_i,T_i)$.
Let $m_i$ be the number of fixed points of $(Y_i,T_i)$. It follows that $n =\prod_{i=1}^k m_i$.
We need to show that for any finite-index subgroup $L < \ZD$, we have $|Y_i^{(L)}|=m_i^{[\ZD:L]}$.

Let  $L$ be a finite-index subgroup of $\ZD$. There exist $v_1,\ldots,v_d \in \ZD$ so that $L= \bigoplus_{i=1}^d \ZZ v_i$ and so that $\{v_1,\ldots,v_d \}$  is a basis for $\mathbb{Q}^d$ as a vector space over $\mathbb{Q}$.
Let $K :=\mathit{span}_{\mathbb{Z}} \{ v_1,\ldots, v_{d-1} \}$ ,
$H:=\mathit{span}_{\mathbb{Q}}\{v_1,\ldots,v_{d-1}\}$ and  $\tilde{K} := H \cap \ZD$.

Check that $\ZD/\tilde K$ is a torsion free quotient of $\ZD$, and $\dim_{\mathbb{Q}} (\mathbb{Q}^d / H) =1$. It follows that $\ZD/\tilde K \cong \ZZ$. Let   $w_d \in \ZD$ be such that $w_d + \tilde K$ generates the group $\ZD/\tilde K$. We see that $\ZD = \tilde K \oplus \ZZ w_d $.

By the discussion above, the group $\ZD/\tilde K$ acts on $(Y_i)^{(\tilde K)}$. This is a  direct factor of $X^{(\tilde K)}$, which can be viewed as a $\ZZ$-full shift because $\ZD/ \tilde K \cong \ZZ$. Observe that the points in $(Y_i)^{(\tilde K)}$  which are fixed  by the shift action of $\ZD/\tilde K$  are precisely the fixed point of $Y_i$ under the $\ZD$ action.
Viewing  $(Y_i)^{(\tilde K)}$ as a direct factor of  $X^{(\tilde K)}$, it follows by the induction hypothesis (case $d=1$) that $((Y_i)^{(\tilde K)})^{(w_d)}$ is periodically equivalent to  a $\ZZ$-full shift of the form $[m_i]^\ZZ$. In particular,
\begin{equation}\label{eq:ZD_full_shift_factor_1}
|((Y_i)^{(\tilde K)})^{(\langle  v_d \rangle )}| = m_i^{[ (\ZD/ \tilde{K}) : \langle v_d +  \tilde{K}\rangle]},
\end{equation}
where $ \langle v_d +  \tilde{K}\rangle$ is the subgroup of $\ZD/ \tilde{K}$ spanned by $ v_d + \tilde{K}$.
Viewing  $Y_i^{(\langle  v_d \rangle)}$ as a subshift with respect to the action of  $\tilde{K}\cong \ZZ^{d-1}$ we have:
\begin{equation}\label{eq:ZD_full_shift_factor_2}
|((Y_i)^{(\langle  v_d \rangle)})^{(\tilde{K})}|= |((Y_i)^{(\tilde K)})^{(\langle v_d +  \tilde{K}\rangle)}| = m_i^{[ (\ZD/ \tilde{K}) : \langle v_d +  \tilde{K}\rangle]},
\end{equation}
where the second inequality follows from \eqref{eq:ZD_full_shift_factor_1}.

Because  both $Y_i^{(\langle  v_d \rangle)}$ and $X^{(\langle  v_d \rangle)}$ are $\tilde{K}\cong \ZZ^{d-1}$-subshifts,  by induction hypothesis, $((Y_i)^{(\langle  v_d \rangle)})$ is periodically equivalent to a  $\tilde{K}$-full-shift.
It follows that
$$ |Y_i^{(L)}|= |((Y_i)^{(v_d)})^{(K)}|= |((Y_i)^{(v_d)})^{(\tilde{K})}|^{[\tilde K : K]} = m_i^{[ (\ZD/ \tilde{K}) : \langle v_d +  \tilde{K}\rangle]\cdot [\tilde K :K ] }.$$

Since $L \cong K \oplus \langle v_d \rangle$ we have
$[ (\ZD/ \tilde{K}) : \langle v_d +  \tilde{K}\rangle]\cdot [\tilde K :K ] = [\ZD : L],$
and so
$$ |Y_i^{(L)}|= m_i^{[\ZD : L]}.$$

\end{proof}

\noindent\textbf{Problem:} Do Theorems \ref{thm:full_shift_direct_prime} and \ref{thm:full_shift_factorizations} above extend to other countable groups?

The following example shows that for finite cyclic groups this is not the case. I thank the referee for highlighting  the relation to algebra.

\begin{example}\label{exmp:full_factor}
Consider the full-shift with $5$ symbols over the finite cyclic group $\GG= \ZZ/ 4\ZZ$. 
The orbit counts determine a finite $\GG$-flow up to isomorphism. The given system has $5$ fixed points, $10$ orbits of length $2$ and $150$ orbits of length $4$.
This $\GG$-action lifts to a $\ZZ$-action given by multiplication by $x$ on $L=\mathbb{F}_5[x]/\langle x^4 -1\rangle$. 
As an $\mathbb{F}_5[x]$-module this system is a direct sum of two submodules, corresponding to the factorization $x^2-1=(x^2-1)(x^2+1)$.  Explicitly, we have $L=M\oplus N$ where $M=(x^2-1)L$ and $N=(x^2+1)L$. 
Here $M$ has $1$ fixed point and $10$ orbits of length $4$, while $N$ has $5$ fixed points and $10$ orbits of length  $2$.
\end{example}

\section{Direct-primeness for the $3$-colored chessboard}
\label{sec:3CB_factorization}

In this section we prove that the  ``$d$-dimensional  3-colored chessboard'', denoted by $C_3^d$,  is direct-prime. 
The $\ZD$-subshift $C_3^d \subset \{0,1,2\}^{\ZD}$
is a subshift of finite type  which consists of proper 3-colorings of $\ZD$, where we consider $\mathbb{Z}^d$ as the vertices of the Cayley graph with respect to the usual generators. Namely:

$$C_3^d := \{ x \in \{0,1,2\}^{\mathbb{Z}^d}~:~ x_n \ne x_m \mbox{ whenver } \|n-m\|_1=1\},$$
where $\|m\|_1= \sum_{i=1}^d |m_i|$ is the $l^1$ norm of $m = (m_1,\ldots,m_d) \in \ZD$.

It is useful to interpret $C_3^d$ in the context of \emph{graph homomorphisms}. We introduce some notation:

Given  graphs $G=(V_G,E_G)$ and $H=(V_H,E_H)$ we let $\GHom(G,H)$ denote the space of graph homomorphisms from $G$ to $H$.
$$\GHom(G,H) ~:=~ \left\{ x \in (V_H)^{V_G}~:~ (g_1,g_2) \in E(G) \Longrightarrow (x_{g_1},x_{g_2}) \in E_H  \right\}.$$
When $H$ is finite, we consider  $\GHom(G,H)$ as a compact topological space, with the topology induced from the product topology on
$(V_H)^{V_G}$. 

We identify $\ZD$ with the vertices of the Cayley graph of $\ZD$ with respect to the natural set of generators, and
interpret  $A \subset \ZD$ as the vertex set of the induced graph from the Cayley graph of $\ZD$. Let $\GHom(A,H)$  denote the set of graph homomorphisms from $A$ to $H$. The restriction
$\mathit{res} : \GHom(\ZD,H) \to  \GHom(A,H)$ given by $\mathit{res}(x)= x|_{A}$ is thus well defined.

With this notation,
 $$C_3^d= \GHom(\ZD,\ZZ/3\ZZ).$$

Consider the space $\GHom(\ZD,\ZZ) \subset \ZZ^{\ZD}$ of graph homomorphisms from the Cayley graph of $\ZD$ to the Cayley graph of $\ZZ$, both with respect to the standard generators. Namely,
$$\GHom(\ZD,\ZZ) = \{ x \in \ZZ^{\ZD} |x_n -x_m | =1 \mbox{ whenver } \|n-m\|_1=1\}.$$

Since the Cayley graph of $\ZZ$ covers the Cayley graph $\ZZ/3 \ZZ$, it follows that $\GHom(\ZD,\ZZ)$ projects to $C_3^d$ via the following continuous shift-equivariant map $\pi:\GHom(\ZD,\ZZ) \to C_3^d$ defined by:
\begin{equation}
\pi(x)_n := x_n \mod 3.
\end{equation}

The following  observation is classical

 (see for instance  \cite[ Section 4.3]{schmidt_cohomology_SFT_1995}): 
\begin{prop}\label{prop:3CB_lift}
The map $\pi:\GHom(\ZD,\ZZ) \to C_3^d$ is surjective. Furthermore, if $x,y \in \GHom(\ZD,\ZZ)$ satisfy $\pi(x)=\pi(y)$ then there exists $m \in 3\ZZ$ so that $x_n = y_n +m$ for all $n \in \ZD$.
\end{prop}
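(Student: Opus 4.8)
The plan is to establish surjectivity first and then the ``rigidity'' statement about the fibers. For surjectivity, given $c \in C_3^d$, I will build a lift $x \in \GHom(\ZD,\ZZ)$ by choosing a value at the origin (say $x_0 \in \{0,1,2\}$ with $x_0 \equiv c_0 \bmod 3$) and then propagating along paths in the Cayley graph of $\ZD$: if $n$ and $m$ are adjacent and $x_n$ has been defined, set $x_m$ to be the unique one of $x_n+1$, $x_n-1$ that is congruent to $c_m \bmod 3$ — this is well-defined because $c_m \ne c_n$, so exactly one of the two residues $x_n \pm 1 \bmod 3$ equals $c_m$. The thing to check is that this is independent of the path chosen, i.e. that it is consistent around cycles; since $\ZD$ with the standard generators has a Cayley graph whose cycle space is generated by the $4$-cycles (squares) $n, n+e_i, n+e_i+e_j, n+e_j$, it suffices to check consistency around a single square, which is an immediate finite case-check (around a square the coloring $c$ alternates between exactly two colors on opposite corners, forcing the increments of $x$ to cancel). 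Alternatively, and perhaps more cleanly, I would cite that the Cayley graph of $\ZZ$ is the universal cover of the Cayley graph of $\ZZ/3\ZZ$ (both are as homomorphism targets the ``line'' and the ``triangle''), so $\pi$ is exactly the map induced by a covering map of graphs, and graph homomorphisms $\ZD \to \ZZ/3\ZZ$ lift to the universal cover because $\ZD$'s Cayley graph is simply connected in the combinatorial sense that matters here (its fundamental group is generated by squares, which lift to closed loops).

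For the second assertion, suppose $x, y \in \GHom(\ZD,\ZZ)$ with $\pi(x) = \pi(y)$, i.e. $x_n \equiv y_n \bmod 3$ for all $n$. Consider $f_n := x_n - y_n \in 3\ZZ$. For adjacent $n, m$ we have $x_n - x_m \in \{+1,-1\}$ and $y_n - y_m \in \{+1,-1\}$, so $f_n - f_m = (x_n - x_m) - (y_n - y_m) \in \{-2, 0, 2\}$. But $f_n - f_m \in 3\ZZ$, and the only element of $3\ZZ$ in $\{-2,0,2\}$ is $0$. Hence $f$ is constant on edges, and since the Cayley graph of $\ZD$ is connected, $f$ is globally constant, equal to some $m \in 3\ZZ$; that is, $x_n = y_n + m$ for all $n$.

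The main obstacle is purely the surjectivity/path-consistency step: one must make sure the square-by-square argument genuinely generates all relations in the cycle space of the Cayley graph of $\ZD$, so that path-independence on squares upgrades to global path-independence. This is standard (the Cayley graph of $\ZD$ with standard generators is the $1$-skeleton of the cubical tessellation, whose $2$-cells are exactly these squares, so $H_1$ of the $1$-skeleton is killed by filling in squares), and I would phrase it via the universal-cover description to keep it brief; the rigidity step is then entirely elementary as above. Since the statement is flagged as classical with a reference to \cite{schmidt_cohomology_SFT_1995}, I would keep the write-up short, give the residue-increment argument for surjectivity explicitly, and dispatch the fiber description with the $3\ZZ \cap \{-2,0,2\} = \{0\}$ observation.
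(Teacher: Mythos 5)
The paper does not actually prove this proposition: it is stated as a classical observation with a pointer to Schmidt's monograph, so there is no in-paper argument to compare against. Your proof is correct and is the standard one --- the increment-propagation (covering-space) argument for surjectivity, with path-independence reduced to the $4$-cycles that generate the cycle space of the cubical lattice, and the observation $3\ZZ \cap \{-2,0,2\} = \{0\}$ for the description of the fibers. One small inaccuracy: around a square a proper $3$-coloring need not ``alternate between exactly two colors on opposite corners'' (the cyclic pattern $0,1,2,1$ is legal), so that parenthetical does not by itself justify the square consistency; the finite check still succeeds, though, and it is cleaner to note that the sum of the four $\pm 1$ increments around a square lies in $\{-4,-2,0,2,4\}$ and is divisible by $3$ because the colors return to their starting value, hence the sum is $0$. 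With that repair the write-up is complete, and the fiber argument is exactly right as written.
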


Our goal in this section is to prove the following result:
\begin{thm}\label{thm:3CB_prime}
For any $d \ge 1$, the $\mathbb{Z}^d$ 3-colored $C_3^d$ chessboard is  \dprime.
\end{thm}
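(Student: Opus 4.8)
The plan is to exploit the cohomological structure of $C_3^d$ via the covering $\pi:\GHom(\ZD,\ZZ)\to C_3^d$ of Proposition~\ref{prop:3CB_lift}, together with periodic point information, to rule out any nontrivial direct factorization. Suppose for contradiction that $C_3^d\cong Y\times Z$ with both factors nontrivial. By Proposition~\ref{prop:SFT_direct_factor} both $Y$ and $Z$ are $\ZD$-SFTs, and by Proposition~\ref{prop:direct_factor_expansive} they are subshifts. The first step is to pin down the periodic point counts: on any finite-index sublattice $L$, the $L$-periodic points of $C_3^d$ are the proper $3$-colorings of the finite quotient graph $\ZD/L$, and by Proposition~\ref{prop:3CB_lift} each such coloring either lifts to a $\ZZ$-valued graph homomorphism on $\ZD/L$ (when the ``winding'' around every cycle vanishes) or not. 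The key numerical fact to extract is that for suitable $L$ the number $|C_3^{d,(L)}|$ is $3$ times an integer whose prime factorization is controlled — in particular one can choose $L$ so that $|C_3^{d,(L)}|$ is $3$ times a number coprime to $3$, forcing (since $|Y^{(L)}|\cdot|Z^{(L)}|=|C_3^{d,(L)}|$) exactly one factor, say $Y$, to have $3\mid|Y^{(L)}|$ with $|Y^{(L)}/3|$ coprime to $3$, and in fact one wants to push this to show $|Z^{(L)}|$ is forced to be a power that is incompatible with $Z$ being a nontrivial subshift.

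More robustly, I would argue via the ``cohomological'' invariant directly. The group $3\ZZ\cong\ZZ$ acts on $\GHom(\ZD,\ZZ)$ by translation of values, freely, with quotient $C_3^d$; this exhibits $C_3^d$ as carrying a continuous $\ZZ$-valued cocycle (the ``height difference'' cocycle) whose cohomology class is nontrivial on many loops. If $C_3^d\cong Y\times Z$, then the cocycle must ``split'' as a sum of a cocycle pulled back from $Y$ and one pulled back from $Z$; analyzing the induced decomposition on periodic points — where the cocycle becomes a homomorphism from $H_1(\ZD/L)\cong(\ZD/L)^{?}$ or rather from the cycle space of the quotient graph to $\ZZ$ — one sees that the winding numbers around the $d$ generating loops of $\ZD/L$ cannot be distributed consistently between two independent factors, because flipping a single vertex's color changes windings around all loops through that vertex simultaneously in a correlated way. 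The concrete contradiction: choose $L=k\ZD$ with $k$ coprime to $3$; count $|C_3^{d,(L)}|$ and show its largest $3$-free part has a prime factor $p$ appearing to a power not of the form (something)$^{[\ZD:L]}$, contradicting the multiplicativity that a genuine product structure would impose on periodic counts as one varies $L$ (cf. the mechanism in Theorem~\ref{thm:full_shift_factorizations}).

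I expect the main obstacle to be making the periodic-point count precise enough: one needs an explicit or asymptotic handle on $|\GHom(\ZD/L,\ZZ/3\ZZ)|$ — equivalently the number of proper $3$-colorings of discrete tori — and on the ``windable'' versus ``non-windable'' split coming from Proposition~\ref{prop:3CB_lift}, in a way that survives all finite-index $L$ simultaneously. The transfer-matrix / zeta-function bookkeeping that works cleanly for full shifts (Hansel's argument) must be replaced here by a count that genuinely uses the $3$-coloring constraint, and the delicate point is that $C_3^d$ is \emph{not} periodically equivalent to any full shift, so one cannot merely mimic Theorem~\ref{thm:full_shift_factorizations}; instead the contradiction must come from the rigidity of how the height cocycle restricts to periodic points. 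I would isolate this as the crux, likely handled by choosing $L$ of the form $p\ZD$ for a well-chosen prime $p$ (e.g.\ $p\equiv 1\pmod 3$ or $p=2$) and computing $|C_3^{d,(L)}|$ modulo a second prime to exhibit the incompatibility with a product decomposition.
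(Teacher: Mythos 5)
You have correctly identified the two ingredients the paper uses --- the height cocycle coming from the lift $\pi:\GHom(\ZD,\ZZ)\to C_3^d$ and periodic-point information --- but the proposal has a genuine gap at its load-bearing step. You assert that if $C_3^d\cong Y\times Z$ then the height cocycle ``must split as a sum of a cocycle pulled back from $Y$ and one pulled back from $Z$.'' This is not automatic and is not what the paper proves: there is no a priori reason the cohomology of a product decomposes this way, since a coboundary $f(\sigma^n x)-f(x)$ may involve both coordinates jointly. The paper's argument requires two substantial inputs that are absent from your sketch. First, a classification of \emph{all} locally constant cocycles on $C_3^d$ up to coboundary as $a\cdot\Ht+\alpha$ with $\alpha$ trivial (Proposition \ref{prop:cohomology_X_3}, proved by induction on $d$ starting from Schmidt's fundamental-cocycle theorem for $d=2$). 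Second, a construction that produces from $\Ht$ a cocycle manifestly depending on only one factor: one averages $\Ht((y,\sigma^m\hat z),n)$ over a finite orbit of a periodic point $\hat z\in Z^L$ with $[\ZD:L]$ odd, then applies the classification to this averaged cocycle. The conclusion is a \emph{dichotomy} (up to coboundary, $\Ht$ depends only on $Y$, or only on $Z$), not a splitting; the trivial factor is then eliminated using the fact that there are exactly $3$ maximal-slope points (Lemma \ref{lem:3CB_max_slope_points}), density of periodic points (Lemma \ref{lem:3CB_densePP}), and topological mixing.

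Your fallback --- an arithmetic count of proper $3$-colorings of discrete tori, choosing $L=p\ZD$ and working modulo a second prime --- is the part you yourself flag as unresolved, and I see no reason it can close the gap: unlike the full shift, $|C_3^{d,(L)}|$ has no known factorization structure that would contradict multiplicativity of periodic counts over a product, and the paper explicitly does not argue this way for $d\ge 2$ (only the $d=1$ case is handled by an entropy/Perron argument). As written, the proposal is a plausible plan whose crux is missing rather than a proof.
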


The case $d=1$  easily follows from Lind's work \cite{lind_SFT_factorization_1983}: Note that $h(C_3^1)=\log(2)$ and $2$ is a prime number (in particular an irreducible Perron number). Now observe that
$C_3^1$  is a mixing $\ZZ$-SFT and conclude it must be direct-prime.

For $d \ge 2$ our argument is based on the cohomology of shift action on $C_3^d$. We briefly recall definitions to make our exposition reasonably self-contained.
See \cite{schmidt_cohomology_SFT_1995} for various results and applications regarding  cohomology of $\ZD$ subshifts of finite type.

\begin{defn}
Let $(X,T)$ be a $\mathbb{Z}^d$-dynamical system.

\begin{enumerate}
\item{
A continuous cocycle  $(X,T)$  (abbreviated $T$-cocycle, or cocycle when $T$ is clear for the context) is a continuous function $c: X \times \mathbb{Z}^d \to \mathbb{R}$
satisfying
$$c(x,n+m) = c(x,m) + c(T^{m} x,n), ~\forall x \in X,~ m,n \in \ZD.$$
}
\item{ A \emph{trivial} cocycle is of the  from  $c(x,n)= \alpha(n)$, where $\alpha \in \mathit{Hom}(\mathbb{Z}^d,\mathbb{R})$.}
\item{
A $T$-coboundary is a cocycle $b:X \times \mathbb{Z}^d \to \mathbb{R}$ of the form
$b(x,n) = f(T^n x) - f(x)$, where $f \in C(X)$. }
\item{The \emph{cohomology group} of $(X,T)$ is the group of  cocycles modulo the subgroup of coboundaries, where the group operation is pointwise addition. We denote the  cohomology group by $H(X,T)$. When $X$ is a subshift and $T$ the shift action, we abbreviate this by $H(X)$.}
\end{enumerate}
\end{defn}

By Proposition \ref{prop:3CB_lift} the map  $\Ht:C_3^d \times \ZD \to \RR$ given by
\begin{equation}\label{eq:Ht_def}
\Ht(x,n) = \hat{x}_0 - \hat{x}_n,
\end{equation}
where $\hat{x} \in \GHom(\ZD,\ZZ)$ satisfies $\pi(\hat{x})=x$ is a well-defined cocycle. It is  not difficult to check that  $\Ht$ is not cohomologous to a trivial cocycle (see \cite{schmidt_fundamental_cocycles_1998}). We refer to $\Ht$ as the \emph{height cocycle}.

In the following we  consider $\ZZ^{d-1}$ as a subgroup of $\ZZ^d$, using the embedding
$$(n_1,\ldots, n_{d-1}) \mapsto (n_1,\ldots,n_{d-1},0).$$

The subshift $C_3^{d-1}$ is embedded as a sub-system of a $\ZZ^{d-1}$ sub-action of $C_3^d$ as follows: Consider the
subspace
\begin{equation}\label{eq:tilde_C_def}
\tilde{C}_3^d = \{ x\in C_3^{d}~:~ x_{n+e_d}=x_{n} +1 \mod 3 ~\forall n \in \ZD \}.
\end{equation}

The map $x \mapsto x|_{\ZZ^{d-1}}$ is a $\ZZ^{d-1}$-equivariant homeomorphism from $\tilde{C}_3^d$ to $C_3^{d-1}$.

\begin{lem}\label{lem:3CB_glue}
For any $x \in C_3^d$ and $N \in \mathbb{Z}$ there exist $y \in C_3^d$ and $z \in \tilde{C}_3^d$ so that
\begin{equation}\label{eq:y_def}
y_{n+ke_d} = \begin{cases}
 x_{n+ke_d} & k \ge N\\
z_{n+ke_d}   & k \le N
\end{cases},
 ~n \in \ZZ^{d-1},
\end{equation}
\end{lem}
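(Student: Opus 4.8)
The idea is that a proper $3$-coloring of $\ZD$ can be "switched over" from an arbitrary pattern in the upper half-space $\{k \ge N\}$ to a $\tilde C_3^d$-pattern in the lower half-space $\{k \le N\}$, provided we arrange the crossover so that it is compatible with both sides. First I would lift $x$ to a height function $\hat x \in \GHom(\ZD,\ZZ)$ using Proposition \ref{prop:3CB_lift}, and similarly plan to build a height function $\hat y$ that agrees with $\hat x$ (up to a constant multiple of $3$) on $\{k \ge N\}$ and that restricts, on each hyperplane $\{k \le N\}$, to the unique "linearly increasing in $e_d$" behavior defining $\tilde C_3^d$. Concretely, on the slice $\{k = N\}$ set $w := \hat x|_{\{k=N\}}$, which is a graph homomorphism from $\ZZ^{d-1}$ (shifted to height $N$) into $\ZZ$; then define $\hat y_{n + k e_d} := \hat x_{n + k e_d}$ for $k \ge N$ and $\hat y_{n + k e_d} := w_n + (k - N)$ for $k \le N$, where $n \in \ZZ^{d-1}$.

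The key step is to check that this $\hat y$ is a genuine graph homomorphism $\ZD \to \ZZ$, i.e.\ that $|\hat y_p - \hat y_q| = 1$ whenever $\|p - q\|_1 = 1$. For edges with both endpoints in $\{k \ge N\}$ this is inherited from $\hat x$; for edges within a single slice $\{k = k_0\}$ with $k_0 \le N$ it holds because $n \mapsto w_n + (k_0 - N)$ differs from $w$ by a constant and $w$ is a homomorphism; for a vertical edge between heights $k_0$ and $k_0 + 1$ with $k_0 < N$ we get a difference of exactly $1$ by construction; and the only delicate case is the "seam" between heights $N-1$ and $N$, but there both $\hat y$-values equal $\hat x$ on the slice $k = N$ and $w_n + (N-1-N) = w_n - 1 = \hat x_{n + (N-1)e_d} \pm$ — here one must verify that $w_n - 1$ is consistent, which fails in general. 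So in fact the naive construction needs a correction: rather than forcing the lower half to be exactly $\tilde C_3^d$ starting at height $N$, I would first extend $x$ downward in the $e_d$-direction in the $\tilde C_3^d$ manner only from some height $N' \le N - 1$ onward, and for the single transitional layer between $N'$ and $N$ use the freedom in choosing a $3$-coloring of the "slab" $\ZZ^{d-1} \times \{N', \dots, N\}$ that matches $x$ on top and matches the forced linear pattern on the bottom. Such an interpolating coloring exists because one can, slice by slice, recolor greedily: given a proper coloring of slice $k+1$, the constraint on slice $k$ only forbids, at each vertex $n$, the color used at $n$ in slice $k+1$, leaving two admissible colors, and the $\ZZ^{d-1}$-grid being bipartite one can realize any prescribed coloring of a bounded number of intermediate slices.

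The main obstacle is precisely this interpolation across finitely many layers: showing that an arbitrary proper $3$-coloring on the hyperplane $k = N$ can be joined, through a bounded-width slab, to the rigid coloring of the hyperplane $k = N'$ determined by $\tilde C_3^d$. I expect to handle it by passing to heights: in terms of the height function a proper $3$-coloring of a slice is (up to an additive constant in $3\ZZ$) a $\ZZ$-valued homomorphism on $\ZZ^{d-1}$, two such homomorphisms $u, v$ on adjacent slices are "stackable" iff $|u_n - v_n| = 1$ for all $n$, and one checks that any homomorphism $u$ can be connected to the zero homomorphism (which corresponds to the $\tilde C_3^d$-slice) by a finite chain $u = u_0, u_1, \dots, u_r = 0$ of homomorphisms with $|u_j - u_{j+1}| \equiv 1$ pointwise — e.g.\ by repeatedly "pushing down local maxima" of $u$, which terminates because $u$ takes finitely many values on any bounded region and the construction is local. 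Stacking these slices $u_0, \dots, u_r$ as the slab between heights $N - r$ and $N$, and then continuing linearly below height $N - r =: N'$, produces the required $z \in \tilde C_3^d$ and $y \in C_3^d$; projecting back mod $3$ via $\pi$ and relabeling $N'$ as the cutoff (or, if the statement insists on the cutoff being exactly $N$, absorbing the slab by a shift) completes the proof. A routine verification that $\pi(\hat y) \in C_3^d$ and that $y$ restricted to $\{k \le N'\}$ lies in (a shift of) $\tilde C_3^d$ finishes things.
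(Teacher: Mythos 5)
Your opening construction is essentially the paper's proof, and it is correct as it stands: lift $x$ to $\hat x$, set $w := \hat x|_{\{k=N\}}$, define the lower half by $\hat z_{n+ke_d} := w_n + (k-N)$, and glue. (The paper works directly mod $3$, setting $z_{n+ke_d} := x_{n+Ne_d} + k - N \bmod 3$; this is the same construction.) Where you go wrong is in declaring that this ``fails in general'' at the seam between heights $N-1$ and $N$. The only thing to check there is $|\hat y_{n+Ne_d} - \hat y_{n+(N-1)e_d}| = |w_n - (w_n - 1)| = 1$, which holds trivially. You appear to be additionally demanding that $\hat y_{n+(N-1)e_d}$ agree with $\hat x_{n+(N-1)e_d}$, but the lemma does not require $y$ to agree with $x$ below height $N$: at height $N-1$ the value of $y$ is prescribed by $z$, not by $x$. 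Likewise there is no ``rigid coloring of the hyperplane determined by $\tilde{C}_3^d$'' that you must interpolate to: the definition of $\tilde{C}_3^d$ only demands $z_{n+e_d} = z_n + 1 \bmod 3$, and its slice at any fixed height can be an \emph{arbitrary} proper $3$-coloring of $\ZZ^{d-1}$ (indeed restriction to $\ZZ^{d-1}$ is a homeomorphism $\tilde{C}_3^d \to C_3^{d-1}$, as the paper notes). So no interpolating slab is needed.

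The repair you then propose is not only unnecessary but contains a false step. You claim that any height function $u$ on $\ZZ^{d-1}$ can be joined to the zero homomorphism by a finite chain $u = u_0, u_1, \ldots, u_r = 0$ with $|u_j(n) - u_{j+1}(n)| = 1$ for all $n$, by ``pushing down local maxima.'' This cannot work: after $r$ steps each value changes by at most $r$, so an unbounded height function (for instance $u_n = n_1 + \cdots + n_{d-1}$, which has no local maxima at all) can never reach $0$ in finitely many layers. Delete the detour, keep your first construction, and verify the three kinds of edges (horizontal within a slice, vertical below the seam, vertical across the seam); that is a complete proof.
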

\begin{proof}
For  $x \in C_3^d$, define $z \in \tilde{C}_3^d$  by
\begin{equation}\label{eq:z_def_ht}
z_{n + ke_d} := x_{n+Ne_d} + k-N \mod 3,
~ n \in \ZZ^{d-1}, ~ k \in \ZZ
\end{equation}
Check that indeed $z \in \tilde{C}_3^d$ and that the unique $y \in \{0,1,2\}^{\ZD}$ given by \eqref{eq:y_def} is a well defined point in $C_3^d$.

\end{proof}

The following is an adaptation of a result  from  \cite{schmidt_fundamental_cocycles_1998}:

\begin{prop}\label{prop:cohomology_X_3}
For any $d \ge 2$, any continuous locally constant cocycle on $C_3^d$ is cohomologous to a sum of trivial cocycle
 and a multiple of the height cocycle $\mathit{ht}$.
To be precise, for any locally constant cocycle $c:C_3^d \times \ZD \to \mathbb{R}$ there exist $a \in \mathbb{R}$,
$\alpha \in \Hom(\ZD,\mathbb{R})$ and a locally constant function $f:C_3^d \to \mathbb{R}$ so that
$$c(x,n) = a \cdot \Ht(x,n) + \alpha(n) + f(\sigma^n(x))-f(x).$$
\end{prop}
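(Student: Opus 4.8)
The plan is to reduce the $d$-dimensional statement to the one-dimensional case by induction on $d$, using the height cocycle $\Ht$ to "straighten out" the dependence in the $e_d$ direction. First I would recall the key one-dimensional fact: for a mixing $\ZZ$-SFT, every locally constant (i.e.\ continuous, since the space is totally disconnected and the cocycle is $\RR$-valued with finite image on a clopen partition) cocycle $c(x,n)$ is determined by the function $x\mapsto c(x,1)$, and two such are cohomologous iff the associated "periodic-orbit weights" agree; the upshot is that on $C_3^1$ every locally constant cocycle is cohomologous to a constant multiple of $\Ht$ plus a homomorphism $\ZZ\to\RR$. (This is essentially the content of \cite{schmidt_fundamental_cocycles_1998}, restricted to the single mixing SFT $C_3^1$, whose only nontrivial locally constant cohomology is generated by the height cocycle.)

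Next, given a locally constant cocycle $c$ on $C_3^d$ with $d\ge 2$, I would first handle the $e_d$-direction. Restrict attention to the sub-$\ZZ$-action generated by $\sigma^{e_d}$. Using Lemma \ref{lem:3CB_glue}, any two points of $C_3^d$ can be glued along a large index in the $e_d$-direction to a point lying in (a copy of) $\tilde C_3^d$, and on $\tilde C_3^d$ the restriction $x\mapsto x|_{\ZZ^{d-1}}$ identifies the system with $C_3^{d-1}$. The gluing argument, together with the one-dimensional cohomology computation applied fiberwise in the $e_d$-direction, should show that after subtracting a suitable multiple $a\cdot\Ht$ and a homomorphism term, the cocycle $c':=c - a\Ht - \alpha'$ restricted to the subgroup $\ZZ e_d$ is a coboundary: there is a locally constant $f_0$ with $c'(x,ke_d)=f_0(\sigma^{ke_d}x)-f_0(x)$. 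Replacing $c'$ by $c'-\delta f_0$ (where $\delta f_0(x,n)=f_0(\sigma^n x)-f_0(x)$), I may assume $c'(x,e_d)\equiv 0$; by the cocycle identity this forces $c'(x,n)$ to depend only on $x$ "up to the $e_d$-coordinate", i.e.\ $c'(x,n)$ for $n\in\ZZ^{d-1}$ is invariant under $\sigma^{e_d}$, hence descends to a locally constant cocycle $\bar c$ on the quotient system, which is exactly the $\ZZ^{d-1}$-subshift $C_3^{d-1}$ (via $\tilde C_3^d$). Here the main subtlety is checking that the cocycle, now killed on $\ZZ e_d$, genuinely factors through $C_3^{d-1}$ as a $\ZZ^{d-1}$-cocycle, and that the glued points produced by Lemma \ref{lem:3CB_glue} let one transfer the coboundary correction from $\tilde C_3^d$ back to all of $C_3^d$ without spoiling the normalization in the $e_d$-direction.

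Then I would apply the induction hypothesis to $\bar c$ on $C_3^{d-1}$: it is cohomologous to $a''\cdot\Ht^{(d-1)} + \alpha'' + \delta g$ for a locally constant $g$ on $C_3^{d-1}$, where $\Ht^{(d-1)}$ is the $(d-1)$-dimensional height cocycle. Pulling $g$ and $\Ht^{(d-1)}$ back along the quotient map $C_3^d\to C_3^{d-1}$, and observing that the pullback of $\Ht^{(d-1)}$ is just the restriction of $\Ht$ to $\ZZ^{d-1}$ (both measure the same height difference), one assembles $a$, $a''$ into a single multiple of $\Ht$, combines $\alpha',\alpha'',\alpha''|$ into a single $\alpha\in\Hom(\ZD,\RR)$, and combines $f_0$, $f_0'$, and the pullback of $g$ into a single locally constant $f:C_3^d\to\RR$ — giving exactly $c(x,n)=a\cdot\Ht(x,n)+\alpha(n)+f(\sigma^n x)-f(x)$.

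I expect the genuinely hard step to be the middle one: showing that, after correcting by a multiple of $\Ht$ and a homomorphism, the cocycle becomes a coboundary in the $e_d$-direction and the residual cocycle descends cleanly to $C_3^{d-1}$. This is where the specific combinatorial structure of proper $3$-colorings enters — the fact (Proposition \ref{prop:3CB_lift}) that $3$-colorings lift to $\ZZ$-valued graph homomorphisms unique up to a global shift by $3\ZZ$ is what makes $\Ht$ capture all of the "nontrivial" direction of the cohomology, and the gluing Lemma \ref{lem:3CB_glue} is the tool that converts this local rigidity into the global coboundary statement. The remaining bookkeeping — that finite-image locally constant functions can be used throughout, that the various $\alpha$'s and $f$'s can be amalgamated, that pullbacks of height cocycles agree with restrictions — is routine.
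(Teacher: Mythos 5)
There is a genuine gap, and it sits exactly where you place the weight of the argument. Your base input --- that on $C_3^1$ every locally constant cocycle is cohomologous to a constant multiple of $\Ht$ plus a homomorphism --- is false. For any nontrivial mixing $\ZZ$-SFT the group of locally constant functions modulo coboundaries is infinite-dimensional (two locally constant functions are cohomologous iff all their periodic-orbit sums agree, and these sums can be prescribed with infinitely many degrees of freedom), so the one-dimensional analogue of the proposition fails badly; this is precisely why the statement is restricted to $d\ge 2$. The rigidity being used is a genuinely multidimensional phenomenon: the paper starts the induction at $d=2$ by invoking Schmidt's Theorem $7.1$ of \cite{schmidt_fundamental_cocycles_1998}, which identifies the fundamental cocycle of the \emph{two-dimensional} $3$-colored chessboard, and that theorem is not a consequence of one-dimensional cocycle theory. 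Consequently your middle step --- subtracting $a\cdot\Ht+\alpha'$ so that $c'$ becomes a coboundary on the $\ZZ e_d$ subaction --- has no valid input to run on; you acknowledge this is the hard step but the tool you propose for it does not exist. A secondary error: $\tilde C_3^d$ is a \emph{subsystem} of $C_3^d$ (the points with $x_{n+e_d}=x_n+1 \bmod 3$), not the quotient of $C_3^d$ by $\sigma^{e_d}$, so even after normalizing $c'(\cdot,e_d)\equiv 0$ the resulting $\sigma^{e_d}$-invariant cocycle does not ``descend to $C_3^{d-1}$ via $\tilde C_3^d$'' in the way you assert.

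The paper's actual induction step avoids both problems: for $d\ge 3$ it restricts the cocycle to the $\ZZ^{d-1}$-subaction, uses Lemma \ref{lem:3CB_glue} to glue an arbitrary $x$ to a point of $\tilde C_3^d$ and thereby express $c(x,n)$, for $n\in\ZZ^{d-1}$, in terms of the restricted cocycle on $\tilde C_3^d\cong C_3^{d-1}$ plus explicit coboundary-type correction terms $c(\cdot,2Ne_d)$; the induction hypothesis then gives the normal form $a_1\Ht+\alpha_1+\delta f_1$ in the directions $e_1,\dots,e_{d-1}$. It then repeats this with the second embedding $(n_1,\dots,n_{d-1})\mapsto(0,n_1,\dots,n_{d-1})$ to get $a_2\Ht+\alpha_2+\delta f_2$ in the directions $e_2,\dots,e_d$, and matches the two answers on the overlap $2\le i\le d-1$ (nonempty since $d\ge 3$), using topological mixing to conclude $f_1-f_2$ is constant. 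If you want to salvage your plan, replace the false one-dimensional input with Schmidt's $d=2$ theorem as the base case and adopt the two-overlapping-embeddings bookkeeping rather than attempting to trivialize the $e_d$-direction first.
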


\begin{proof}
For $d=2$ this follows from \cite[ Theorem $7.1$]{schmidt_fundamental_cocycles_1998}, which identifies the so called ``fundamental cocycle'' for the two-dimensional $3$-colored chessboard $C_3^{(2)}$.
Instead of  trying to extend the  arguments of \cite{schmidt_fundamental_cocycles_1998} to higher dimensions, we will proceed by induction on $d \ge 2$, using  \cite[ Theorem $7.1$]{schmidt_fundamental_cocycles_1998} to start the induction.

Let $c:C_3^d \times \ZD \to \mathbb{R}$ be a locally constant cocycle.
Hence there exists   a finite set $F \Subset \ZD$ so that  so that $x|_{F}$ determines $c(x,e_i)$ for all $x \in C_3^d$ and $i=1,\ldots,d$.  
It follows that the restriction of $x|_{F+\ZZ^{d-1}}$ determines $c(x,n)$ for all $n \in \ZZ^{d-1} $.
Choose $x \in C_3^d$.
Apply  Lemma \ref{lem:3CB_glue} to find $y \in C_3^d$ and $z \in \tilde{C}_3^d$ so that $y|_{F+\ZZ^{d-1}}=x|_{F+\ZZ^{d-1}}$,
 and so that for all $k \ge N$,
$$ \sigma^{ke_d}(y)|_{F+\ZZ^{d-1}}= \sigma^{ke_d}(z)|_{F+\ZZ^{d-1}}.$$

It follows that for $n \in \ZZ^{d-1}$
$$c(x,n)=c(y,n)=c(y,2Ne_d)+c(\sigma^{2Ne_d}(y),n)+c(\sigma^{2Ne_d+n}(y),-2Ne_d),$$
so
\begin{equation}\label{eq:3CB_cocycle_reduction}
c(x,n)=c(y,2Ne_d)+ \tilde{c}(z,n) - c(\sigma^{n}y,2Ne_d),
\end{equation}
where $\tilde c: \tilde{C}^d_3 \times \ZZ^{d-1} \to \mathbb{R}$ is the restriction of $c:C^d_3 \times \ZD \to \mathbb{R}$.
Because the $\ZZ^{d-1}$ shift action on $\tilde{C}^d_3$ is isomorphic to $C^{d-1}_3$, by  induction on $d \ge 2$,
there exist $a \in \mathbb{R}$, $\alpha \in \Hom(\ZZ^{d-1}, \mathbb{R})$ and a locally constant function $\tilde f:\tilde{C}^d_3 \to \mathbb{R}$ so that

\begin{equation}\label{eq:tilde_c_def}
\tilde{c}(z,n) = a \Ht(z,n) + \alpha(n) + \tilde{f}(\sigma^n(z)) - \tilde{f}(z).
\end{equation}
From \eqref{eq:z_def_ht} we see that
$\Ht(z,n)=\Ht(\sigma^{Ne_d}(x),n)$ for $n \in \ZZ^{d-1}$.
Let $f(y) := c(y,3Ne_d)$. It follows from \eqref{eq:3CB_cocycle_reduction} and \eqref{eq:tilde_c_def} that for any $n \in \ZZ^{d-1}$ and any $x \in C_3^d$
\begin{equation}
c(x,n)= a\cdot \Ht(\sigma^{Ne_d}(x),n) + \alpha(n) + \tilde f(\sigma^n(x)) -\tilde f(x) + f(\sigma^n(x)) - f(x) .
\end{equation}
Now because
$$ \Ht(x,n) - \Ht(\sigma^{Ne_d}(x),n) = \Ht(x,Ne_d) -\Ht(\sigma^{n}(x),Ne_d),$$
we see that  indeed that there exists  $a_1 \in \mathbb{R}$, $\alpha_1 \in \Hom(\ZD,\mathbb{R})$  and a locally constant function $f_1:C_3^d \to \mathbb{R}$ so
that for any $x \in C_3^d$,
\begin{equation}\label{eq:3CB_coycle_subaction1}
c(x,e_i) = a_1 \Ht(x,e_i) + \alpha_1(e_i) + f_1(\sigma^{e_i}x)-f_1(x)
\end{equation}
for all $1\le i \le d-1$.

Repeat the above argument, this time embedding $\ZZ^{d-1}$ in $\ZD$ via
$$(n_1,\ldots,n_{d-1}) \mapsto (0,n_1,\ldots,n_{d-1})$$
to conclude that
there exists  $a_2 \in \mathbb{R}$, $\alpha_2 \in \Hom(\ZD,\mathbb{R})$  and a locally constant functions $f_2:C_3^d \to \mathbb{R}$ so
that for any $x \in C_3^d$
\begin{equation}\label{eq:3CB_coycle_subaction2}
c(x,e_i) = a_2 \Ht(x,e_i)  + \alpha_2(e_i) +f_2(\sigma^{e_i}x)-f_2(x)
\end{equation}
for all $2\le i \le d$.
The proof will be complete once we show that we can choose $a_1=a_2$ $\alpha_1=\alpha_2$ and  $f_1=f_2$.

Since $d \ge 3$, we choose  $2 \le i \le d-1$, and conclude from \eqref{eq:3CB_coycle_subaction1} and \eqref{eq:3CB_coycle_subaction2} that $a_1=a_2$
$\alpha_1(e_i) = \alpha_2(e_i)$ and $f_1(\sigma^{ke_i}x)-f_1(x)=f_2(k\sigma^{ke_i}x)-f_2(x)$ for all $k \in \NN$ and $x \in C_3^d$.
We can thus define $\alpha \in \Hom(\ZD,\mathbb{R})$ as follows:
$$\alpha(e_i) = \begin{cases}
\alpha_1(e_i) & 1 \le i \le d-1\\
\alpha_2(e_i) & i=d
\end{cases}.
$$
Because $C_3^{(d)}$ is topologically mixing, it follows that $f_1$ and $f_2$ differ by a constant,
and so indeed the cocycle $c:C_3^d \times \ZD \to \mathbb{R}$ is cohomologous to $a\cdot \Ht + \alpha$.
\end{proof}

The following lemma
 is a slight refinement of the fact that $C_3^d$ has a dense set of periodic points.

\begin{lem}\label{lem:3CB_densePP}
 For any infinite $A \subset \NN$, the set of points $x \in C_3^d$ whose stabilizer contains $k\ZD$ for some $k \in A$
is dense in $C_3^d$.
\end{lem}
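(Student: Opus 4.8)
The plan is to deduce the statement from a uniform periodic--approximation estimate together with the hypothesis that $A$ is infinite, and to prove that estimate by an explicit height--function construction. A basic neighbourhood of a point $x\in C_3^d$ has the form $U=\{x'\in C_3^d:\ x'|_{B_M}=x|_{B_M}\}$, where $B_r:=\{n\in\ZD:\ \|n\|_\infty\le r\}$; since $A$ contains arbitrarily large integers it suffices to prove: \emph{for every $M$ there is $k_0(M)$ so that for all $k\ge k_0(M)$ and all $x\in C_3^d$ there is $y\in C_3^d$ with $k\ZD\subseteq\mathrm{Stab}(y)$ and $y|_{B_M}=x|_{B_M}$.} A point $y$ with $k\ZD\subseteq\mathrm{Stab}(y)$ is nothing but a proper $3$--colouring of the discrete torus $(\ZZ/k\ZZ)^d$, so the task is to build such a torus colouring restricting to the prescribed pattern $x|_{B_M}$ on a centred box.

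I would fix an intermediate scale $R$ with $M<R<k/2$ and work on $B_R$. By Proposition~\ref{prop:3CB_lift} (applied on the connected set $B_R$) the colouring $x$ lifts there to a graph homomorphism $\hat x\colon B_R\to\ZZ$, whose oscillation over $B_M$ is at most $2dM$. Separately I fix a $k\ZD$--periodic ``background'' colouring $w\in C_3^d$ — one exists because $(\ZZ/k\ZZ)^d$ is $3$--chromatic — chosen so that its height lift $\hat w$ oscillates over $B_R$ by at most a constant $C(d)$. This is the one place where $k$ must be large: a $k\ZD$--periodic colouring has a monodromy in each coordinate forced to be $\equiv 0\pmod 3$, and taking this monodromy to be $0$ when $k$ is even (e.g.\ the $2\ZD$--periodic $2$--colouring $n\mapsto n_1+\cdots+n_d\bmod 2$) and $\pm3$ when $k$ is odd gives $\hat w$ a ``slope'' of size $O(1/k)$, hence oscillation $O(dR/k)\le C(d)$ once $k\ge C(d)R$.

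The core of the proof is an interpolation on the annulus $B_R\setminus B_M$. Put $S:=B_M\cup\{n\in B_R:\ \|n\|_\infty=R\}$ and, for an integer $c$, let $h\colon S\to\ZZ$ equal $\hat x$ on $B_M$ and $\hat w+3c$ on the outer shell. Since every point of $B_M$ lies at $\ell^1$--distance $\ge R-M$ from the outer shell while $\hat x$ and $\hat w$ oscillate only by $O(dM)$ and $O(C(d))$, one checks that if $R\ge C'(d)\,M$ then $c$ can be chosen (its parity being forced) so that $h$ is $1$--Lipschitz for the $\ell^1$ metric. Then the McShane--type extension $\hat y(n):=\max_{s\in S}\big(h(s)-\|n-s\|_1\big)$ agrees with $\hat x$ on $B_M$ and with $\hat w+3c$ on the outer shell; the delicate part — exactly what the parity of $c$ secures — is that all the functions $n\mapsto h(s)-\|n-s\|_1$ share a single parity at each vertex, so their maximum changes by an odd integer, hence by $\pm1$, across every edge, making $\hat y$ a genuine graph homomorphism to $\ZZ$ rather than merely $1$--Lipschitz. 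Finally I would set $y:=\hat y\bmod 3$ on $B_R$ and $y:=w$ off $B_R$ inside a fundamental box, extended $k\ZD$--periodically: the two pieces agree mod $3$ on the outer shell (where $\hat y\equiv\hat w\equiv w$), so $y$ is a well--defined $k\ZD$--periodic proper $3$--colouring with $y|_{B_M}=x|_{B_M}$; choosing $R:=C'(d)M$ and $k_0(M):=C(d)C'(d)M$ closes the argument.

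The only genuine obstacle is this interpolation step: first the observation that a ``small'' window colouring can be grafted onto a periodic background only if that background has bounded height on the relevant box — possible because for large $k$ the unavoidable nonzero monodromy contributes only a negligible slope — and second the parity bookkeeping that upgrades the max--formula from a $1$--Lipschitz extension to an actual graph homomorphism. Everything else (the reduction to periodic approximation and the mod--$3$ gluing) is routine, and the passage from ``some large $k$'' to ``some $k\in A$'' costs nothing since $A$ is infinite.
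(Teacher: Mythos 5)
Your proof is correct, and it takes a genuinely different (and more self-contained) route than the paper's, although both rest on the same foundation, namely the height-function lift of Proposition \ref{prop:3CB_lift}. The paper reduces the lemma to a density statement for lifts in $\GHom(\ZD,\ZZ)$ and then outsources the real work to an external result (Lemma 7.3 of the cited Chandgotia--Meyerovitch paper): any lift on $[-k,k]^d$ extends to a lift on a larger box with ``flat boundary'' (boundary oscillation at most $1$), after which periodic extension is routine and is left implicit. You replace that cited step by an explicit interpolation: you fix a $k\ZD$-periodic background colouring whose lift has small oscillation --- correctly noting that the monodromy of a $k$-periodic lift must lie in $3\ZZ$ and have the parity of $k$, so it can be taken to be $0$ for even $k$ and $\pm 3$ for odd $k$, giving oscillation $O(dR/k)$ on $B_R$ --- and you graft the prescribed window onto it via the McShane extension $\max_{s\in S}\bigl(h(s)-\|n-s\|_1\bigr)$. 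The two genuinely delicate points are exactly the ones you isolate: choosing the shift $3c$ so that $h$ is $1$-Lipschitz on $S$ (the parity of $c$ is forced by the requirement that all cones share a parity at each vertex, while its magnitude is free in a window of width much larger than $6$ once $R\ge C'(d)M$), and the observation that this shared parity upgrades the $1$-Lipschitz extension to a genuine graph homomorphism, i.e.\ to steps of exactly $\pm 1$. What your version buys is a reference-free argument with an explicit quantitative bound $k_0(M)=O_d(M)$ and a transparent treatment of the even/odd period dichotomy; what the paper's version buys is brevity. I see no gap in your argument.
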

\begin{proof}
By Proposition \ref{prop:3CB_lift} the statement will  follow once we prove that a corresponding set of points 
is dense in $\GHom(\ZD,\ZZ)$.

Indeed, for any $k \in \NN$, any $\hat{x} \in \GHom([-k,k]^d,\ZZ)$ is the restriction of a
$\hat{y} \in \GHom([-2k,2k],\ZZ)$ with ``flat boundary'', that is $|\hat{y}_n-\hat{y}_m| \le 1$ for any
$n,m\in \partial [-k,k]^d$, where $\partial [-k,k]^d := \{ m=(m_1,\ldots,m_d) \in \ZD~:~ \max_i |m_i| = k\}$.
A proof of can be found  for instance in \cite[ Lemma $7.3$]{changotia_meyerovitch_3CB}.
\end{proof}

The next lemma says that there only $3$ points in $C_3^d$ of ``maximal slope'':

\begin{lem}\label{lem:3CB_max_slope_points}
There exist precisely $3$ points $x \in C_3^d$ which 
satisfy
\begin{equation}\label{eq:max_ht}
\Ht(x,m) = m_1+m_2+\ldots + m_d
\end{equation}
for any $m= (m_1,\ldots, m_d) \in \ZD$.

Furthermore, if $x \in C_3^d$ has finite orbit and satisfies \eqref{eq:max_ht} for all $m$ in the stabilizer of $x$, then $x$ satisfies
 \eqref{eq:max_ht}  for all $m \in \ZD$.
\end{lem}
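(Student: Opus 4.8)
The plan is to work with the lift $\GHom(\ZD,\ZZ)$ via Proposition \ref{prop:3CB_lift}. A point $\hat x\in\GHom(\ZD,\ZZ)$ with $\hat x_0=0$ satisfies the "maximal slope" condition $\hat x_0-\hat x_m=m_1+\ldots+m_d$ for all $m$ if and only if $\hat x_m=-(m_1+\ldots+m_d)$, i.e. $\hat x$ is (the negative of) the linear map $m\mapsto\sum_i m_i$; one checks this is indeed a graph homomorphism $\ZD\to\ZZ$ since flipping the sign of a single coordinate changes the value by exactly $1$. By Proposition \ref{prop:3CB_lift}, the points $x\in C_3^d$ with $\Ht(x,m)=\sum_i m_i$ for all $m$ are exactly the three reductions mod $3$ of the functions $m\mapsto -(m_1+\ldots+m_d)+c$ for $c\in\{0,1,2\}$, giving precisely $3$ such points. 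For the uniqueness I would argue directly: if $\hat y\in\GHom(\ZD,\ZZ)$ has $\hat y_0-\hat y_m=\sum_i m_i$ for all $m$, then the value at any $m$ is forced, so there is no freedom beyond the choice of $\hat y_0$; passing to $C_3^d$ this freedom is $\ZZ/3\ZZ$. This disposes of the first assertion.

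For the second assertion, suppose $x\in C_3^d$ has finite orbit, with stabilizer $\Lambda<\ZD$ a finite-index subgroup, and suppose $\Ht(x,m)=\sum_i m_i$ for all $m\in\Lambda$. Lift to $\hat x\in\GHom(\ZD,\ZZ)$; then $\hat x_0-\hat x_m=\sum_i m_i$ for all $m\in\Lambda$. The key observation is that the function $m\mapsto \hat x_m + (m_1+\ldots+m_d)$ is $\Lambda$-periodic (by hypothesis, since $\hat x_{m+\lambda}=\hat x_m - \sum_i\lambda_i$ for $\lambda\in\Lambda$, and $\sum_i(m+\lambda)_i = \sum_i m_i+\sum_i\lambda_i$, wait — one must be careful about signs, but after fixing conventions it comes out periodic), hence bounded. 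Now I claim a bounded function $\hat x_m+\sum_i m_i$ arising this way must be constant. Here is where the graph-homomorphism rigidity enters: for each coordinate direction $e_i$, the sequence $k\mapsto \hat x_{m+ke_i}$ has steps $\pm1$, and the "shifted" sequence $k\mapsto \hat x_{m+ke_i}+k$ has steps in $\{0,2\}$; being bounded and monotone non-decreasing along $e_i$ forces it to be eventually constant in $k$, and $\Lambda$-periodicity then forces it to be globally constant in the $e_i$-direction. Doing this for all $i$ shows $\hat x_m+\sum_i m_i$ is constant, i.e. $\hat x$ has maximal slope everywhere, hence so does $x$.

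The step I expect to be the main obstacle is the "bounded $\Rightarrow$ constant" argument, because it requires carefully combining three facts: (i) the discrete-derivative constraint from the graph-homomorphism property (each unit step changes $\hat x$ by exactly $\pm1$), (ii) the sign condition forcing $\hat x_m+\sum m_i$ to be monotone (non-decreasing) along each positive coordinate ray — which follows because a step in direction $+e_i$ changes $\hat x$ by $+1$ or $-1$ while changing $\sum m_i$ by $+1$, so the sum changes by $0$ or $+2$, never negative — and (iii) $\Lambda$-periodicity to upgrade "eventually constant along a ray" to "globally constant." Getting the sign conventions in \eqref{eq:Ht_def} and \eqref{eq:max_ht} to line up with a \emph{monotone} (rather than merely bounded-variation) statement is the delicate bookkeeping; once monotone and bounded and periodic, constancy is immediate. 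I would also double-check that exactly $3$ (and not fewer) such points exist by exhibiting them explicitly as the mod-$3$ reductions of $m\mapsto m_1+\ldots+m_d$, $+1$, $+2$, which are visibly distinct and visibly satisfy \eqref{eq:max_ht}.
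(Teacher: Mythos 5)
Your proof is correct and follows essentially the same route as the paper: lift to $\GHom(\ZD,\ZZ)$ via Proposition \ref{prop:3CB_lift}, use the $\pm1$ increment constraint of graph homomorphisms together with finite-index periodicity of the stabilizer; your ``monotone plus periodic implies constant'' step is just a repackaging of the paper's observation that saturating the height bound along geodesics between stabilizer elements forces every unit step to equal $+1$. (Only a cosmetic slip: in your closing sentence the three explicit points should be the mod-$3$ reductions of $m\mapsto -(m_1+\cdots+m_d)+c$, as you correctly wrote earlier, given the sign convention in \eqref{eq:Ht_def}.)
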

\begin{proof}
Suppose $\hat{x} \in \GHom(\ZD,\ZZ)$ and $(m_1,\ldots,m_d) \in \ZD$  are such that
\begin{equation}\label{eq:hat_x_max_slope}
\hat{x}_{(m_1,\ldots,m_d)} - \hat{x}_0 = m_1 + \ldots +m_d
\end{equation}

Because $\hat{x}_{n+e_i}-\hat{x}_n \le 1$ for all $n \in \ZD$,  it follows that $\hat{x}_{n+e_i} = \hat{x}_n +1$  whenever
$n$ and $n+e_i$ are on a shortest path connecting $0$ and $(m_1,\ldots,m_d)$ in the Cayley graph of $\ZD$.
Now suppose $x$ satisfies  \eqref{eq:max_ht} for all  $m= (m_1,\ldots, m_d) \in \ZD$ and let $\hat{x} \in \GHom(\ZD,\ZZ)$ be such that $\pi(\hat{x})=x$.
It follows that $\hat{x}_{n+e_i} = \hat{x}_n +1$ for all $n \in \ZD$ and $i=1,\ldots,d$. Thus the value of $x_0 \in \ZZ / 3\ZZ$ uniquely determines $x$. This proves the first part of the lemma.

Now if $x$ has finite orbit, $L$ is the stabilizer of $x$ and \eqref{eq:max_ht} holds for all $m \in L$ and
$\hat{x} \in \GHom(\ZD,\ZZ)$ satisfies $\pi(\hat{x})=x$, then $\hat{x}_{n+e_i} = \hat{x}_n +1$ for all $n \in \ZD$, because
there exists some $m \in L$ for which $n$ and $n+e_i$ are on some shortest path connecting $0$ and $m$.
This proves the second part of the lemma.
\end{proof}

Recall that an automorphism $\psi$ of a $\ZD$-topological dynamical system $(X,T)$ is  homeomorphism $\psi:X \to X$  satisfying $\psi(T^n(x))= T^n(\psi(x))$ for all $x \in X$, $n \in \ZD$. We denote  the group of automorphisms of $(X,T)$ by $\Aut(X,T)$.
\begin{lem}\label{lem:aut_on_3BC}
For any $\psi \in \Aut(C_3^d,\sigma)$, there exists $ u_\psi \in \{\pm 1\}$ such that
the cocycle $\Ht_{\psi}$ defined by $\Ht_\psi(x,n) := \Ht(\psi(x),n)$ is cohomologous to $u_\psi \cdot \Ht$.
\end{lem}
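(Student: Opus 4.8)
The plan is to use the classification of locally constant cocycles on $C_3^d$ from Proposition \ref{prop:cohomology_X_3} together with the periodic-point information in Lemmas \ref{lem:3CB_densePP} and \ref{lem:3CB_max_slope_points}. First I would observe that for $\psi \in \Aut(C_3^d,\sigma)$ the function $\Ht_\psi(x,n) = \Ht(\psi(x),n)$ is indeed a continuous cocycle: the cocycle identity is preserved since $\psi$ is shift-equivariant, and continuity is inherited because $\psi$ is a homeomorphism. Moreover $\Ht$ is locally constant (it depends only on $\hat x_0$ and $\hat x_n$, hence on finitely many coordinates of $x$), and $\psi$ has a finite ``window'', so $\Ht_\psi$ is locally constant as well. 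By Proposition \ref{prop:cohomology_X_3} there are $a_\psi \in \RR$, $\alpha_\psi \in \Hom(\ZD,\RR)$ and a locally constant $f_\psi : C_3^d \to \RR$ with
$$\Ht_\psi(x,n) = a_\psi \cdot \Ht(x,n) + \alpha_\psi(n) + f_\psi(\sigma^n x) - f_\psi(x).$$
So it remains to show $a_\psi \in \{\pm 1\}$ and $\alpha_\psi = 0$ (the coboundary term $f_\psi(\sigma^n x) - f_\psi(x)$ is exactly the ambiguity allowed by ``cohomologous'').

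To pin down $a_\psi$ and $\alpha_\psi$, I would evaluate the displayed identity on periodic points. If $x$ has stabilizer $L$ of finite index, then summing the cocycle identity around $L$ forces the coboundary contribution to vanish, giving $\Ht_\psi(x,n) = a_\psi \Ht(x,n) + \alpha_\psi(n)$ for all $n \in L$; equivalently $\Ht(\psi(x),n) = a_\psi \Ht(x,n) + \alpha_\psi(n)$. Now evaluate on a point $x$ of ``maximal slope'' as in Lemma \ref{lem:3CB_max_slope_points}: for such $x$, $\Ht(x,m) = m_1 + \dots + m_d$ for all $m$, and by the first part of that lemma there are exactly three such points, permuted among themselves (they form a single $\sigma$-orbit closure consisting of three fixed points of the $\ZD$-action) — in particular $\psi$ must send a maximal-slope point to a point which, restricted to the stabilizer (all of $\ZD$), again realizes $\pm(m_1+\dots+m_d)$, hence $a_\psi \cdot (m_1+\dots+m_d) + \alpha_\psi(m) = \pm(m_1+\dots+m_d)$ for all $m \in \ZD$. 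Running this for a maximal-slope point and also for its ``mirror'' (the point $\hat x \mapsto -\hat x$, of minimal slope) yields $a_\psi + \alpha_\psi(e_i)/1 = u$ and $-a_\psi + \alpha_\psi(e_i) = -u'$ type relations; comparing forces $\alpha_\psi(e_i)$ to be the same for all $i$ and, combined with integrality of $\Ht$ on periodic points of arbitrarily large period supplied by Lemma \ref{lem:3CB_densePP}, forces $\alpha_\psi = 0$ and $a_\psi = u_\psi \in \{\pm 1\}$. Finally, $a_\psi \ne 0$ because $\psi$ is invertible: $\Ht_{\psi^{-1}}$ is then given by the reciprocal multiple, and $a_\psi a_{\psi^{-1}} = 1$ with both in $\ZZ$ (being realized as ratios of integer height differences on periodic points) gives $a_\psi = \pm 1$.

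The main obstacle I expect is the bookkeeping showing $\alpha_\psi = 0$ and $a_\psi = \pm 1$ simultaneously, rather than merely $a_\psi + (\text{something}) = \pm 1$: one has to use several families of periodic points at once — the three maximal-slope points, the three minimal-slope points, and a dense supply of $k\ZD$-periodic points from Lemma \ref{lem:3CB_densePP} — to see that the homomorphism $\alpha_\psi$ cannot absorb any nontrivial part of the multiplier. The cleanest route is: restrict the identity $\Ht(\psi x, n) = a_\psi \Ht(x,n) + \alpha_\psi(n)$ (valid on any stabilizer) to the orbit of a maximal-slope point to get $a_\psi(m_1+\dots+m_d) + \alpha_\psi(m) \in \{\pm(m_1+\dots+m_d)\}$ as an identity of functions of $m$, which being linear forces $\alpha_\psi = (u_\psi - a_\psi)\cdot(\sum_i e_i^\vee)$ with $u_\psi = \pm 1$; then evaluate on a genuinely ``flat'' periodic point (one with $\Ht(x, ke_i) = 0$ for $k$ in its stabilizer, which exists densely by Lemma \ref{lem:3CB_densePP}) to get $\alpha_\psi(ke_i) \in \ZZ$ small, actually $0 = \Ht(\psi x, ke_i) = \alpha_\psi(ke_i)$, forcing $\alpha_\psi = 0$ and hence $a_\psi = u_\psi \in \{\pm 1\}$. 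Once $\alpha_\psi = 0$ the statement ``$\Ht_\psi$ is cohomologous to $u_\psi \Ht$'' is immediate from the displayed cohomology equation.
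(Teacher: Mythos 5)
Your first step coincides with the paper's: both proofs feed $\Ht_\psi$ into Proposition \ref{prop:cohomology_X_3} to get $\Ht_\psi \sim a_\psi\Ht + \alpha_\psi$, and the whole content is then to show $a_\psi\in\{\pm1\}$ and $\alpha_\psi=0$. From there the routes diverge: the paper never touches periodic points, but instead compares $\max_{x}\Ht(x,n)$ and $\min_x\Ht(x,n)$ (which are $\pm\|n\|_1$ up to normalization) with the corresponding extrema of $\Ht_\psi(\cdot,n)$, which coincide because $\psi$ is surjective; letting $\|n\|_1\to\infty$ kills the bounded coboundary and forces $|a_\psi|=1$, $\alpha_\psi=0$ in one stroke. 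Your periodic-point route can be made to work and is in the spirit of the periodic-point arguments used elsewhere in the paper, but two of your justifications are not correct as stated. First, the three maximal-slope points are \emph{not} fixed points of the $\ZD$-action (shifting by $e_i$ cycles them); they form a single orbit whose stabilizer is $L_0=\{m:\sum_i m_i\equiv 0 \bmod 3\}$, and the reason $\psi$ must carry a maximal-slope point to a point of slope $\pm(m_1+\cdots+m_d)$ is that the six extremal-slope points are \emph{exactly} the $L_0$-fixed points (any $L_0$-periodic coloring is a function of $\sum n_i\bmod 3$, forcing a bijection $\ZZ/3\ZZ\to\{0,1,2\}$), a set every automorphism permutes. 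You need to say this; without it the step ``$a_\psi(m_1+\cdots+m_d)+\alpha_\psi(m)\in\{\pm(m_1+\cdots+m_d)\}$'' has no basis. Second, your claim $0=\Ht(\psi x,ke_i)=\alpha_\psi(ke_i)$ for a flat periodic point $x$ presupposes that $\psi x$ is again flat; Lemma \ref{lem:3CB_densePP} gives density of periodic points but says nothing about flatness being preserved. The repair is the same kind of counting: the fixed points of the index-two sublattice $\{m:\sum_i m_i\in 2\ZZ\}$ are exactly the six flat points, so $\psi$ permutes them and $\Ht(\psi x,\cdot)$ vanishes on that sublattice, giving $\alpha_\psi(2e_i)=0$ and hence $\alpha_\psi=0$.

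Two smaller points. You cannot evaluate the identity at $m=e_i$ for an $L_0$-periodic point since $e_i\notin L_0$; this is cosmetic because a homomorphism on a finite-index subgroup of $\ZD$ determines its extension, but your ``$a_\psi+\alpha_\psi(e_i)=u$'' bookkeeping should be run over $m\in L_0$. Your closing observation that $a_{\psi}a_{\psi^{-1}}=1$ together with integrality of $a_\psi$ gives $a_\psi=\pm1$ is a legitimate third route to the multiplier (it uses that the coefficient $a$ in Proposition \ref{prop:cohomology_X_3} is well defined, i.e.\ that $\Ht$ is not cohomologous to a trivial cocycle, which the paper records). On balance: your approach is viable and genuinely different in its second half, but as written it has the two gaps above; the paper's max/min-plus-surjectivity argument gets both conclusions simultaneously without needing to classify any periodic points.
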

\begin{proof}

By Proposition \ref{prop:cohomology_X_3} above, there exists $a_\psi \in \RR$ so that $\Ht_\psi$ is cohomologous to $a_\psi \cdot \Ht + \alpha_\psi$, where $\alpha_\psi \in \Hom(\ZD,\mathbb{R})$ is a trivial cocycle.
Note that for any $n=(n_1,\ldots,n_d) \in \ZD$ such that $\|n\|_1$ is even
$$\max_{x \in C_3^d} \Ht(x, n) = 2\|n\|_1 \mbox{ and }\min_{x \in C_3^d}\Ht(x, n)=- 2\|n\|_1,  $$
So
$$\max_{x \in C_3^d} \Ht_\psi(x, n) = 2|a_\psi|\cdot\|n\|_1+\alpha_\psi(n)  \mbox{ and }\min_{x \in C_3^d} \Ht_\psi(x, n)= -2|a_\psi|\cdot\|n\|_1+ \alpha_\psi(n),  $$
On the other hand, since $\psi$ is surjective,
$$\max_{x \in C_3^d} \Ht(x, n) = \max_{x \in C_3^d} \Ht_\psi(x, n) \mbox{ and }\min_{x \in C_3^d}\Ht(x,n) = \min_{x \in C_3^d}\Ht_\psi(x,n),  $$
It follows that $a_\psi \in \{\pm 1\}$ and $\alpha_\psi(n)=0$.
\end{proof}

\noindent \emph{Concluding the proof of theorem \ref{thm:3CB_prime}}:

Suppose $C_3^d \cong Y \times Z$ is a non-trivial direct factorization.

By  Lemma \ref{lem:3CB_densePP} there exists a finite index subgroup $L \lhd \mathbb{Z}^d$  for which $(C_3^d)^L:= \bigcap_{n \in L}\{ x \in C_3^d:~ \sigma^n x = x \}$ is non-empty. Furthermore, we can choose $L$ so that $[\ZD:L]$ is odd. It follows that  $Z^L:= \bigcap_{n \in L}\{ z \in Z~:~ \sigma^n z = z \}$ is also non-empty. Choose $\hat{z} \in Z^L$, and a finite set $F_L \subset \ZD$ of representatives for $\ZD / L$. Define a cocycle $c:Y \times\mathbb{Z}^d \to \mathbb{R}$ by
$$c(y,n) := \sum_{m \in F_L}\Ht\left((y,\sigma^m \hat{z}),n\right),$$
where we naturally identify the pair $(y,\sigma^m \hat{z})$ is an element of $C_3^d$.
To check that $c$ is indeed a cocycle note that for any $n \in \ZD$,
$$\left\{\sigma^m\tilde z ~:~ m \in F_L \right\} = \left\{\sigma^{m+n}\tilde z ~:~ m \in F_L \right\},$$
because both are equal to the orbit of $\tilde z$ under $\sigma$.
Thus, for any $n_1,n_2 \in \ZD$
$$c(y,n_1+n_2) = \sum_{m \in F_L}\Ht\left((y,\sigma^m \hat{z}),n_1\right))+ \sum_{m \in F_L}\Ht\left((\sigma^{n_1}y,\sigma^{m+n_1} \hat{z}),n_2\right)= c(y,n_1)+ c(\sigma^{n_1}y,n_2)$$
Because  $(Y,\sigma)$ is a factor of $(C_3^d,\sigma)$, $c$ naturally lifts to a locally constant cocycle on $(C_3^d,\sigma)$.

From Proposition \ref{prop:cohomology_X_3} it  follows that there exists
$\phi = \phi_{\hat{z}} \in \Hom(\ZD,\RR)$ and $\alpha_{\hat{z}} \in \RR$ (both a priori depending on $\hat{z}$)
so that $c:C_3^d \times \ZD \to \RR$ is cohomologous to $\alpha_{\hat{z}} \Ht + \phi$.

Observe that for any $m \in \ZD$, the map $(y,z) \mapsto (y,\sigma^m(z))$ is an automorphism of $C_3^d$. It follows from Lemma \ref{lem:aut_on_3BC} that for any $m \in \ZD$ either
$\Ht((y,z),n)=\Ht((y,\sigma^m(z)),n)$ for all $y \in Y$, $z \in Z$ $n \in \ZD$ or
$\Ht((y,z),n)=-\Ht((y,\sigma^m(z)),n)$.This means that there is homomorphism $s \in \Hom(\ZD,\ZZ/2\ZZ)$ so that
$\Ht((y,z),n)= (-1)^{s(m)}\Ht((y,\sigma^m(z)),n)$.
Because $[\ZD : L]$ is odd and $Z^L$ non empty 
there is some $C \in \ZZ \setminus \{0\}$ and coboundary $b:C_3^d \times \ZD \to \mathbb{R}$
so that for any $z \in Z$ and $n \in \ZD$,
 $$\sum_{m \in F_L}\Ht((y,\sigma^m z),n)=  C \cdot \Ht((y, z),n)+ b((y,z),n)$$

Suppose first that there  \emph{exist} $L < \ZD$ and $\hat{z} \in Z^L$  as above so that $\alpha_{\hat{z}} \ne 0$, it follows that $\Ht:(Y \times Z) \times \ZD \to \mathbb{R}$ is cohomologous to a cocycle which only depends on $Y$, and so $\phi_{\hat{z}} = 0$ for all $\hat{z}$. Thus $\alpha_{z} = \alpha_{\hat{z}} \ne 0$  for \emph{any} point $z \in C_3^d$ with finite orbit.

Choose $k \in 3\ZZ$, and  let $\tilde{L}= k \ZD$. Let $\tilde{y} \in Y$, $\tilde{z} \in Z$ be such that $(\tilde{y},\tilde{z}) \cong  x \in C_3^d$ satisfies $\sigma_{ke_i} x= x$  and $\Ht(x,ke_i)=k$  for $i=1,\ldots,d$, as in Lemma \ref{lem:3CB_max_slope_points}.
Since $\Ht$ is cohomologous to cocycle which does not depend on $z$,
it follows that $\Ht((\tilde{y},z),ke_i)= k$  for all $z \in Z^{\tilde{L}}$. From Lemma \ref{lem:3CB_max_slope_points}
we conclude that there are at most $3$  points in $\bigcup_{L}Z^{L}$, where the union is over all finite index subgroups $L$ such that $[\ZD:L]$ is odd.
By Lemma a \ref{lem:3CB_densePP}
$\bigcup_{L}(C_3^d)^L$
is a dense in $C_3^d$. It follows that
$\bigcup_{L}Z^L$
is dense in $Z$, so $Z$ must be finite.
Because  $C_3^d$ is topologically mixing, it  has only trivial finite  factors.
This implies $Z$ is a trivial one-point system.

Otherwise, $\alpha_{\hat{z}} = 0$ for all $\hat{z} \in Z^L$ with $[\ZD:L]$ odd. In this case it follows  that $\Ht$ is cohomologous to a cocycle which only depends on $Z$. Replacing the roles of $Y$ and $Z$, we conclude using Lemma \ref{lem:3CB_densePP} as in the previous case that $Y$ is finite, hence trivial.

\section{Direct-primeness for Dyck shifts}
\label{sec:dyck_factorization}

Dyck shifts are a one parameter class of  non-sofic $\ZZ$-subshifts.
They were introduced by Krieger  in \cite{krieger_74}, as a counterexample to a conjecture of Weiss, and appeared in various papers in the literature since.
We now recall a definition of the Dyck shifts:

 Let $N>1$ be a natural number.
 Write $\Sigma_N = \left(\{\alpha_1,\ldots,\alpha_N\}\cup \{\beta_1,\ldots,\beta_N\}\right)$. Consider the monoid $M$ generated by $\Sigma_N \cup \{0\}$ subject to the following relations:
\begin{enumerate}
\item $\alpha_i \beta_i= 1$ for $i \in \{1,\ldots N\}$.
\item $\alpha_i \beta_j = 0$ for $i \ne j$, $i,j \in \{1,\ldots N\}$.
\end{enumerate}

The $N$-Dyck shift is defined by:
$$D_N = \{ x \in \Sigma_N^\ZZ ~:~ x_{n}\cdot x_{n+1}\cdot \ldots \cdot x_{n+k}\neq  0 ~\forall n\in \ZZ, k \in \NN\}.$$

Informally, if we think of $\{\alpha_1,\ldots,\alpha_N\}$ as $N$ types of ``left brackets'' and of $\{\beta_1,\ldots,\beta_N\}$ as $N$ corresponding ``right brackets'', $D_N$ consists of all bi-sequences with no ``mismatching pairs of brackets''.

In this section we prove the following:

\begin{thm}\label{thm:dyck_prime}
For any prime number $N$,  the $N$-Dyck shift $D_N$ is topologically direct prime.
\end{thm}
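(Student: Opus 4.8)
The plan is to mimic the ``essence of Hansel's proof'' alluded to after Theorem \ref{thm:full_shift_direct_prime}: detect direct-primeness of $D_N$ through periodic point counts, exploiting that $N$ is prime. First I would record the relevant combinatorial data: the number $p_k(D_N)$ of points in $D_N$ fixed by $\sigma^k$, equivalently the number of $\sigma$-periodic points of period dividing $k$. The key arithmetic fact I expect to use is a formula (or at least a congruence) for these counts as polynomials in $N$; in particular, because $D_N$ has two measures of maximal entropy and $h(D_N)=\log(N+1)$, the dominant term of $p_k(D_N)$ should be governed by $(N+1)^k$, but the finer structure — and crucially the count of points of \emph{least} period $k$ for small $k$ — should be expressible in terms of $N$ in a way that becomes rigid modulo $N$ when $N$ is prime. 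I would also bring in the structure of the measures of maximal entropy (the lack of intrinsic ergodicity): the two ergodic measures of maximal entropy are carried, roughly, by the ``all left brackets'' and ``all right brackets'' regions, and this asymmetry restricts how $D_N$ could split as a product.

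Next, suppose for contradiction that $D_N \cong Y \times Z$ with both factors nontrivial. By Proposition \ref{prop:SFT_direct_factor}'s analogue for subshifts (the factorization proposition citing \cite[Section $6$]{lind_entropies_markov_shifts_1984}), $Y$ and $Z$ are $\ZZ$-subshifts, and by additivity of topological entropy along products, $h(Y)+h(Z)=\log(N+1)$. I would then use that each of $Y,Z$ inherits, from the two ergodic measures of maximal entropy on $D_N$ and their product structure, constraints on its own measures of maximal entropy and on its periodic point growth: counting fixed points of $\sigma^k$ multiplicatively, $p_k(D_N)=p_k(Y)\cdot p_k(Z)$ for every $k$. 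The arithmetic heart of the argument is to show this multiplicative factorization of the integer sequence $\bigl(p_k(D_N)\bigr)_{k\ge1}$ is impossible when $N$ is prime unless one factor is trivial: one extracts enough low-$k$ values (say $p_1$, $p_2$, $p_3$), observes that $p_1(D_N)=N+1$ hmm wait — actually $p_1(D_N)=N$ (the $N$ constant sequences on a single left-or-matched symbol — I'd need to recount: the fixed points of $\sigma$ are constant sequences $x_n\equiv a$, and $a\cdot a\cdots\ne 0$ forces $a\in\{\alpha_i\}$, giving exactly $N$ fixed points). So $p_1(Y)\cdot p_1(Z)=N$ with $N$ prime forces $\{p_1(Y),p_1(Z)\}=\{1,N\}$, say $p_1(Z)=1$. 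Then I push this: a subshift with a single fixed point but positive entropy is possible in general, so I must use the finer counts $p_k$ together with the entropy split and the maximal-measure structure to force $h(Z)=0$ and then $Z$ trivial.

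The main obstacle, as I see it, is exactly that last step: ruling out a ``small but nontrivial'' factor $Z$. Knowing $p_1(Z)=1$ is not by itself enough; one needs that the periodic-point sequence of $D_N$ is, in a suitable sense, \emph{irreducible} as a multiplicative object, and this is where the prime hypothesis on $N$ and the detailed arithmetic of $p_k(D_N)$ must be combined — presumably via an explicit generating-function / zeta-function computation for $D_N$ (Dyck shifts have a known zeta function, rational after a substitution), reduced modulo $N$. I would therefore: (i) write down the zeta function $\zeta_{D_N}(t)=\exp\bigl(\sum_k p_k(D_N)t^k/k\bigr)$ explicitly in terms of $N$; (ii) observe that $\zeta_{D_N}$ must factor as $\zeta_Y\cdot\zeta_Z$ with both factors of the same special form (power series with nonnegative integer ``periodic point'' exponents); (iii) use the prime factorization structure of the coefficients — anchored by $p_1(D_N)=N$ prime — to conclude one factor is $\exp(0)=1$, i.e.\ $Z$ has no periodic points, contradicting that $\ZZ$-subshifts which are direct factors here must (by mixing/density-of-periodic-points considerations inherited from $D_N$) have periodic points. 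Combined with the reduction above, this yields that $Z$ is trivial, completing the proof that $D_N$ is topologically direct prime. I expect steps (i)–(ii) to be routine and step (iii) to be the crux, requiring the genuinely number-theoretic input that $N$ is prime.
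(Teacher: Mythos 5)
Your proposal has the right flavor (Hansel-style periodic point counting combined with the two measures of maximal entropy), but it contains a concrete error and, more importantly, is missing the idea that makes the counting work. The error: $p_1(D_N)\ne N$. The constant sequences $\beta_i\beta_i\cdots$ are also legal, since no relation in the Dyck monoid annihilates a product of right brackets; hence $D_N$ has exactly $2N$ fixed points, and $2N$ is never prime, so your anchor $p_1(Y)\cdot p_1(Z)=N$ collapses (one could have $p_1(Y)=2$, $p_1(Z)=N$). More generally the \emph{total} counts $p_k(D_N)$ (e.g.\ $p_2(D_N)=2N^2+2N$) do not have the rigid power-of-$N$ arithmetic that your step (iii) requires, and the zeta function of $D_N$ is algebraic but not rational, so the $\otimes$-factorization route that works for full shifts does not transfer.

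The missing idea is a \emph{refinement} of the periodic point counts, justified measure-theoretically. The paper attaches to each periodic point $x$ the local entropies $h_\pm(x)=\lim_k\frac1k\log\mu_\pm([x_1,\ldots,x_k])$, computes $h_\pm(x)=\log(N+1)\pm A(x)\log N$ with $A(x)$ the normalized excess of unmatched brackets, and then uses that $\mu_\pm$ are Bernoulli together with an ``intrinsic disjointness'' lemma (any entropy-maximizing joining of Bernoulli systems is the product joining) to show that in a splitting $D_N\cong Y\times Z$ one factor, say $Z$, is intrinsically ergodic, that $\mu_\pm=\nu_\pm\times\eta$, and that $h_\pm$ is additive with the $Z$-contribution constant on periodic points. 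This yields the stratified identity $|D_N^{(n,c)}|=|Y^{(n,c)}|\cdot|Z^{(n)}|$, and the stratum $c=1$ (all left brackets) has exactly $N^n$ points --- a clean power of the prime $N$ on which Hansel's least-period argument modulo $N$ does work, giving $|Z^{(N^k)}|=1$ for all $k$ (after ruling out the case $|Z^{(1)}|=N$ by comparing the strata $c=1$ and $c=0$). Even then you are not done: a subshift can have a single periodic orbit yet positive entropy, so one still needs that periodic point growth determines entropy for direct factors of $D_N$ (forcing $h(Z)=0$) and that $D_N$ has completely positive entropy, being a coded system, whence $Z$ is trivial. Your proposed endgame --- ``$Z$ has no periodic points, contradicting density of periodic points'' --- aims at the wrong target, since the count argument can only ever give one fixed point, not zero.
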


The assumption that $N$ is prime seems to be an artifact of the proof method.

Let us introduce auxiliary definitions.
Following \cite{krieger_74}, define two continuous shift-commuting maps
$\pi_+,\pi_-:D_N \to \{0,\ldots,N\}^\ZZ$ by
$$\pi_+(x)_n = \begin{cases}
i & x_n = \alpha_i\\
0 & x_n \in \{\beta_1,\ldots,\beta_N\}
\end{cases}
$$

$$\pi_-(x)_n = \begin{cases}
i & x_n = \beta_i\\
0 & x_n \in \{\alpha_1,\ldots,\alpha_N\}
\end{cases}
$$

Denote by $\nu_N$ the uniform Bernoulli measure on $\{0,\ldots,N\}^\mathbb{Z}$, which is uniquely defined by
$\nu_N ([a]_k) = \frac{1}{N^m}$ for all $a= (a_1,\ldots,a_m) \in \{0,\ldots,N\}^m$.
It was  observed  in \cite{krieger_74} that there is a shift-invariant Borel set $X_0 \subset\{0,\ldots,N\}^\mathbb{Z}$ of full $\nu_M$-measure so that any $x \in X_0$ has a unique pre-image  under $\pi_-$ and   a unique pre-image under $\pi_+$ .

A dynamical system is called \emph{intrinsically ergodic} if it admits a unique measure of maximal entropy.
One interesting feature of Dyck shifts, which was discovered in \cite{krieger_74}, is that they are not intrinsically ergodic:
\begin{thm}\label{thm:dyck_2_mme} \emph{(\cite[Theorem $3$]{krieger_74})}
For any $N \ge 2$ there exists precisely $2$ ergodic measures of maximal entropy $\mu_+$ and $\mu_-$ for the Dyck shift $D_N$. The measures $\mu_+$ and $\mu_-$ are the pull-back of the uniform Bernoulli measure on $\{0,\ldots,N\}^\mathbb{Z}$ via $\pi_{+}$ and $\pi_{-}$ respectively.
\end{thm}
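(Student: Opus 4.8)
The plan is to establish the two measures and then uniqueness, in both cases through the two factor maps $\pi_+,\pi_-:D_N\to\{0,\ldots,N\}^\ZZ$ together with the Abramov--Rokhlin relative entropy formula. Recall that for a measure-preserving factor map $\pi:(X,\mu,\sigma)\to(Y,(\pi)_*\mu,\sigma)$ one has $h_\mu(\sigma)=h_{(\pi)_*\mu}(\sigma)+h_\mu(\sigma\mid\pi)$, where the relative entropy $h_\mu(\sigma\mid\pi)$ vanishes precisely when $\pi$ is $\mu$-almost everywhere injective. For the \emph{existence} half I would use the set $X_0$ of full $\nu_N$-measure on which $\pi_+$ and $\pi_-$ are injective, supplied before the statement: by definition $\mu_{\pm}$ is the unique lift of $\nu_N$ through $\pi_{\pm}$, so $\pi_{\pm}$ is a measure isomorphism of $(D_N,\mu_{\pm})$ onto the uniform Bernoulli system $(\{0,\ldots,N\}^\ZZ,\nu_N)$. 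Consequently $\mu_+$ and $\mu_-$ are ergodic (in fact Bernoulli) and $h_{\mu_\pm}(\sigma)=h_{\nu_N}(\sigma)=\log(N+1)$; they are distinct because, as explained below, they have opposite ``bracket drift''.

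The core is a relative-entropy estimate from which both the entropy upper bound and uniqueness follow. For an ergodic shift-invariant $\mu$ on $D_N$ set $\rho=\mu(\{x:x_0\in\{\alpha_1,\ldots,\alpha_N\}\})-\mu(\{x:x_0\in\{\beta_1,\ldots,\beta_N\}\})$, the asymptotic excess of left over right brackets. I claim $\rho\ge 0$ forces $h_\mu(\sigma\mid\pi_+)=0$, and symmetrically $\rho\le 0$ forces $h_\mu(\sigma\mid\pi_-)=0$. To see this, encode the bracket pattern of $x$ as the stationary walk $W_n=\#\{\text{left brackets in }[0,n)\}-\#\{\text{right brackets in }[0,n)\}$. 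A right bracket at position $k$ fails to be matched by a left bracket to its left exactly when $\sigma$ reads a new historical minimum of $W$ there, i.e. when $W_{k+1}<\inf_{m\le k}W_m$. When $\rho>0$ the walk tends to $-\infty$ as $m\to-\infty$, and when $\rho=0$ the zero-mean cocycle is recurrent, so by Atkinson's recurrence theorem $\liminf_{m\to-\infty}W_m=-\infty$ almost surely; in either case $\inf_{m\le k}W_m=-\infty$ for every $k$, no new minimum is ever attained, and \emph{every} right bracket is matched to the left. Hence the types of all right brackets are determined by $\pi_+(x)$, so $\pi_+$ is $\mu$-a.e. injective and $h_\mu(\sigma\mid\pi_+)=0$.

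Granting the claim, the entropy value and uniqueness follow quickly. For any ergodic $\mu$, assume without loss of generality $\rho\ge0$; then $h_\mu(\sigma)=h_{(\pi_+)_*\mu}(\sigma)\le\log(N+1)$ because $(\pi_+)_*\mu$ lives on the full shift on $N+1$ symbols, so $h(D_N)=\log(N+1)$ and both $\mu_+,\mu_-$ are measures of maximal entropy. If moreover $\mu$ is of maximal entropy, equality forces $(\pi_+)_*\mu$ to be a measure of maximal entropy for that full shift; by intrinsic ergodicity of the full shift this measure is $\nu_N$, and by a.e. injectivity of $\pi_+$ its unique lift is $\mu=\mu_+$. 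The case $\rho\le0$ yields $\mu=\mu_-$ identically. Since $\mu_+$ has $\rho=(N-1)/(N+1)>0$ and $\mu_-$ has $\rho<0$, these are genuinely the only two ergodic measures of maximal entropy.

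I expect the main obstacle to be the almost-sure ``no unmatched right bracket'' statement in the bi-infinite, purely measure-theoretic setting: the random-walk heuristic is transparent, but making it rigorous for an arbitrary ergodic $\mu$ (rather than an i.i.d. bracket pattern) is exactly where Atkinson's recurrence theorem for zero-mean cocycles is required in the borderline case $\rho=0$, and one must also confirm that $h_\mu(\sigma\mid\pi_+)$ is correctly identified with the (vanishing) entropy of the unmatched-bracket types. A secondary point needing care is checking that the lifted measures $\mu_\pm$ are genuinely shift-invariant and that the injectivity set $X_0$ is $\sigma$-invariant, which is precisely where the observation preceding the theorem is invoked.
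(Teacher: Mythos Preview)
The paper does not give its own proof of this theorem: it is quoted verbatim as Krieger's result \cite[Theorem~3]{krieger_74} and used as a black box, so there is no argument here to compare your proposal against. Your outline is in fact close in spirit to Krieger's original approach --- reduce to the full $(N+1)$-shift via one of the maps $\pi_\pm$ according to the sign of the bracket drift, and use intrinsic ergodicity of the full shift.

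Two small points are worth tightening. First, the parenthetical ``$h_\mu(\sigma\mid\pi)$ vanishes precisely when $\pi$ is $\mu$-a.e.\ injective'' is not true in general (zero relative entropy does not force a.e.\ injectivity); fortunately you only use the correct implication, namely that a.e.\ injectivity gives zero relative entropy, so this does not damage the argument. Second, Atkinson's theorem gives $\liminf_{m\to-\infty}|W_m|=0$, i.e.\ recurrence, not the stronger oscillation statement $\liminf_{m\to-\infty}W_m=-\infty$ that you need in the case $\rho=0$. For $\pm 1$-valued ergodic stationary increments with zero mean the oscillation statement is indeed true (one can deduce it from recurrence together with stationarity, or cite the Kesten--Tanny oscillation theorem for zero-mean ergodic cocycles), but you should invoke that explicitly rather than Atkinson alone. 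With those two clarifications your sketch goes through.
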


Recall that a \emph{joining} of two probability preserving $\GG$-actions $(X,\mathcal{B},\mu,T)$ and $(Y,\mathcal{C},\nu,S)$ is a
probability measure $\lambda$ on $(X\times,\mathcal{B}\otimes \mathcal{C})$ which is $T \times S$-invariant and has $\mu= \lambda \circ \pi_X^{-1}$ and $\nu= \lambda \circ \pi_Y^{-1}$  where  $\pi_X :X\times Y \to X$ and $\pi_Y:X \times Y \to Y$ are the obvious projection maps. $(X,\mathcal{B},\mu,T)$ and $(Y,\mathcal{C},\nu,S)$ are \emph{disjoint} if the only joining of the systems is the independent joining $\lambda = \mu \times \nu$.

 We say that a pair of   probability preserving actions of an amenable group $\GG$  $(X,\mathcal{B},\mu,T)$ and $(Y,\mathcal{C},\nu,S)$ are \emph{intrinsically disjoint} if the independent joining is the only joining which maximizes the entropy.

We use this term to state the following simple Lemma:

\begin{lem}\label{lem:bernoullis_intrinsically_disjoint} \emph{ (Bernoulli transformations are ``pairwise intrinsically disjoint'')}
Let $(X,\mathcal{B},\mu,T)$ and $(Y,\mathcal{C},\nu,S)$ be Bernoulli transformations with finite entropy, and $\lambda$ a joining of the two such that
$h_\lambda(T \times S) = h_\mu(T) + h_\nu(S)$. Then $\lambda = \mu \times \nu$ is the independent joining.
\end{lem}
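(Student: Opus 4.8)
The plan is to reduce the statement to a known rigidity property of Bernoulli joinings. The natural route is via relative entropy / conditional entropy and the Pinsker-type characterization: a joining $\lambda$ of $(X,\mathcal B,\mu,T)$ and $(Y,\mathcal C,\nu,S)$ satisfies $h_\lambda(T\times S)\le h_\mu(T)+h_\nu(S)$ always, with the deficit measured by the conditional entropy $h_\mu(T)-h_\lambda(T\times S\mid \mathcal C)$, where $\mathcal C$ is regarded as a factor of the joint system. So the hypothesis $h_\lambda(T\times S)=h_\mu(T)+h_\nu(S)$ is equivalent to saying that the $X$-coordinate process is relatively independent of the $Y$-coordinate, i.e. $h(\mathcal B\mid \mathcal C)=h_\mu(T)$ in the joint system. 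First I would set up this identity carefully, using the Abramov–Rokhlin addition formula for the entropy of the joining over the amenable group $\GG$ (valid since both systems have finite entropy).

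Next I would invoke the defining property of a Bernoulli system: for $(X,\mathcal B,\mu,T)$ Bernoulli with generating independent partition $P$, the past $\sigma$-algebra refines to give $h_\mu(T)=H_\mu(P\mid P^-)$ where $P^-$ is the $T$-invariant $\sigma$-algebra generated by the strict past (for $\GG=\ZZ$; for a general amenable $\GG$ I would instead use that a Bernoulli system has completely positive entropy and that its Pinsker factor is trivial, together with the fact that any factor of a Bernoulli shift carrying full entropy of the generating partition must coincide with the whole $\sigma$-algebra up to null sets). The key step: having $h(\mathcal B\mid\mathcal C)=h_\mu(T)=H_\mu(P)$ forces, coordinate by coordinate, the partition $P$ (viewed inside the joint system) to be independent of $\mathcal C$ together with the entire past of $P$; iterating and using that $P$ generates $\mathcal B$, one concludes $\mathcal B$ and $\mathcal C$ are independent under $\lambda$, i.e. $\lambda=\mu\times\nu$.

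Concretely, the argument I would write runs: let $P=\{P_1,\dots,P_k\}$ be the independent generator for $X$ and $Q$ the independent generator for $Y$. In the joint system the entropy rate of the $P$-process is $H_\mu(P)$ (it is a factor of $X$), and the entropy rate of the $(P,Q)$-process is $H_\mu(P)+H_\nu(Q)$ by hypothesis combined with the Abramov–Rokhlin formula. Subtracting, the $Q$-process has entropy rate $H_\nu(Q)$ conditioned on the full $P$-process. Since $Q$ is an independent generator, $H_\nu(Q)$ is also its unconditional entropy rate, and conditioning can only decrease entropy; equality in $H(Q\text{-process}\mid P\text{-process})=H(Q\text{-process})$ forces the two processes to be independent. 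As both are generators, $\lambda=\mu\times\nu$.

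The main obstacle I anticipate is making the ``equality in conditional entropy forces independence of the whole processes'' step fully rigorous over a general amenable $\GG$ rather than just $\ZZ$: for $\ZZ$ one can cleanly peel off one coordinate at a time along the natural order, but for a general amenable group one must argue with Følner sequences and the superadditivity/subadditivity of the relevant entropy functionals, and then pass to the limit. I would handle this by phrasing everything in terms of the static quantities $\frac{1}{|F_n|}H_\lambda\!\big(P^{F_n}\mid Q^{F_n}\big)$ versus $\frac{1}{|F_n|}H_\mu\!\big(P^{F_n}\big)$ along a Følner sequence $(F_n)$, noting that the per-site defect is nonnegative and that its Cesàro limit vanishing forces $H_\lambda(P^{F}\mid Q^{\GG})=H_\mu(P^F)$ for every finite $F$, whence $P^F$ and $Q^{\GG}$ are independent for all finite $F$; letting $F\uparrow\GG$ and using that $P$, $Q$ are generators yields $\mathcal B\perp\mathcal C$ under $\lambda$. (If one is content with $\GG=\ZZ$, which suffices for the application to Dyck shifts, the one-coordinate-at-a-time argument via the Kolmogorov–Sinai theorem and the chain rule for conditional entropy is entirely routine.)
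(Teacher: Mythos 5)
Your proposal is correct and follows essentially the same route as the paper: fix independent generators $P,Q$, observe that the normalized entropy of the joint $(P,Q)$-process is bounded above by $H_\mu(P)+H_\nu(Q)$ at every finite scale and bounded below by its limit $h_\lambda$ (Fekete/subadditivity), so the hypothesis forces equality at every finite block, and equality in the static subadditivity of entropy forces joint independence of all the iterates, hence $\lambda=\mu\times\nu$. Your final paragraph correctly identifies that the entropy-rate identity alone does not suffice and supplies the same finite-scale superadditivity fix the paper uses, so no further changes are needed (the paper, like your shorter variant, only treats $\GG=\ZZ$).
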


\begin{proof}
Let $\alpha \subset \mathcal{B}$ and $\beta \subset \mathcal{C}$ be finite partitions which are independent generators for $T$ and $S$ respectively. This is equivalent to the statement that for any $n \in \NN$,
$$\frac{1}{n}H_\mu(\bigvee_{k=0}^{n-1} T^{-k}\alpha) = H_\mu(\alpha) =h_\mu(X,T),$$
$$\frac{1}{n}H_\nu(\bigvee_{k=0}^{n-1}T^{-k}\beta) = H_\nu(\beta) =h_\mu(Y,S).$$
Since $\alpha \vee \beta$ is a two-sided generator, it follows that
$$h_\lambda(X\times Y, T \times S)= \lim_{n \to \infty}\frac{1}{n}H_\lambda(\bigvee_{k=0}^n T^{-k}\alpha \vee \bigvee_{k=0}^n T^{-k}\beta).$$

By subadditivity,  for any $m \ge 1$,
$$\frac{1}{m}H_\lambda(\bigvee_{k=0}^{m-1} T^{-k}\alpha \vee \bigvee_{k=0}^{m-1} T^{-k}\beta) \ge \lim_{n \to \infty}\frac{1}{n}H_\lambda(\bigvee_{k=0}^{n-1} T^{-k}\alpha \vee \bigvee_{k=0}^{n-1} T^{-k}\beta).$$

Thus $h_\lambda(X\times Y, T \times S)= H_\nu(\beta) + H_\mu(\alpha)$ if and only if for every $n \ge 1$,
$$ \frac{1}{n}H_\lambda(\bigvee_{k=0}^n T^{-k}\alpha \vee \bigvee_{k=0}^{n-1} T^{-k}\beta) = H_\nu(\beta) + H_\mu(\alpha).$$
This equality can hold if and only if  $\{ T^{-k}\alpha, T^{-j}\beta\}_{k,j \in \ZZ}$ are jointly independent. It follows that $\lambda$ is the independent joining .
\end{proof}
\begin{lem}\label{lem:dyck_factor_interisic_ergodic}
Suppose $D_N \cong Y \times Z$ is a direct topological factorization, then precisely one of the direct factors $Y$ and $Z$ is intrinsically ergodic and the other has precisely two ergodic measures of maximal entropy.
\end{lem}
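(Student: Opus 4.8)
The plan is to combine Krieger's Theorem~\ref{thm:dyck_2_mme} with Lemma~\ref{lem:bernoullis_intrinsically_disjoint} and the elementary behaviour of measures of maximal entropy under products. First I would record two standard facts about a product subshift $(Y\times Z,\sigma)$: since $h_{\mathrm{top}}(Y\times Z)=h_{\mathrm{top}}(Y)+h_{\mathrm{top}}(Z)$ and $h_\lambda(\sigma)\le h_{\lambda\circ\pi_Y^{-1}}(\sigma)+h_{\lambda\circ\pi_Z^{-1}}(\sigma)$ for any joining $\lambda$, a measure $\lambda$ on $Y\times Z$ is of maximal entropy if and only if both of its marginals are of maximal entropy and $h_\lambda(\sigma)=h_{\lambda\circ\pi_Y^{-1}}(\sigma)+h_{\lambda\circ\pi_Z^{-1}}(\sigma)$. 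In particular, the product $\mu\times\nu$ of measures of maximal entropy of $Y$ and of $Z$ is a measure of maximal entropy of $Y\times Z$, and conversely every measure of maximal entropy of $Y\times Z$ projects to measures of maximal entropy of $Y$ and of $Z$.

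Next, transport the two ergodic measures of maximal entropy $\mu_+,\mu_-$ of $D_N$ to $Y\times Z$ via the conjugacy, and let $\mu_\pm^Y,\mu_\pm^Z$ be their marginals on $Y$ and on $Z$; by the previous paragraph these are ergodic measures of maximal entropy of $Y$ and of $Z$. Since $\pi_+$ and $\pi_-$ restrict to isomorphisms over a set of full $\nu_N$-measure, the systems $(D_N,\mu_\pm,\sigma)$ are measurably isomorphic to the uniform Bernoulli shift on $N+1$ symbols, hence Bernoulli; invoking Ornstein's theorem that a factor of a Bernoulli system is Bernoulli, all four marginals $\mu_\pm^Y,\mu_\pm^Z$ are Bernoulli. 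Viewing $\mu_\pm$ as a joining of $\mu_\pm^Y$ and $\mu_\pm^Z$ which, being of maximal entropy, satisfies $h_{\mu_\pm}(\sigma)=h_{\mu_\pm^Y}(\sigma)+h_{\mu_\pm^Z}(\sigma)$, Lemma~\ref{lem:bernoullis_intrinsically_disjoint} yields $\mu_\pm=\mu_\pm^Y\times\mu_\pm^Z$.

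Now I would run the counting argument. Since $\mu_+\ne\mu_-$, at least one of $\mu_+^Y\ne\mu_-^Y$ or $\mu_+^Z\ne\mu_-^Z$ holds; but if both held, then $\mu_+^Y\times\mu_-^Z$ would be a measure of maximal entropy of $Y\times Z$ which is ergodic (a product of the weakly mixing $\mu_+^Y$ with an ergodic measure) and distinct from $\mu_+^Y\times\mu_+^Z=\mu_+$ and from $\mu_-^Y\times\mu_-^Z=\mu_-$, contradicting that $Y\times Z\cong D_N$ has exactly two ergodic measures of maximal entropy. Hence exactly one holds; say $\mu_+^Y\ne\mu_-^Y$ while $\mu_+^Z=\mu_-^Z=:\nu_0$, so that $\mu_+^Y\times\nu_0$ and $\mu_-^Y\times\nu_0$ are precisely the two ergodic measures of maximal entropy of $Y\times Z$. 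If $\nu_1$ is any ergodic measure of maximal entropy of $Z$, then $\mu_+^Y\times\nu_1$ is ergodic (as $\mu_+^Y$, a factor of a Bernoulli system, is weakly mixing) and of maximal entropy, hence equals $\mu_+^Y\times\nu_0$ by comparing $Y$-marginals, so $\nu_1=\nu_0$ and $Z$ is intrinsically ergodic. Symmetrically, if $\mu_2$ is any ergodic measure of maximal entropy of $Y$, then $\mu_2\times\nu_0$ is ergodic (as $\nu_0$ is weakly mixing) and of maximal entropy, hence $\mu_2\in\{\mu_+^Y,\mu_-^Y\}$, so $Y$ has exactly two ergodic measures of maximal entropy.

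The only nontrivial external input, and the step I expect to be the main obstacle to state cleanly, is the identification of the marginals $\mu_\pm^Y,\mu_\pm^Z$ as Bernoulli, which relies on Ornstein's isomorphism theory (a factor of a Bernoulli system is Bernoulli); once that is in place, Lemma~\ref{lem:bernoullis_intrinsically_disjoint} converts entropy-additivity of the joining into a genuine product decomposition, and the remainder is bookkeeping with the product structure together with the standard fact that a product of a weakly mixing system with an ergodic one is ergodic.
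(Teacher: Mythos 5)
Your proof is correct and follows essentially the same route as the paper: project $\mu_\pm$ to the factors, note the marginals are Bernoulli (Ornstein), apply Lemma~\ref{lem:bernoullis_intrinsically_disjoint} to the entropy-additive joining to get $\mu_\pm=\mu_\pm^Y\times\mu_\pm^Z$, and then count ergodic measures of maximal entropy against Krieger's Theorem~\ref{thm:dyck_2_mme}. Your final bookkeeping (ruling out that both pairs of marginals differ, and verifying that the factor with coinciding marginals is genuinely intrinsically ergodic while the other has exactly two ergodic measures of maximal entropy) is actually spelled out more completely than in the paper, which compresses this into a single sentence.
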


\begin{proof}
Denote by $\nu_+$ and $\eta_+$ the projection of $\mu_+$ onto $Y$ and $Z$ respectively. Since the $(D_N,\sigma,\mu_+)$ is isomorphic to a Bernoulli shift, it follows that so are $(X,\sigma,\nu_+)$ and $(Y,\sigma,\eta_+)$. We have:
$$h_{\mu_+}(D_N,\sigma) \le h_{\nu_+}(Y,\sigma)+h_{\eta_+}(Z,\sigma) = h_{\nu_+ \times \eta_+}(Y \times Z, \sigma).$$
Since $\mu_+$ is  a measure of maximal entropy, the inequality must be an equality, and so by Lemma \ref{lem:bernoullis_intrinsically_disjoint} $\mu_+ = \nu_+ \times \eta_+$. Similarly, $\mu_- = \nu_- \times \eta_-$.
Since the four combinations $\nu_{\pm} \times \eta_{\pm}$ give precisely two ergodic measures on $D_N$, it follows that either $\nu_+ \ne \nu_-$ and $\eta_+ = \eta_-$ or vice versa.
\end{proof}

\begin{remark} In general, any direct factor of an intrinsically ergodic system is  intrinsically ergodic.
However, there are intrinsically ergodic homeomorphisms  $T$ and $S$ such that $T \times S$ is not intrinsically ergodic (for instance, this is the case if the measures of maximal entropy for $T$ and $S$ have a common zero-entropy factor).
\end{remark}

 It is obvious that any two systems admitting a non-trivial common zero entropy factor are not intrinsically disjoint.

\begin{question}
Are any two $K$-systems intrinsically disjoint?
\end{question}

From now on we assume $D_N \cong Y \times Z$ is a direct topological factorization, realized by  a shift-equivariant surjective homeomorphism $\Phi:Y \times Z \to D_N$. By the preceding lemma, we also assume  without loss of generality that $Z$ is intrinsically ergodic.
Our goal is to show $Z$ is the trivial one point system.
Denote by $\nu_+$ and $\nu_-$ the projections of $\mu_+$ and $\mu_-$ onto $Y$ and let $\eta$ denote the unique measure of maximal entropy for $Z$. It follows from Lemma \ref{lem:dyck_factor_interisic_ergodic} that $\mu_+ \cong \nu_+ \times \eta$ and $\mu_- \cong \nu_- \times \eta$.

For a subshift $(X,\sigma)$ and $n \in \NN$, let $X^{(n)}:=\{ x \in X~:~ \sigma^n(x)=x\}$ denote the $n$-periodic points of $X$.

\begin{lem}\label{lem:dyck_local_entropy_pp}
For any $n \in \NN$ and  $x \in D_N^{(n)}$ the limits
\begin{equation}\label{eq:dyck_local_entropy}
h_{\pm}(x):= \lim_{k \to \infty} \frac{1}{k} \log \mu_\pm ([x_1,\ldots,x_k])
\end{equation}
exist and are given by:
\begin{equation}\label{eq:h_pm_A_formula}
h_\pm(x) = \log(N+1) \pm A(x)\log(N)
\end{equation}
where
\begin{equation}\label{eq:A_def}
A(x) := \min\left(0,\frac{1}{n}\left(\left|\{1\le  j \le n ~:~ \pi_+(x)_j=0\}\right| - \left|\{1 \le j \le k ~:~ \pi_+(x)_j\ne 0\}\right|\right)\right)
\end{equation}

\end{lem}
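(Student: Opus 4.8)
The plan is to compute the asymptotics of $\mu_\pm([x_1,\ldots,x_k])$ directly from the description of $\mu_\pm$ as the pushforward of the uniform Bernoulli measure $\nu_N$ under $\pi_\pm$. I will treat $\mu_+$; the case of $\mu_-$ is symmetric under interchanging the roles of $\alpha$'s and $\beta$'s. Fix $n$ and $x\in D_N^{(n)}$, so $x$ is $n$-periodic, and fix a window length $k$; write $k = qn + r$ with $0\le r < n$. The key observation is that $\mu_+([x_1,\ldots,x_k])$ equals $\nu_N$ of the set of $w\in\{0,\ldots,N\}^{\{1,\ldots,k\}}$ with the property that $w$ is the $\pi_+$-image of a \emph{legal} completion of $x|_{[1,k]}$ to a bi-infinite Dyck sequence — but by the structure of $\pi_+$, for a given $w$, once we know for each coordinate whether $x_j$ is a left bracket (in which case $w_j = i$ records which one, and $\pi_+$ is forced) or a right bracket (in which case $w_j = 0$ and the index of that right bracket is \emph{not} recorded by $\pi_+$), the measure decomposes as a product over coordinates. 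Concretely, for each coordinate $j$ in the window, a left bracket $\alpha_i$ contributes a factor $\tfrac1N$ (the $\pi_+$-fiber is a single letter but has $\nu_N$-weight $\tfrac1N$ spread over the $N$ symbols $1,\ldots,N$... let me restate), while a right bracket $\beta_i$ contributes a factor depending on how many unmatched left brackets remain available to its left.

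The cleaner route: recall from \cite{krieger_74} that $\mu_+$ is the pushforward of $\nu_N$ under the a.e.-defined inverse of $\pi_+$, and that $\pi_+$ sends $\alpha_i\mapsto i$ and every right bracket to $0$. So $\mu_+([x_1,\ldots,x_k])$ is $\nu_N$ of the cylinder $[\pi_+(x)_1,\ldots,\pi_+(x)_k]$ intersected with the set of sequences admitting a legal completion consistent with the chosen right-bracket \emph{types} of $x$. For a periodic point, whether a completion is legal is governed entirely by the running ``bracket-height'' function $S_m := |\{1\le j\le m: \pi_+(x)_j\ne 0\}| - |\{1\le j\le m: \pi_+(x)_j = 0\}|$, i.e. the number of left brackets minus right brackets seen so far. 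A right bracket $\beta_i$ in position $j$ at a point where $S_{j-1} > 0$ (there is a matching left bracket inside the window already committed) is forced and contributes a factor $\tfrac1N$ in the $\nu_N$-weight of the coordinate $0$ — no, it contributes factor $1$ since its type is determined by the already-placed left bracket — whereas a right bracket occurring when $S_{j-1}\le 0$ (it must be matched by a left bracket \emph{outside} the window to the left) is ``free'': its type can be any of $N$ values, contributing a factor $N$ relative to the naive count, but the coordinate-$0$ weight $\tfrac1{N+1}$ of $\nu_N$ at that position is what we actually multiply. Carefully bookkeeping: each of the $k$ positions contributes $\tfrac1{N+1}$ from $\nu_N$ (since $\nu_N$ is uniform on $N+1$ symbols), and then there is a correction factor $N^{(\text{number of right brackets forced to match outside the window to the left})}$. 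Over a window of length $k$ of an $n$-periodic point, the number of such ``excess'' unmatched right brackets grows like $\max(0, \#\{j\le n:\pi_+(x)_j=0\} - \#\{j\le n:\pi_+(x)_j\ne 0\})\cdot q$ plus $O(1)$, because the height function $S_m$ has linear drift equal to $n$ times the per-period net increment, and the number of prefixes sitting at a new running minimum is what forces external matches.

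Assembling this: $\tfrac1k\log\mu_+([x_1,\ldots,x_k]) = \log\tfrac1{N+1} + \tfrac1k\big(\text{external-match count}\big)\log N + o(1)$, and the external-match count per period is exactly $\max\!\big(0,\, |\{1\le j\le n:\pi_+(x)_j=0\}| - |\{1\le j\le n:\pi_+(x)_j\ne 0\}|\big)$, which (once one checks the sign and the $\min$ vs.\ $\max$ convention forced by writing things in terms of $A(x)$) yields $h_+(x) = \log(N+1) + A(x)\log N$ with $A(x)$ as in \eqref{eq:A_def}. The limit exists because the per-window correction is exactly periodic up to a bounded error, so the Cesàro-type average converges. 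The case $h_-(x)$ is identical with $\pi_-$ replacing $\pi_+$, and since $\pi_+(x)_j = 0 \iff \pi_-(x)_j\ne 0$, the two roles of left/right brackets swap, flipping the sign of the increment and producing the $-A(x)$ term — here one uses that $A(x)$, being a $\min$ with $0$, is built to capture whichever of the two directions has the deficit, so the same $A(x)$ works for both signs with the stated $\pm$.

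\textbf{Main obstacle.} The delicate point is the exact bookkeeping of the ``external-match'' exponent: one must verify that the number of right brackets in $[1,k]$ that are forced to be matched by left brackets strictly to the left of position $1$ equals the number of strict descending records of the height function $S_m$ below its starting level, and that for an $n$-periodic point this quantity is $q$ times the per-period deficit up to an $O(1)$ boundary term — and then confirm that the $\min(0,\cdot)$ in the definition of $A(x)$ is precisely the right normalization so that the single quantity $A(x)$ governs both $h_+$ and $h_-$ via the $\pm$ sign. Making the boundary terms rigorous (they involve the behavior of the height function near the window edges, where legal completions on the left and right interact) is where the real work lies; everything else is a product-measure computation.
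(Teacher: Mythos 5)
Your proposal follows essentially the same route as the paper: the paper's proof consists precisely of the observation that $\mu_+([a_1,\ldots,a_k])=(N+1)^{-k}N^{-u}$ with $u$ the number of right brackets in the window whose matching left bracket lies outside it, followed by taking logarithms, dividing by $k$ and letting $k\to\infty$, with the $h_-$ case obtained by symmetry. You are in fact more explicit than the paper about the one genuine subtlety --- that the exact exponent is governed by the running minimum of the bracket-height function rather than by the endpoint deficit, the two agreeing only up to an $O(1)$ error along a periodic orbit --- so the sign and $\min$-versus-$\max$ bookkeeping you defer is no worse than what the paper itself leaves to the reader.
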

\begin{proof}
Check directly from the definition of $\mu_+$ that
$$\mu_+\left([a_1,\ldots,a_k] \right) =  (\frac{1}{N+1})^k \cdot \frac{1}{N^A},$$
where
$$A:= \min\left(0,\#\left\{ 1\le t \le k~:~ a_t \in \{\beta_1,\ldots,\beta_N \}\right\}- \#\left\{ 1\le t \le k~:~ a_t \in \{\alpha_1,\ldots,\alpha_N \} \right\}\right)$$
is the number of ``unmatched $\beta_j$'s''.
The formula for $h_+(x)$ follows directly by setting $a_i=x_i$, taking logarithm, dividing by $k$ and taking the limit $k \to \infty$. The formula for $h_{-}(x)$ follows by symmetry.
\end{proof}

\begin{lem}\label{lem:dyck_local_entropy_factor}
There exists a sequence of integers $\{k_j\}_{j=1}^\infty$ with $\lim_{j \to \infty}k_j =\infty$ so that
for any $n \in \NN$ and  $y\in Y^{(n)}$ and $z \in Z^{(n)}$ the limits
$$h_{\pm}(y):= \lim_{j \to \infty} \frac{1}{k_j} \log\nu_\pm ([y_1,\ldots,y_{k_j}])$$
and
$$h(z):= \lim_{k \to \infty} \frac{1}{k_j} \log \eta ([y_1,\ldots,y_{k_j}])$$
exist and satisfy
\begin{equation}\label{eq:dyck_local_entropy_abramov_f}
h_{\pm}(y,z) = h_\pm(y) + h(z)
\end{equation}
Furthermore,
$h(z)= h(Z)$ is independent of $z \in Z^{(n)}$.
\end{lem}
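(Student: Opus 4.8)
The plan is to transport the local entropy statement of Lemma \ref{lem:dyck_local_entropy_pp} through the conjugacy $\Phi$. Write $x = \Phi(y,z) \in D_N$. Since $\Phi$ is a topological conjugacy of subshifts, it is given by a sliding block code with some window radius $r$, and likewise for $\Phi^{-1}$; in particular if $y \in Y^{(n)}$ and $z \in Z^{(n)}$ then $x \in D_N^{(n)}$, and conversely. The first step is to observe that, because $\mu_+ \cong \nu_+ \times \eta$ via $\Phi$ (established just before the lemma), for any cylinder in $D_N$ its $\mu_+$-measure equals the product $(\nu_+ \times \eta)$-measure of its $\Phi^{-1}$-preimage, which is a finite union of ``rectangle'' cylinders $[y_a,\ldots,y_b] \times [z_a,\ldots,z_b]$ in $Y \times Z$. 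Comparing cylinders of length $k$ on $D_N$ with cylinders of length $k \pm 2r$ on the factors, one gets two-sided bounds
$$
\nu_+([y_1,\ldots,y_{k-r}])\,\eta([z_1,\ldots,z_{k-r}]) \;\le\; c\cdot\mu_+([x_1,\ldots,x_k]) \;\le\; C\cdot \nu_+([y_1,\ldots,y_{k+r}])\,\eta([z_1,\ldots,z_{k+r}])
$$
for constants depending only on $r$ and $N$ (the union is over a bounded number of rectangles, and $\eta$-measures of short extensions are bounded above and below). Taking logarithms and dividing by $k$, the additive splitting \eqref{eq:dyck_local_entropy_abramov_f} will follow \emph{provided} the three one-variable limits exist along a common subsequence.

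The second step handles existence of the limits. For $h(z)$: since $Z$ is intrinsically ergodic with unique m.m.e. $\eta$, and $z$ is a periodic point, a standard argument (the same Shannon--McMillan--Breiman-type estimate as in Lemma \ref{lem:dyck_local_entropy_pp}, or directly: $\eta$ is the pushforward of a Bernoulli measure, so local entropies along periodic points converge) gives $\lim_k \frac1k\log\eta([z_1,\ldots,z_k]) = h(Z)$, independent of $z$ — this is exactly the ``furthermore'' clause. For $h_\pm(y)$: here I do \emph{not} expect the full limit to exist for every periodic $y$, which is why the statement only claims convergence along a subsequence $\{k_j\}$. The idea is to fix a countable dense-in-itself exhausting family — namely run over all $n$ and all $y \in Y^{(n)}$, $z\in Z^{(n)}$, a countable set — and use a diagonal argument: the sequences $\frac1k\log\nu_\pm([y_1,\ldots,y_k])$ are bounded (between $-\log N^{?}$ and $0$ after normalization, or more simply bounded since $\nu_\pm$ is a shift-invariant measure on a subshift so cylinder measures decay at controlled rate), hence along a subsequence $\{k_j\}$, simultaneously for every $y$ in this countable family, the Cesàro-type averages converge. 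Because $\mu_\pm([x_1,\ldots,x_k])$ \emph{does} converge (Lemma \ref{lem:dyck_local_entropy_pp}) and $\eta([z_1,\ldots,z_k])$ converges, the two-sided bound above forces $\frac{1}{k_j}\log\nu_\pm([y_1,\ldots,y_{k_j}])$ to converge and pins down its value as $h_\pm(x) - h(Z)$; in particular \eqref{eq:dyck_local_entropy_abramov_f} holds with this choice of $\{k_j\}$.

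The main obstacle is making the diagonal/subsequence extraction uniform over the countably many periodic points while keeping the single sequence $\{k_j\}$ working for both signs $\pm$ and all $n$ simultaneously; the point is that it suffices to handle a countable set of periodic points, and a single diagonalization over that countable set — exploiting that $\mu_\pm$ and $\eta$ already have genuine limits so only the $\nu_\pm$ factor needs extracting — produces the required $\{k_j\}$. A secondary technical nuisance is bookkeeping the sliding-block window shifts of size $r$ so that the $\pm r$ discrepancies in cylinder lengths wash out after dividing by $k_j \to \infty$; this is routine given that all the measures involved assign to a length-$k$ cylinder a value bounded below by $(\text{const})^k$, so $\log$ of a bounded-length extension is $o(k)$.
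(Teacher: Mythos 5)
Your first step (the two-sided sandwich of $\mu_\pm$-cylinder measures by products of $\nu_\pm$- and $\eta$-cylinder measures of slightly shorter/longer cylinders, with the block-code radius washing out after dividing by $k$) is essentially the paper's inequality \eqref{eq:dyck_ll_addition}, and your subsequence extraction for the additive formula would work. The genuine gap is in your treatment of $h(z)$. You claim that $\lim_k \frac1k\log\eta([z_1,\ldots,z_k])$ exists for every periodic $z$ and equals $h(Z)$ by a ``standard SMB-type argument'' or because $\eta$ is a pushforward of a Bernoulli measure. Neither justification is valid: periodic points form an $\eta$-null set, so Shannon--McMillan--Breiman says nothing about them, and the ``pushforward of Bernoulli'' heuristic is refuted in this very setting --- $\mu_\pm$ on $D_N$ is itself isomorphic to a Bernoulli measure, yet Lemma \ref{lem:dyck_local_entropy_pp} shows its local entropies at periodic points equal $\log(N+1)\pm A(x)\log N$, which genuinely vary with $x$ (and the whole subsequent argument depends on $h_\pm(y)$ varying with $y$, where $\nu_\pm$ is an equally good ``projection of a Bernoulli measure''). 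Intrinsic ergodicity of $Z$ gives you nothing pointwise at periodic points. Consequently the ``furthermore'' clause --- that $h(z)$ is independent of $z$ --- is left unproved in your proposal; it is the substantive content of the lemma, not a routine convergence fact.

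The paper handles both points differently. It never claims the full limit of $\frac1k\log\eta([z_1,\ldots,z_k])$ exists: it extracts a subsequence $\{k_j\}$ along which $c_k(z)$ converges for one $z$, then uses the sandwich (together with the genuine convergence of $a_k^\pm(x)$ from Lemma \ref{lem:dyck_local_entropy_pp}) to bootstrap convergence of $b^\pm_{k_j}(y)$ for all periodic $y$ and then of $c_{k_j}(z)$ for all periodic $z$ --- no diagonalization needed. For independence of $h(z)$ from $z$ it argues by contradiction: if $h(z)<h(\hat z)$ then both $h_+(\Phi(y,z))<h_+(\Phi(y,\hat z))$ and $h_-(\Phi(y,z))<h_-(\Phi(y,\hat z))$, so $\min\{h_+(x),h_-(x)\}$ would take two different values on periodic points of $D_N$, contradicting the explicit formula \eqref{eq:h_pm_A_formula}, which forces $\min\{h_+(x),h_-(x)\}=\log(N+1)$ for every periodic $x$. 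You need this (or some substitute exploiting the symmetry between $h_+$ and $h_-$) to close the argument; your proposal as written does not contain it.
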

\begin{proof}
Fix an integer $M$ big enough 
so that the $0$ coordinate of both $\Phi$ and $\Phi^{-1}$ are  determined by the coordinates $[-M,M]$.
Let $x \in D_N^{(n)}$ and write $x=\Phi(y,z)$ with $y \in Y^{(n)}$ and $z \in Z^{(n)}$.
Write $$a^{\pm}_k(x) = -\frac{1}{k} \log \mu_\pm ([x_1,\ldots,x_k]) ,$$
$$b^{\pm}_k(y) = -\frac{1}{k} \log \nu_\pm ([y_1,\ldots,y_k]) ,$$
$$c_k(z) = -\frac{1}{k} \log\eta ([z_1,\ldots,z_k])$$

Since $\mu_\pm = (\nu_\pm \times \eta)\circ \Phi^{-1}$,  for all $k > 2M$,
\begin{equation}\label{eq:dyck_ll_addition}
 b^{\pm}_{k-M-n}(y)+c_{k-M-n}(z) \le a^{\pm}_k(x) \le  b^{\pm}_{k+M+n}(y)+c_{k+M+n}(z) .
\end{equation}
We know that the sequences $\{a_k^{\pm}(x)\}_{k \ge 1}$,
$\{b_k^{\pm}(y)\}_{k \ge 1}$ and $\{c_k(z)\}_{k \ge 1}$ are all non-negative.
By Lemma \ref{lem:dyck_local_entropy_pp} the sequence $\{a_k^{\pm}(x)\}_{k \ge 1}$ converges for any $x \in \bigcup_{n \in \NN}D_N^{(n)}$.
It follows that the sequences $\{b_k^{\pm}(y)\}_{k \ge 1}$ and $\{c_k(z)\}_{k \ge 1}$ are bounded. Thus for one particular $z \in \bigcup_{n \in \NN}Z^{(n)}$ there is a subsequence $\{k_j\}_{j=1}^\infty$  along which $\{c_k(z)\}_{k \ge 1}$  converges. By \eqref{eq:dyck_ll_addition} $\{b_{k_j}^{\pm}(y)\}_{j \ge 1}$ converges along this same subsequence for \emph{any} $y \in \bigcup_{n \in \NN}Y^{(n)}$. Again by \eqref{eq:dyck_ll_addition} it follows that $\{c_{k_j}(z)\}_{j \ge 1}$ converges for \emph{any} $z \in \bigcup_{n \in \NN}Z^{(n)}$.
The formula \eqref{eq:dyck_local_entropy_abramov_f} follows directly from \eqref{eq:dyck_ll_addition} by taking a limit along the sequence $\{k_j\}$.

It remains to show that $h(z)=h(\hat{z})$ for all $z,\hat{z} \in \bigcup_{n \in \NN}Z^{(n)}$. Suppose otherwise, $h(z) < h(\hat{z})$. It follows that for all $y \in \bigcup_{n \in \NN}Y^{(n)}$
$$h_+(\Phi(y,z)) < h_+(\Phi(y,\hat{z})),$$
and
$$h_-(\Phi(y,z)) < h_-(\Phi(y,\hat{z})).$$

In particular there exist $x,\hat{x} \in \bigcup_{n \in \NN}D_N^{(n)}$ with
$$\min\{ h_-(x),h_+(x)\} < \min\{ h_-(\hat{x}),h_+(\hat{z})\}.$$
By lemma \ref{lem:dyck_local_entropy_pp} $\min\{ h_-(x),h_+(x)\} = \log(N+1)$ for all $x \in \bigcup_{n \in \NN}D_N^{(n)}$, which is a contradiction.
\end{proof}

For $c \in [-1,1]$ and $n \in \NN$ let
$$D_N^{(n,c)}:= \left\{x \in D_N ~:~ \sigma_N x =x ,~ h_+(x)-h_-(x) = c\log(N) \right\}.$$
similarly, set:
$$Y^{(n,c)}:= \left\{y \in Y ~:~ \sigma_N y =y ~, h_+(y)-h_-(y) = c\log(N) \right\}.$$

By Lemma \ref{lem:dyck_local_entropy_pp}, we have
$$D_N^{(n,c)}:= \left\{x \in D_N ~:~ \sigma_N x =x , A(x) = c \right\},$$
where $A(x)$ is given by \eqref{eq:A_def}.
In other words, $D_N^{(n,c)}$ is the set of legal bi-infinite sequences in $D_N^{(n)}$ 
in which the number of left brackets  minus the number of right-brackets is $cn$.
An elementary calculation shows that for $ -n < j < n$ such that $ n-j = 0 \mod 2$:

\begin{equation}\label{eq:dyck_pp_count}
|D_n^{(n,\frac{j}{n})}| = {n \choose \frac{n}{2} + \frac{j}{2}}N^{\frac{n}{2}+|\frac{j}{2}|}
\end{equation}

The term ${n \choose \frac{n}{2} + \frac{j}{2}}$ in \eqref{eq:dyck_pp_count} corresponds to selecting the locations of the left brackets within the cycle. The term $N^{\frac{n}{2}+|\frac{j}{2}|}$ corresponds to selecting the ``types'' of left-brackets independently, or selecting the types of right-brackets, according to the sign of $j$.

In particular,

$$|D_N^{(n,1)}|=|D^{(n,-1)}| =  N^n \mbox{ and }
|D_N^{(2n,0)}|={2n \choose n}N^n$$

Our next step in the proof of Theorem \ref{thm:dyck_prime}, is to show that for prime $N$,  $|Z^{(N^n)}|=1$ for all $n$.
A version of the following argument  appears in \cite[Theorem $7$ ]{lind_entropies_markov_shifts_1984} which is the case $d=1$ of Theorem \ref{thm:full_shift_direct_prime}:

By Lemma \ref{lem:dyck_local_entropy_factor},
\begin{equation}
|D_N^{(n,c)}| = |Y^{(n,c)}|\cdot |Z^{(n)}|
\end{equation}

We now assume $N$ is prime. 
For $k= 0,1,2,\ldots$ we have
\begin{equation}\label{eq:dyck_N_power_pp}
|Y^{(N^k,1})|\cdot|Z^{(N^k)}|=|D_{N}^{(N^k,1)}|=N^{N^k}.
\end{equation}
Because $N$ is prime it follows that both $|Y^{(N^k,1})|$ and $|Z^{(N^k)}|$ are non-negative integer powers of $N$.
In particular for $k=0$ we have $N = |Y^{(n,1)}|\cdot |Z^{(1)}|$.
Thus, either $|Z^{(1)}|= N$ or $|Z^{(1)}|= 1$.

Suppose first that $|Z^{(1)}|=N$, and so $|Y^{(1,1)}|=1$. For $n \in \NN$, denote by $Y^{(n)}_*$ the set of points whose \emph{least period} is $n$. It follows that $Y^{(n,1)}= \biguplus_{ m \mid n} Y^{(m,1)}_*$. Also note that $m$  divides $|Y^{(m,1)}_*|$, since $Y^{(m,1)}_*$ is a disjoint union of orbits each of which has cardinality $m$.
Thus, since $N$ is prime the only divisors of $N^k$ are $N^j$ for $j=0,\ldots, k$ so
$$|Y^{(N^k,1)}|= |Y^{(1,1)}| + \sum_{j =0}^k |Y^{(N^j,1)}_*| \equiv  |Y^{(1,1)}| \equiv 1 \mod N.$$
On the other hand, $|Y^{(N^k,1)}|$ divides $|D^{(N^k,1)}|$ and $|D^{(N^k,1)}| = N^{N^k}$  by  \eqref{eq:dyck_N_power_pp}. Thus, since we assume $N$ is prime we have that $|Y^{(N^k)}|= N^l$ for some $l=0,\ldots,N^k$.
It follows that $N^l \equiv 1 \mod N$ thus $l=0$ so $|Y^{(N^k,1)}|=1$ for all $k$.
It follows that $|Z^{(N^k)}|=|D_{N}^{(N^k,1)}|$ for all $k$. On the other hand, we have $$|D^{(N^k,1)}| \cdot |Y^{(N^k,0)}|  = |Z^{(N^k)}| \cdot |Y^{(N^k,0)}| = |D_N^{(N^K,0)}|$$
 so we obtain $|D_{N}^{(N^k,1)}| \le |D_{N}^{(N^k,0)}|$,
which by \eqref{eq:dyck_pp_count} is false for sufficiently large $k$.

We conclude that $|Z^{(1)}|= 1$. Repeating the above argument with $Y$ and $Z$ interchanged, it follows that $|Z^{(N^k)}|=1$ for all $k$, and in particular,
\begin{equation}\label{eq:dyck_direct_factor_pp_growth}
\liminf_{n \to \infty} \frac{1}{n}|Z^{(n)}| = 0
\end{equation}

We will now show this implies $h(Z)=0$:

\begin{lem}\label{lem:pp_entropy_dirct_factor}
Suppose the entropy of a subshift $X$ is determined by the growth rate  of the periodic points, in the sense that
$$\lim_{n \to \infty} \frac{1}{n}|X^{(n)}| = h(X),$$
Then for any direct factor $Z$ of $X$ the entropy is determined by the growth rate  of its periodic points:
$$\lim_{n \to \infty} \frac{1}{n}|Z^{(n)}| = h(Z),$$
and in particular the limit on the left-hand side exists.
\end{lem}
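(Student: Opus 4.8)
The plan is to exploit the fact that periodic points, admissible words, and entropy are all multiplicative under direct products, and then run a short squeezing argument. Write the given direct factorization as $X \cong Z \times Y$ for some subshift $Y$. Since the shift acts coordinatewise, a point of $Z \times Y$ is $\sigma^n$-fixed iff both coordinates are, so $X^{(n)} \cong Z^{(n)} \times Y^{(n)}$ and
\[
\tfrac1n \log |X^{(n)}| \;=\; \tfrac1n \log|Z^{(n)}| + \tfrac1n \log|Y^{(n)}| \qquad (n \in \NN).
\]
First I would record two standard facts about subshifts. (a) For every subshift $W$ one has $\limsup_{n} \tfrac1n\log|W^{(n)}| \le h(W)$: an $n$-periodic point is determined by its restriction to $\{0,\dots,n-1\}$, which is a word occurring in $W$, so $|W^{(n)}|$ is bounded by the number $b_n(W)$ of length-$n$ words of $W$, and $\tfrac1n\log b_n(W)\to h(W)$ by the usual subadditivity. (b) Entropy is additive on products, $h(Z\times Y) = h(Z)+h(Y)$, because $b_n(Z\times Y)=b_n(Z)\,b_n(Y)$. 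I would also note that the hypothesis makes $\tfrac1n\log|X^{(n)}|$ converge to the \emph{finite} number $h(X)$, so $|X^{(n)}|\ge 1$ for all large $n$, whence $|Z^{(n)}|\ge 1$ and $|Y^{(n)}|\ge 1$ for all large $n$; in particular the sequences $a_n:=\tfrac1n\log|X^{(n)}|$, $c_n:=\tfrac1n\log|Z^{(n)}|$, $e_n:=\tfrac1n\log|Y^{(n)}|$ are eventually well defined and non-negative.

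Then comes the squeeze. By hypothesis together with (b), $c_n + e_n = a_n \to h(X) = h(Z)+h(Y)$, while by (a), $\limsup_n c_n \le h(Z)$ and $\limsup_n e_n \le h(Y)$. If $\liminf_n c_n < h(Z)$, pick a subsequence along which $c_n \to \liminf_n c_n$; along it $e_n = a_n - c_n \to \bigl(h(Z)+h(Y)\bigr) - \liminf_n c_n > h(Y)$, contradicting $\limsup_n e_n \le h(Y)$. Hence $\liminf_n c_n \ge h(Z)$, and combined with $\limsup_n c_n \le h(Z)$ this gives $\lim_{n\to\infty}\tfrac1n\log|Z^{(n)}| = h(Z)$, which is the assertion (and in particular the limit exists). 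I do not expect a genuine obstacle: the only points needing a word of justification are the inequality in (a) — which is just the subadditivity defining the entropy of a subshift — and the observation that the periodic-point sets of the factors are nonempty for large $n$, which is forced by the hypothesis on $X$ itself.
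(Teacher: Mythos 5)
Your proof is correct and follows essentially the same route as the paper's: multiplicativity of periodic-point counts and entropy under direct products, the standard bound $\limsup_n \tfrac1n\log|W^{(n)}|\le h(W)$ for any subshift, and a squeeze to force $\liminf_n \tfrac1n\log|Z^{(n)}|\ge h(Z)$. Your extra remark that the hypothesis forces $|X^{(n)}|\ge 1$ (hence $|Z^{(n)}|,|Y^{(n)}|\ge 1$) for large $n$ is a small tidiness point the paper leaves implicit.
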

\begin{proof}
Note that for any subshift $Y$ and any $n \in \NN$,
$$| \{ y_{\{1,\ldots,n\}}~:~ y \in Y^{(n)}\}| \le | \{ y_{\{1,\ldots,n\}}~:~ y \in Y\}|.$$
It is well known (and easily verified from the definitions) that
for any subshift the exponential growth rate of the periodic points is bounded above by the topological entropy:
\begin{equation}\label{eq:pp_lower_bound_entropy}
\limsup_{n \to \infty} \frac{1}{n}\log|Y^{(n)}| \le h(Y).
\end{equation}
Suppose $X \cong Y \times Z$. It follows that $h(X) = h(Y) +h(Z)$ . If $X$ satisfies the assumption of the lemma then
$$ h(X)= \lim_{n \to \infty} \frac{1}{n} \left(\log|Y^{(n)}| + \log|Z^{(n)}|\right),$$
It follows from \eqref{eq:pp_lower_bound_entropy} that
$$\liminf_{n \to \infty} \frac{1}{n}\log|Z^{(n)}| \ge h(Z).$$
Again by \eqref{eq:pp_lower_bound_entropy} applied to $Z$, we conclude that
$$\lim_{n \to \infty} \frac{1}{n}|Z^{(n)}| = h(Z).$$
\end{proof}

\begin{lem}\label{lem:dyck_pp_entropy}
The topological entropy of any direct factor $Z$ of the $N$-Dyck shift is determined by the growth rate of its periodic points:
$$\lim_{n \to \infty} \frac{1}{n}|Z^{(n)}| = h(Z)$$
\end{lem}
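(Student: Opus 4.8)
The plan is to reduce this to Lemma~\ref{lem:pp_entropy_dirct_factor}: it suffices to verify that the Dyck shift $D_N$ itself has the property appearing in the hypothesis of that lemma, namely that its topological entropy is realized by the exponential growth rate of its periodic points,
\[
\lim_{n\to\infty}\frac1n\log|D_N^{(n)}| \;=\; h(D_N)\;=\;\log(N+1),
\]
and then to apply Lemma~\ref{lem:pp_entropy_dirct_factor} with $X=D_N$ to the direct factor $Z$.

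First I would record that $h(D_N)=\log(N+1)$. By Theorem~\ref{thm:dyck_2_mme} the measure of maximal entropy $\mu_+$ is obtained from the uniform Bernoulli measure $\nu_N$ on $\{0,\dots,N\}^\ZZ$ through $\pi_+$, and $\pi_+$ is injective on a set of full measure (Krieger's invariant Borel set $X_0$); hence $(D_N,\sigma,\mu_+)$ is measurably isomorphic to $(\{0,\dots,N\}^\ZZ,\sigma,\nu_N)$, so $h(D_N)=h_{\mu_+}(\sigma)=\log(N+1)$.

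The upper bound $\limsup_n\frac1n\log|D_N^{(n)}|\le h(D_N)$ is immediate from the general inequality \eqref{eq:pp_lower_bound_entropy}, so the content is the matching lower bound. Summing the periodic point count \eqref{eq:dyck_pp_count} over the net bracket surplus $j$ and substituting $k=\tfrac{n+j}{2}$ yields the identity
\[
|D_N^{(n)}| \;=\; \sum_{k=0}^{n}\binom{n}{k}\,N^{\max(k,\,n-k)}\;\ge\;\sum_{\lceil n/2\rceil\le k\le n}\binom{n}{k}N^{k}.
\]
To estimate the right-hand side from below, I would pair each index $k<n/2$ with $n-k$: since $n-k>k$ and $N>1$ we have $\binom{n}{k}N^{k}<\binom{n}{n-k}N^{n-k}$, so in the binomial expansion $(1+N)^n=\sum_{k=0}^n\binom nk N^k$ the contribution of indices $k<n/2$ is strictly smaller than that of indices $k\ge n/2$. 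Hence $\sum_{\lceil n/2\rceil\le k\le n}\binom nk N^k\ge\tfrac12(1+N)^n$, so $|D_N^{(n)}|\ge\tfrac12(1+N)^n$ and therefore $\liminf_n\frac1n\log|D_N^{(n)}|\ge\log(N+1)=h(D_N)$. Combined with the upper bound this gives the displayed limit.

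Thus $D_N$ meets the hypothesis of Lemma~\ref{lem:pp_entropy_dirct_factor}, which applied to the direct factor $Z$ immediately yields $\lim_n\frac1n\log|Z^{(n)}|=h(Z)$. The only step that is not bookkeeping with \eqref{eq:dyck_pp_count} or a citation of earlier results is the ``mirror-pairing'' estimate $\sum_{k\ge n/2}\binom nk N^k\ge\tfrac12(1+N)^n$; this is elementary, so I do not anticipate a genuine obstacle — the point is simply that the Dyck shift, though not sofic, still has enough periodic points to detect its entropy.
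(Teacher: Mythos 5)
Your proof is correct and follows essentially the same route as the paper: the paper also reduces to Lemma~\ref{lem:pp_entropy_dirct_factor} and notes that $\lim_n \frac1n\log|D_N^{(n)}|=\log(N+1)=h(D_N)$ either by citing Krieger or by "a direct computation using \eqref{eq:dyck_pp_count}", which is exactly the computation you carry out. Your mirror-pairing bound $\sum_{k\ge n/2}\binom nk N^k\ge\tfrac12(1+N)^n$ is a clean way to make that sketched computation explicit, and the identification $h(D_N)=h_{\mu_+}=\log(N+1)$ via the Bernoulli isomorphism matches the paper's justification.
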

\begin{proof}
The fact that the topological entropy of $D_N$  is equal to $\lim_{n \to \infty} \frac{1}{n}|D_N^{(n)}|$ is a particular case of  \cite[Proposition $3.1$ ]{krieger_zeta_markov_dyck_2011}, which gives the corresponding result for a bigger family of subshifts.
This also follows by a direct computation  of the limit
$\lim_{n \to \infty} \frac{1}{n}|D_N^{(n)}| = \log(N+1) = h(D_N)$, using \eqref{eq:dyck_pp_count}.
The last equality holds because $h_{\mu_+}(D_N)=h_{\mu_-}(D_N)=\log(N+1)$ by isomorphism to the Bernoulli $(N+1)$-shift as in \cite{krieger_74}, or by a direct computation.
The proof now follows by Lemma \ref{lem:pp_entropy_dirct_factor}.
\end{proof}

We conclude that $h(Z)=0$. This implies $Z$ is a trivial $1$-point subshift as follows:

Recall that a topological dynamical system has \emph{completely positive entropy} if its only zero-entropy factor  is
the trivial factor \cite{blanchard_cpe}. 

\begin{lem}\label{lem:dyck_cpe}
The $N$-Dyck shift $D_N$ has completely positive entropy. 
\end{lem}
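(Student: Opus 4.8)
The plan is to exhibit, for every non-trivial topological factor $(W,\sigma)$ of $D_N$, a $\sigma$-invariant measure on $W$ with positive entropy; equivalently, to show that the push-forward of \emph{some} measure of maximal entropy on $D_N$ under the factor map $\rho:D_N\to W$ is not a point mass. The key structural input is Theorem \ref{thm:dyck_2_mme}: the only ergodic measures of maximal entropy on $D_N$ are $\mu_+$ and $\mu_-$, both isomorphic (as measure-preserving systems) to the uniform Bernoulli shift on $N+1$ symbols, hence both $K$-systems. Since a $K$-system has completely positive measure-theoretic entropy, if $\rho_*\mu_+$ were a point mass, then the factor $(W,\sigma)$ would be a \emph{topological} zero-entropy factor \emph{on the support of} $\rho_*\mu_+$ only — which is not yet enough; so the real content is to promote this to a statement about all of $W$.

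First I would argue that it suffices to rule out the existence of a non-trivial \emph{equicontinuous} or more generally zero-entropy factor that is invisible to $\mu_+$ and $\mu_-$. Concretely: suppose $\rho:D_N\to W$ is a factor with $h(W,\sigma)=0$. Then $h_{\rho_*\mu_+}(W,\sigma)=0$ and $h_{\rho_*\mu_-}(W,\sigma)=0$. Because $(D_N,\mu_\pm)$ is a $K$-system, every measure-theoretic factor of it has zero entropy only if it is trivial; hence $\rho_*\mu_\pm$ is supported on a single point, say $w_+$ and $w_-$ respectively. Now I would use the fact (from Krieger's analysis, recalled before Theorem \ref{thm:dyck_2_mme}) that $\mu_+$ and $\mu_-$ have full support in $D_N$: indeed $\mathrm{supp}(\mu_+)=\mathrm{supp}(\mu_-)=D_N$, since the uniform Bernoulli measure on $\{0,\dots,N\}^{\ZZ}$ has full support and $\pi_\pm$ is surjective with the pullback measures giving positive mass to every cylinder of $D_N$. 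Consequently $\rho(D_N)=\rho(\mathrm{supp}\,\mu_+)\subseteq\overline{\mathrm{supp}(\rho_*\mu_+)}=\{w_+\}$, forcing $W=\{w_+\}$ to be trivial.

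The main obstacle is the step ``$\rho_*\mu_\pm$ is a point mass.'' A measure-theoretic factor of a $K$-system with zero entropy is trivial \emph{as a measure-preserving system}, i.e. it is the trivial one-point system up to $\rho_*\mu_+$-null sets — but a priori $\rho_*\mu_+$ could be a non-atomic measure on $W$ that is carried by a single point only after discarding a null set, which is a contradiction in terms, so in fact $\rho_*\mu_+$ \emph{is} a point mass; the subtlety is purely that ``factor map'' here is topological (defined everywhere, continuous) while the $K$-property is measure-theoretic, and one must be careful that the measurable triviality of the factor $\sigma$-algebra generated by $\rho$ under $\mu_+$ genuinely yields $\rho_*\mu_+=\delta_{w_+}$. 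I would handle this by noting that the Pinsker $\sigma$-algebra of $(D_N,\mu_+)$ is trivial, the sub-$\sigma$-algebra $\rho^{-1}(\mathcal{B}_W)$ has zero entropy hence lies in the Pinsker $\sigma$-algebra, so $\rho^{-1}(\mathcal{B}_W)$ is $\mu_+$-trivial, which means $\rho$ is $\mu_+$-a.e. constant, i.e. $\rho_*\mu_+$ is a point mass. With full support of $\mu_+$ this propagates to all of $D_N$ by continuity of $\rho$, completing the proof.
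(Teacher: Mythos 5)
Your proof is correct, but it takes a genuinely different route from the paper's. The paper disposes of the lemma by citation: $D_N$ is a coded system, any non-trivial subshift factor of a coded system is again coded, and by Blanchard--Hansel this forces every non-trivial factor to have positive entropy. You instead argue directly from the structure of the measures of maximal entropy already established in Section \ref{sec:dyck_factorization}: if $\rho:D_N\to W$ is a factor with $h(W,\sigma)=0$, the variational principle gives $h_{\rho_*\mu_+}(W,\sigma)=0$; since $(D_N,\mu_+,\sigma)$ is Bernoulli, hence a $K$-system with trivial Pinsker algebra, the sub-$\sigma$-algebra $\rho^{-1}(\mathcal{B}_W)$ is $\mu_+$-trivial, and a $\{0,1\}$-valued Borel measure on the compact metric space $W$ is a Dirac mass $\delta_{w_+}$; finally, $\mu_+$ has full support (every non-empty cylinder of $D_N$ has positive $\mu_+$-measure, as is immediate from the description of $\mu_+$ as the pull-back of the uniform Bernoulli measure via $\pi_+$, cf. the cylinder formula in the proof of Lemma \ref{lem:dyck_local_entropy_pp}), so the closed set $\rho^{-1}(\{w_+\})$ of full measure is all of $D_N$ and $W$ is trivial. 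All the ingredients you use are proved or quoted elsewhere in the paper, so your argument is self-contained where the paper's is not, and it generalizes verbatim to any subshift carrying a fully supported invariant measure with the $K$-property; what it does not yield is the extra structural information of the coded-system route (e.g.\ that every subshift factor of $D_N$ is itself coded). Two minor remarks: you only need one of $\mu_{\pm}$, not both; and the hesitation in your first paragraph about ``promoting to all of $W$'' is resolved exactly by the full-support step, which deserves to be flagged as essential rather than incidental --- without it the $K$-property alone only controls $W$ up to a $\rho_*\mu_+$-null set.
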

\begin{proof}
The $N$-Dyck shift $D_N$ is a coded system in the sense of \cite{blanchard_hansel_coded_system_1986}. See the remark in  \cite[Section $2.1$]{meyerovitch_dyck}.
By \cite{blanchard_hansel_coded_system_1986} any coded system has completely positive entropy.
The last result follows by observing that a non-trivial factor of a coded system is itself a coded system, thus has positive entropy.
\end{proof}

\begin{remark}
It is possible complete the proof Theorem \ref{thm:dyck_prime} without using  Lemma \ref{lem:dyck_cpe} and the notion of  ``completely positive entropy''. An alternative argument is to prove the set
$\bigcup_{k =1}^\infty D_N^{(N^k)}$ of $N^k$-periodic points is dense in $D_N$. This property passes to direct factors.
\end{remark}

\begin{remark}
``Entropy like'' quantities such as $h_+$ and $h_-$ defined above can lead to invariants associated to periodic points of certain types of subshifts. Another kind of invariant associated to periodic points are
``multiplier'' as in  \cite{hamachi_inoue_krieger}. These apply to periodic points of certain types of subshifts, including Dyck shifts. It seems plausible that using the technology of multipliers and the semi-group invariant introduced in \cite{MR1756982} it is possible to obtain more general  results about direct factorizations of Markov-Dyck shifts and more generally subshifts with Krieger's property $(A)$,  introduced in \cite{MR1756982}.
\end{remark}

\bibliographystyle{abbrv}
\bibliography{subshift_product}
\end{document}